\documentclass[reqno]{amsart}

\usepackage{lmodern}

\usepackage{float}
\usepackage[colorlinks=true,
            citecolor=blue,
            linkcolor=black,
            urlcolor=blue,
            linktocpage=true,
            hypertexnames=false]{hyperref}

\usepackage{caption}
\usepackage{ragged2e}
\usepackage{graphicx}
\usepackage{tikz}
\usetikzlibrary{patterns, decorations.markings}

\usepackage{enumerate}

\usepackage{amsthm}

\theoremstyle{plain}
\newtheorem{theorem}{Theorem}[section]
\renewcommand{\thetheorem}{%
	\ifnum\value{section}>0
	\thesection.%
	\else
	\thechapter.%
	\fi
	\arabic{theorem}%
}
\newtheorem{lemma}[theorem]{Lemma}
\newtheorem{proposition}[theorem]{Proposition}
\newtheorem{corollary}[theorem]{Corollary}

\newtheorem{remark}[theorem]{Remark}

\newtheorem*{remark*}{Remark}
\newtheorem*{question*}{Open question}
\newtheorem*{theorem*}{Theorem}
\newtheorem*{lemma*}{Lemma}
\newtheorem*{proposition*}{Proposition}
\newtheorem*{corollary*}{Corollary}
\newtheorem*{propriety*}{Property}
\newtheorem*{definition*}{Definition}
\newtheorem*{example*}{Example}
\newtheorem*{notation*}{Notation}

\usepackage{amsmath, amsfonts, amssymb, dsfont}

\newcommand{\A}{\mathcal{A}}

\newcommand{\E}{\mathbb{E}}

\renewcommand{\L}{\mathbb{L}}

\newcommand{\N}{\mathbb{N}}

\renewcommand{\P}{\mathbb{P}}

\newcommand{\R}{\mathbb{R}}

\newcommand{\Z}{\mathbb{Z}}

\renewcommand{\sin}[1]{\operatorname{sin}\left(#1\right)}
\renewcommand{\exp}[1]{\operatorname{exp}\left(#1\right)}

\renewcommand{\sin}[1]{\operatorname{sin}\left(#1\right)}
\renewcommand{\exp}[1]{\operatorname{exp}\left(#1\right)}

\newcommand{\Vol}[1]{\mathcal{L}_{d}\left(#1\right)}

\newcommand{\card}[1]{\left|\left\{#1\right\}\right|}

\makeatletter
\def\esp{\@ifnextchar[{\@withe}{\@withoute}}
\def\@withe[#1]#2{\E_{#1}\left[#2\right]}
\def\@withoute#1{\E\left[#1\right]}
\def\var{\@ifnextchar[{\@withv}{\@withoutv}}
\def\@withv[#1]#2{\mathbb{V}_{#1}\left(#2\right)}
\def\@withoutv#1{\operatorname{Var}\left(#1\right)}
\makeatother
\newcommand{\cov}[2]{\operatorname{cov}\left(#1, #2\right)}

\renewcommand{\sin}[1]{\operatorname{sin}\left(#1\right)}

\renewcommand{\exp}[1]{\operatorname{exp}\left(#1\right)}

\newcommand{\Id}{\operatorname{Id}}
\newcommand\restr[2]{{
		\left.\kern-\nulldelimiterspace 
		#1
		\littletaller
		\right|_{#2}
}}

\numberwithin{equation}{section}

\title[Stationary increment perturbation of point processes]{Regularization of a stationary point process by a stationary increments perturbation}
\author{Loïc Thomassey, Raphaël Lachièze-Rey, Assaf Shapira}

\begin{document}
	
	\begin{abstract}
		We present a novel procedure based on the result of \cite{Thomassey2025} where a stationary point process is regularized through the convolution with a continuous random field with stationary increments, in the sense that the dependency  between distant points is weakened, and the potential peaks in the spectrum (or Bragg peaks), reminiscent of a periodic behavior, are erased. We use this procedure to efficiently generate a hyperuniform point process in dimension $1$ using a fractional Brownian Motion, simulating $n$ points with complexity $n \log(n)$.
	\end{abstract}
	
	\maketitle
	
	\keywords{\textbf{Keywords:} Point process, Simulation, hyperuniformity, Palm measure, Gaussian process, spectral measure, stationary increments}
	
	\section{Introduction and motivations}
	
	\subsection{Perturbations of the lattice}\label{sec:introduction}
	
	A point process $\{\xi_{n} : n \in \Z\}$ is a locally finite random configuration of points in $\R^{d}$. We are interested in those configurations which are stationary, that is invariant with respect to translations, \textit{i.e.} 
	\begin{equation*}
		\xi \stackrel{(d)}{=}  \tau _{ t}\xi
	\end{equation*}
	for all $t\in \R^{d}$ where
	\begin{equation*}
		\tau_{t}\xi = \{\xi_{n}-t : n \in \Z\}.
	\end{equation*}
	An inexpensive and common procedure to generate  regular non-trivial stationary point processes is to start with the shifted lattice $\xi=\Z^{d}+U$ where $U$ is uniform on the unit square, and perturb each atom of the configuration with an independent $\R^{d}$-valued $ \mathbb{Z} ^{ d}$-stationary stochastic process $X=(X_{n})_{n\in \Z^{d}}$. This yields a perturbed point process
	\begin{equation*}
		\xi_{X}=\{n+U+  X_{ {n}} : n \in \Z\},
	\end{equation*}
	which is $ \mathbb{R}^{ d}$-stationary and inherits some of the spectral properties of the initial lattice. By the {spectral measure} of a stationary point process $ \xi $, we refer here to the Bartlett's spectrum of the process (see \cite{Daley2008}), that is the unique positive and locally finite measure $\mathcal S_{ \xi  }$ satisfying 
	\begin{equation}\label{eq:phase_equation}
		\var{\int_{\R^{d}} \varphi(t) \, \xi (\mathrm{d}t)} = \int_{\R^{d}}|\hat{\varphi}(t)|^{2}\,\mathcal{S}_{\xi}(\mathrm{d}t)
	\end{equation}
	for any smooth compactly supported function $f$. Here 
	\begin{equation*}
		\hat{\varphi}(t)=\int_{\R^{d}}e^{-i (x,t)}\varphi(x)\, \mathrm{d}x
	\end{equation*}
	and $\xi$ is identified with the discrete measure $\xi=\sum_{n\in \Z}\delta_{\xi_{n}}$.\medbreak 
    
    Deriving an explicit formula for the Bartlett's spectrum of a lattice perturbed by a dependent process can be a challenging task, but explicit formulas exist in a few particular cases. Thus, let us briefly assume that $(X_{n})_{n\in \Z^{d}}$ is a family of independent and identically distributed Gaussian perturbations. In that case, a quick computation shows that $\mathcal{S}_{\xi_{X}}$ is a mixture of a purely atomic measure and an absolutely continuous one as
	\begin{equation}\label{eq:bartletts_spectrum_perturbed_lattice}
		\mathcal S_{ \xi_{X}}(\mathrm{d}t)  =   (1- | \kappa(t)| ^{ 2})\mathrm{d}t  + | \kappa (t) | ^{ 2}  \mathcal S_{ \xi }(\mathrm{d}t),
	\end{equation}
	where $\kappa(t)=\esp{e^{i(t,X_{0})}}$ and
    \begin{equation*}
		\mathcal{S}_{\xi} = \sum_{n\in \Z\backslash\{0\}}\delta_{2\pi n}
	\end{equation*}
    is the Bartlett's spectrum of the shifted lattice $\xi=\Z+U$, see \cite{Coste2021}. In the preceding expression, the underlying lattice structure is apparent through the atomic part $| \kappa (t) | ^{ 2}  \mathcal S_{ \xi }(\mathrm{d}t)$.\medbreak
	
	Evaluating \eqref{eq:bartletts_spectrum_perturbed_lattice} at $   \mathsf B_{\varepsilon}=\{x \in \R^{d} : \|x\|\leq \varepsilon\}$  \   and letting $\varepsilon\to 0$ yield
	\begin{equation}\label{eq:hyperuniformity spectral}
		\lim_{\varepsilon\to 0}\varepsilon ^{ -d}\mathcal{S}_{\xi_{X}}(   \mathsf B_{\varepsilon}) = 0,
	\end{equation}
	or equivalently,  see \cite{Bjoerklund2023},
	\begin{equation}\label{eq:hyperuniformity}
		\lim_{r\to+\infty}\frac{\var{\xi_{X}(   \mathsf B_{r})}}{\Vol{   \mathsf B_{r}}} = 0,
	\end{equation}
    where $\Vol{\cdot}$ is the Lebesgue measure on $\R^{d}$.\medbreak
	
	For a Poisson process, the latter limit would be strictly positive, hence \eqref{eq:hyperuniformity} describes a cancellation phenomenon, which is known as  {\it hyperuniformity}. The study of hyperuniform point processes was initially pioneered by Torquato and al. in their seminal paper \cite{Torquato2003}, and it has gained traction as it was discovered that such processes arise naturally in many fields, ranging from physics, biology or mathematics, see \cite{Torquato2018} for a more detailed introduction or \cite{LachiezeRey2025a} for a recent mathematical survey. As such, perturbed lattices provide a simple procedure to generate non-trivial hyperuniform point processes.\medbreak
	
	In this article, we propose an alternative way of constructing non-trivial hyperuniform point processes from lattices while erasing the underlying lattice structure. The construction is  similar in spirit to the model of stationary perturbed lattice, except that it involves acting on the underlying Palm distribution instead of the shifted lattice. To avoid delving into too many technicalities in this introduction, we shall simply describe a Palm distribution as a point process $\hat{\xi}$ whose law is the distribution of a stationary point process $\xi$ conditioned on the event $(0\in \xi)$. From a theoretical point of view, Palm distributions completely characterize the law of the underlying stationary point process, since the application $\xi\longrightarrow\hat{\xi}$ is one-to-one. \medbreak
	
	The Palm distribution of the shifted lattice is the lattice itself, i.e. $\hat{\xi}=\Z^{d}$, and one may further shift all its points by a stochastic process with stationary increments $(B_{t})_{t\in \R^{d}}$, that is a $\R^{d}$-valued stochastic process satisfying 
	\begin{enumerate}
		\item $B_{0}=0$, 
		\item $B$ has stationary increments, i.e. for any $t\in \R^{d}$,
		\begin{equation*}
			\theta_{t}B \stackrel{(d)}{=} (B_{x+t}-B_{t})_{x\in \R^{d}}.
		\end{equation*}
	\end{enumerate} 	
		
	This results in a perturbed point process
	\begin{equation*}
			\hat{\xi}_{B}=\{n+B_{n} : n \in \Z^d\}
	\end{equation*}
    One of the findings of this article is the non-trivial fact that $ \hat \xi _{ B}$ is the Palm measure of a  stationary point process. Moreover, the preceding procedure can be generalized to Palm distributions others than the lattice $\Z^{d}$ in the sense that, if $\hat{\xi}$ is the Palm distribution of a stationary point process $\xi$, then 
	\begin{equation*}
		\hat{\xi}_{B}=\{x + B_{x} : x\in \hat{\xi}\}
	\end{equation*} 
	is the Palm distribution of a stationary point process, see Theorem \ref{thm:fBm peturbed lattice} below.\medbreak

	Yet, similarly to the stationary case, deriving an explicit formula for the Bartlett's spectrum of $\hat{\xi}_{B}$ is a difficult task and we shall therefore concentrate on a particular class of perturbations, known as fractional Brownian motions and their generalization to higher dimensions, fractional Brownian fields (fBf), see \cite{Mandelbrot1968}. By $d$-dimensional fBf, we mean the unique centered $\R^{d}$-valued Gaussian process $B=(B_{1}, \dots, B_{d})$ with stationary increments, independent coordinates and variogram
	\begin{equation}\label{eq:covariance_d_fbf}
		\Sigma_{t}= \cov{B_{t}^{(i)}}{B_{t}^{(j)}}_{1 \leq i,j \leq n}=\begin{pmatrix}
			\|t\|^{2h_{1}}&&(0)\\
			&\ddots&\\
			(0)&&\|t\|^{2h_{n}}
		\end{pmatrix}
	\end{equation}
	where $(h_{1}, \dots, h_{d}) \in (0,1)^{d}$ is known as the  {\it Hurst index} of $B$.\medbreak

	Finally, when $d=1$, we will say that $B$ is a fractional Brownian motion (fBm). For $h=1/2$, we recover a process with independent increments whose law coincides with the traditional distribution of a Brownian motion.
	
	\begin{theorem}\label{thm:fBm peturbed lattice}
		Let $\xi$ be a stationary point process with intensity $1$ and finite second moment, \textit{i.e.}
		\begin{equation*}
			\esp{\xi(K)^{2}}<+\infty, \quad K \text{ compact }, 
		\end{equation*}
		and $\hat{\xi}$ be its Palm distribution. If $B$ is a $d$-fBf independent of $\hat{\xi}$, then 
		\begin{enumerate}[(i)]
			\item $\hat{\xi}_{B} = \{x + B_{x} : x \in \hat{\xi}\}$ is the Palm distribution of an ergodic point process $\xi_{B}$ with intensity $1$,
			\item the Bartlett's spectrum of $\xi_{B}$ is absolutely continuous with respect to Lebesgue measure and its density (also known as \emph{structure factor}) is given by
			\begin{equation}\label{eq:structure_factor}
				s_{\xi_{B}}(t) = \esp{\int_{\R^{d}}e^{-\frac{1}{2}(\Sigma_{x}t,t)}e^{-i( t,x)} \, \hat{\xi}(\mathrm{d}x)}, \quad t\neq 0,
			\end{equation} 
			which converges absolutely on all compact subsets of $\R^{d}\backslash\{0\}$. Here, $\hat{\xi}$ is identified with a discrete measure.
		\end{enumerate}
	\end{theorem}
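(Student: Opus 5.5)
We will characterize Palm distributions through the point-shift / mass transport framework of \cite{Thomassey2025}. The underlying fact is that a point process $\hat\eta$ with $0\in\hat\eta$ a.s. is the Palm distribution of a stationary process of intensity $1$ iff it satisfies the mass transport principle: $\esp{\int g(0,x,\hat\eta)\,\hat\eta(\mathrm dx)}=\esp{\int g(x,0,\hat\eta)\,\hat\eta(\mathrm dx)}$ for all measurable $g\ge 0$ with $g(s+u,t+u,\tau_{u}\eta)=g(s,t,\eta)$. Equivalently, the law of $\hat\eta$ is invariant under the bijective point-shifts $\theta_{x}$ that move the origin to a point $x\in\hat\eta$.

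\textbf{Step 1: the MTP for $\hat\xi_{B}$.} We label the points of $\hat\xi_B$ by their preimages in $\hat\xi$. For a point $y\in\hat\xi$, re-rooting at $y+B_y$ yields the configuration
\begin{equation*}
\{x-y+(B_x-B_y): x\in\hat\xi\}=\{z+(\theta_{y}B)_{z}: z\in\tau_{y}\hat\xi\}.
\end{equation*}
Since $B$ is independent of $\hat\xi$ and has stationary increments, the process $\theta_yB$ has the law of $B$ for each fixed $y$. The MTP for $\hat\xi_B$ is obtained as follows:
\begin{itemize}
\item condition on $\hat\xi$;
\item rewrite the sum over $y\in\hat\xi$;
\item use the independence and stationarity above to replace $\theta_yB$ by $B$ inside the expectation, via Fubini;
\item apply the MTP for $\hat\xi$ to the resulting function of $(0,y,\hat\xi)$.
\end{itemize}
Injectivity of $x\mapsto x+B_x$ on $\hat\xi$ holds a.s. because $B$ is a.s. continuous and nondegenerate Gaussian on countably many pairs, so $\hat\xi_B$ is simple. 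Local finiteness uses $|B_x|=o(|x|)$ a.s., which holds since each $h_i<1$. The MTP then gives stationarity of the associated $\xi_B$ with intensity $1$. For ergodicity, we show that the shift-invariant $\sigma$-field is trivial. Invariant events of $\xi_B$ correspond to point-shift invariant events of $\hat\xi_B$. The increments of a fBf decorrelate at infinity: the covariance of $B_{x+u}-B_x$ with $B_{u'}$ tends to $0$ as $|x|\to\infty$ for $h<1$. Combining this mixing of $B$ with the Palm structure should give the conclusion. I expect this ergodicity step to be the most delicate, especially when $\xi$ itself is not ergodic, e.g.\ a lattice. There I would try to show that $B$ alone forces triviality, using a Gaussian 0--1 argument on tail-like invariant events.

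\textbf{Step 2: spectrum.} The reduced second moment measure of $\xi_B$ is the intensity measure of $\hat\xi_B$ minus $\delta_0$, interpreted in the distributional sense after subtracting Lebesgue measure. Its Fourier transform is therefore formally
\begin{equation*}
\esp{\sum_{x\in\hat\xi}e^{-i(t,x+B_x)}}=\esp{\int e^{-i(t,x)}e^{-\frac12(\Sigma_xt,t)}\,\hat\xi(\mathrm dx)},
\end{equation*}
using Gaussian characteristic functions, since $B$ is independent of $\hat\xi$. For $t\ne0$, the weight $e^{-\frac12(\Sigma_x t,t)}$ decays like $\exp(-c\|x\|^{2h_{\min}}\min_i t_i^2)$. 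Provided every coordinate $t_i\neq0$, this is uniform on compacts. Otherwise one uses the full quadratic form, which still decays when some $t_i\ne0$: indeed $(\Sigma_xt,t)\ge \sum_i t_i^2\|x\|^{2h_i}$. Since $\esp{\hat\xi(\mathsf B_r)}\le C r^d$ by the finite second moment (Campbell), the series converges absolutely, locally uniformly on $\R^d\setminus\{0\}$.

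To justify the identity with the Bartlett spectrum, we test \eqref{eq:phase_equation} with Schwartz $\varphi$ whose $\hat\varphi$ vanishes near $0$. By the refined Campbell theorem,
\begin{equation*}
\var{\int\varphi\,\mathrm d\xi_B}=\esp{\sum_{y\in\hat\xi_B}(\varphi*\tilde\varphi)(y)}-\Big|\int\varphi\Big|^2,
\end{equation*}
and we insert the Fourier inversion of $\varphi*\tilde\varphi$, exchanging sums and integrals by the absolute convergence above. This identifies $\mathcal S_{\xi_B}$ with $s_{\xi_B}\,\mathrm dt$ away from $0$. No atom at $0$ is possible: letting $\hat\varphi$ concentrate near the origin, the ergodicity established in Step 1 excludes one.
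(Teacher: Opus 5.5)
Your Step 1 mass-transport computation is the paper's Mecke-invariance argument, and your Step 2 Fourier computation follows the paper. Part (i), however, has two genuine gaps.

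\textbf{Ergodicity.} You leave this step open. Your fallback, that $B$ alone forces triviality when $\xi$ is not ergodic, cannot work, because the conclusion is then false.

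Take $\xi$ to be, with probability $1/2$ each, a Poisson process of intensity $1/2$ or $3/2$. Then $\hat\xi$ is $\delta_0$ plus a Poisson process of random intensity $\Lambda\in\{1/2,3/2\}$. Since $\|B_x\|=o(\|x\|)$, the limit $\lim_{r}\hat\xi_B(\mathsf{B}_r)/\Vol{\mathsf{B}_r}=\Lambda$ is a non-degenerate, translation-invariant functional. No Gaussian $0$--$1$ law can remove it. This is why the paper's proof explicitly works with $\xi$ ergodic.

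Your example is also misdiagnosed: the shifted lattice $\Z^d+U$ is ergodic, just not weakly mixing. The missing idea is the paper's three-step route:
\begin{enumerate}[(a)]
\item The fBf has an absolutely continuous, hence non-atomic, L\'evy measure, so it is weakly mixing (Theorem~\ref{thm:Maruyama_stationary_increments}).
\item An ergodic system times a weakly mixing one is ergodic \cite[Theorem 2.36]{Einsiedler2011}, so $(\xi,B)$ is jointly ergodic.
\item Joint ergodicity passes to $\xi_B$ by \cite[Theorem 3.2]{Thomassey2025}.
\end{enumerate}
The right input is weak mixing of $B$, not decorrelation or mere ergodicity. Weak mixing is exactly what makes the product with a non-weakly-mixing $\xi$, such as the lattice, ergodic.

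\textbf{Finiteness and intensity.} The mass transport principle characterizes Palm versions of stationary $\sigma$-finite measures. It gives neither a probability law for $\xi_B$ nor intensity $1$, so the ``underlying fact'' you state is wrong. For instance, $2\Z$ satisfies the MTP but is the Palm version of an intensity-$1/2$ process.

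What you need is $\esp{\Vol{V_0(\hat\xi_B)}}<\infty$. The intensity is then $1/\esp{\Vol{V_0(\hat\xi_B)}}$, and this still has to be computed. The paper proves the Voronoi bound from Gaussian tail estimates on $B$. It then identifies $\lambda_{\xi_B}=\lambda_\xi$ by comparing the almost-sure asymptotic densities of $\hat\xi_B$ and $\hat\xi$, using the ergodic theorem, shift-coupling and $\|B_x\|=o(\|x\|)$.

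Your exclusion of an atom at $0$ is a legitimate alternative to the paper's Gaussian test functions and Lemma~\ref{lem:technical_lemma_integrability}. The mass of $\mathcal{S}_{\xi_B}$ at $0$ is proportional to the variance of the limiting density, which vanishes for an ergodic process with finite second moments. But this argument inherits the ergodicity gap above.
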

	
	\begin{remark}

		Theorem \ref{thm:fBm peturbed lattice} is stated for a $d$-fBf. However, the conclusion of part \textit{(i)} continues to hold if $B$ is replaced by an ergodic Gaussian process with stationary increments. The latter assumption is equivalent to requiring that the Levy measure of $B$ is non-atomic (see Section \ref{sec:fbm}), a condition that is typically straightforward to verify in practice.\medbreak

		In contrast, the derivation of the structure factor and the cancellation of the atomic component of Bartlett’s spectrum rely crucially on the specific fluctuation properties of the $d$-fBf. Such behavior need not persist when $B$ is a general Gaussian process with stationary increments.\medbreak

		Therefore, determining a general expression for the structure factor of $\xi_B$ when the variogram of $B$ exhibits slow growth remains an open problem.
	\end{remark}

	Returning to the example of the lattice $\hat{\xi}=\Z^{d}$, $\xi_{B}$ is in that case a stationary point process with structure factor
	\begin{equation*}
		\mathcal{S}_{\xi_{B}}(t) = \sum_{n \in \Z^{d}}e^{-\frac{1}{2}(\Sigma_{n}t,t)} e^{-i(t,n)}.
	\end{equation*}
	If we compare this result with the one available for a  lattice perturbed by a stationary field, see equation \eqref{eq:bartletts_spectrum_perturbed_lattice}, one may point out a notable difference. Unlike $\mathcal{S}_{\xi_{X}}$, $\mathcal{S}_{\xi_{B}}$ no longer has any atomic component reminiscent of the lattice structure of $\Z^{d}$. The latter has been completely erased by the fluctuations of the fBf. We anticipate that the elimination of the periodic order goes beyond order $ 2$, and conjecture the resulting process to be mixing, but this is outside the scope of this article, see Section \ref{sec:mixing}. \medbreak
	
	A more delicate question is related to the hyperuniformity of $\xi_{B}$. As it has absolutely continuous Bartlett's spectrum, hyperuniformity occurs if and only if
	\begin{equation}\label{eq:hyperunifomrmity_structure_factor}
		\lim_{\|t\|\to 0}s_{\xi_{B}}(t)=0.
	\end{equation}
	Establishing the limiting behavior of $s_{\xi_{B}}$ is not simple  as it requires evaluating the asymptotic behavior of an oscillatory integral. In dimension $1$, these computations are difficult but one can obtain an asymptotic for the lattice :
	
	\begin{proposition}\label{prop:degree_hyperuniformity_palm_perturbed_lattice}
		Let $\hat{\xi}=\Z$ and $B$ a fBm with index $0<h<1$, \textit{i.e.} $\var{B_t}=|t|^{2h}$. Then, 
		\begin{equation}\label{eq:asymptotic_structure_factor}
			s_{\xi_{B}}(t) \underset{t\to 0}{\sim} \alpha_{h}|t|^{1-2h}
		\end{equation}
		with
		\begin{equation}\label{eq:asymptotic_structure_factor_constant}
			\alpha_{h}=2h\Gamma(2h)\sin{\pi h}.
		\end{equation}
		In particular, $\xi_{B}$ is hyperuniform if and only if $h<1/2$.
	\end{proposition}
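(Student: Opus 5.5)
From Theorem \ref{thm:fBm peturbed lattice}(ii) with $\hat{\xi}=\Z$, $d=1$ and $\Sigma_x=|x|^{2h}$, the structure factor is the deterministic series
\begin{equation*}
s_{\xi_B}(t)=\sum_{n\in\Z}e^{-\frac{t^2}{2}|n|^{2h}}e^{-itn}=1+2\sum_{n\geq 1}e^{-\frac{t^2}{2}n^{2h}}\cos(tn),\qquad t\neq 0.
\end{equation*}
The plan is to compare this sum with the integral $\int_{\R}e^{-\frac{t^2}{2}|x|^{2h}}e^{-itx}\,\mathrm{d}x$ via Poisson summation and extract the small-$t$ asymptotics from the continuous part.

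\textbf{Step 1 (Poisson summation).} Set $f_t(x)=e^{-\frac{t^2}{2}|x|^{2h}}$, which is integrable and smooth away from $0$. Poisson summation gives
\begin{equation*}
s_{\xi_B}(t)=\sum_{k\in\Z}\hat f_t(t+2\pi k).
\end{equation*}
To justify it despite the cusp of $f_t$ at $0$, we can sum by parts, or work with an Abel/Fejér regularisation and use that $\hat f_t(u)=O(|u|^{-1-2h})$ as $|u|\to\infty$ (standard for $|x|^{2h}$-type singularities).

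\textbf{Step 2 (scaling the main term).} Substituting $x=y\,t^{-1/h}$ gives
\begin{equation*}
\hat f_t(u)=t^{-1/h}\,g\!\left(u\,t^{-1/h}\right),\qquad g(v)=\int_{\R}e^{-\frac12|y|^{2h}}e^{-ivy}\,\mathrm{d}y.
\end{equation*}
The $k=0$ term is therefore $t^{-1/h}g(t^{1-1/h})$. Since $h<1$, the argument $v=t^{1-1/h}$ tends to infinity as $t\to 0$, so we need the large-$v$ asymptotics of $g$. Expanding $e^{-\frac12|y|^{2h}}=1-\frac12|y|^{2h}+\dots$ near $y=0$ and using the generalized Fourier transform $\int_{\R}|y|^{2h}e^{-ivy}\,\mathrm{d}y=-2\Gamma(2h+1)\sin(\pi h)|v|^{-1-2h}$ yields
\begin{equation*}
g(v)\sim\Gamma(2h+1)\sin(\pi h)\,|v|^{-1-2h}.
\end{equation*}
Substituting back,
\begin{equation*}
t^{-1/h}\,\Gamma(2h+1)\sin(\pi h)\,t^{-(1-1/h)(1+2h)}=2h\Gamma(2h)\sin(\pi h)\,t^{1-2h}.
\end{equation*}
The power works out as $-1/h-1-2h+1/h+2=1-2h$, and $\Gamma(2h+1)=2h\Gamma(2h)$, so this is exactly $\alpha_h|t|^{1-2h}$. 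The same asymptotics for $g$ can be obtained rigorously by rotating the contour, or by integrating by parts twice on $(0,\infty)$.

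\textbf{Step 3 (the aliased terms, main obstacle).} We must show that $\sum_{k\neq0}\hat f_t(t+2\pi k)=o(t^{1-2h})$. Each such term equals $t^{-1/h}g\big((t+2\pi k)t^{-1/h}\big)$, with argument of size $|k|t^{-1/h}\to\infty$. The leading asymptotics of $g$ then give
\begin{equation*}
t^{-1/h}\,\big(|k|t^{-1/h}\big)^{-1-2h}\asymp t^{2}\,|k|^{-1-2h},
\end{equation*}
which is summable in $k$, so the total is $O(t^2)=o(t^{1-2h})$ because $1-2h<2$. The delicate point is that this requires a \emph{uniform} bound $|g(v)|\leq C|v|^{-1-2h}$ for large $|v|$, with no slower oscillatory remainder. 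I would derive it from the exact representation
\begin{equation*}
g(v)=-v^{-1}\int_{\R}\tfrac{d}{dy}\Big(e^{-\frac12|y|^{2h}}\Big)\sin(vy)\,\mathrm{d}y,
\end{equation*}
splitting the integral at $|y|=1/v$: the local cusp part gives exactly the $|v|^{-1-2h}$ rate, and a second integration by parts on the smooth tail gives $O(v^{-2})$. If justifying Poisson summation proves awkward, an alternative is Euler--Maclaurin on $\sum_{n}e^{-\frac{t^2}{2}n^{2h}}\cos(tn)$ with the same scaling.

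\textbf{Conclusion.} Steps 2 and 3 together give $s_{\xi_B}(t)\sim\alpha_h|t|^{1-2h}$ as $t\to0$. Since $\alpha_h>0$ for $0<h<1$, the limit condition \eqref{eq:hyperunifomrmity_structure_factor} holds if and only if $1-2h>0$, that is, if and only if $h<1/2$.
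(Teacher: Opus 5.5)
Your proof is correct. It departs from the paper's mainly in how the aliased terms are controlled.

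Both arguments start from the same Poisson summation $s_{\xi_B}(t)=\sum_{k\in\Z}\hat f_t(t+2\pi k)$. Your $k=0$ term is exactly the paper's comparison integral $\bar s_{\xi_B}(t)$, and the paper also reads its asymptotics off the tail of the symmetric $2h$-stable density (citing Zolotarev). The paper then bounds $s_{\xi_B}-\bar s_{\xi_B}$ in three separate cases:
\begin{enumerate}[(a)]
\item For $h<1/2$, Bernstein's theorem writes $e^{-\frac12|x|^{2h}}$ as a mixture of $e^{-u|x|}$. The aliased terms become Cauchy kernels, summed via the $\coth$ identity, and truncated-moment and tail estimates on the mixing measure give $O(t^2)$.
\item For $h=1/2$ the sum is an explicit geometric series.
\item For $h>1/2$, a Gaussian mixture and a theta-sum-versus-integral comparison give only $O(1)$. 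This suffices because $t^{1-2h}\to\infty$.
\end{enumerate}
You instead feed the single estimate $g(v)=O(|v|^{-1-2h})$ into all aliased terms at once. This gives a uniform $O(t^2)$ error for every $h\in(0,1)$, with no case split and no complete-monotonicity argument. In exchange, the paper's mixture representations make the positivity of the aliased terms explicit, and they derive the error bounds from elementary identities rather than from the tail behaviour of $g$.

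One remark on Step 3: the uniformity you flag as delicate is automatic. For $0<t\le\pi$ and $k\neq0$ one has $|t+2\pi k|\,t^{-1/h}\ge\pi|k|\,t^{-1/h}$. So the equivalence $g(v)\sim\Gamma(2h+1)\sin{\pi h}\,|v|^{-1-2h}$ from Step 2 already bounds every aliased term by $C\,t^{2}|k|^{-1-2h}$ once $t$ is small.

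It is better to cite that classical stable-tail asymptotic, as the paper does, than to rely on your sketches, which are slightly off:
\begin{enumerate}[(a)]
\item With the split at $|y|=1/v$, the boundary term from integrating the tail piece by parts contributes to $g$ at the same order $v^{-1-2h}$. So the split does not isolate the leading term.
\item For $h>1/2$, an $O(v^{-2})$ remainder would actually dominate $v^{-1-2h}$. This is harmless, since it still makes the aliased sum $O(t^{1/h})=o(t^{1-2h})$.
\item A contour rotation by a full right angle is only legitimate for $h\le 1/2$. For $h>1/2$ the rotation angle must stay below $\pi/(4h)$.
\end{enumerate}
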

	Recall that for the limit case $ h = 1/2$, the fBm is a classical Brownian motion, it has independent increments. On the other hand, for $ h<1/2$, the increments are negatively correlated and negative correlation is typical of hyperuniformity. The exponent $1-2h$ that appears in the first term of the asymptotic expansion of $s_{\xi_{B}}$ is known as the degree of hyperuniformity of the process. The latter controls the growth of the number variance and for the perturbed Palm lattice, one has
	\begin{equation*}
		\frac{\var{\xi_{B}(\mathsf{B}_{r})}}{\Vol{\mathsf{B}_{r}}} \underset{r\to+\infty}{\sim}r^{2h},
	\end{equation*}
	up to some constant factor of proportionality, see \cite[Proposition 2.2]{LachiezeRey2025a}. In particular, the higher $h$ is, the more disordered the system is, until it is no longer hyperuniform for $h\geq 1/2$. In the next paragraph, we give a simple heuristic to derive the variance exponent before giving a formal proof in Section \ref{sec:proof_hyperuniformity}.
	
	\subsection{Qualitative derivation of hyperuniformity}
	\label{sec:heuristics-intro}
	
	Imagine for a brief moment that the function $f(x)=x+B_{x}$ is increasing.
	In this case, the number of points in $\Z_{B}\cap\mathsf{B}_{r}$
	is given by $x_{+}-x_{-}$, where 
	\begin{eqnarray}
		x_{-} & = & \min\left\{ n:f(n)>-r\right\} ,\\
		x_{+} & = & \max\left\{ n:f(n)<r\right\} .
	\end{eqnarray}
	The variable $x_{+}$ has fluctuations of the order $r^{h}$: for
	$a>0$,
	\begin{eqnarray}
		\P\left[x_{+}\in[r-ar^{h},r+ar^{h}]\right] & = & \P\left[f(r-ar^{h})<r,f(r+ar^{h})>r\right]\\
		& = & \P\left[B_{r-ar^{h}}<ar^{h},B_{r+ar^{h}}>-ar^{h}\right].
	\end{eqnarray}
	This probability is indeed small when $a$ is small and large when
	$a$ is large, uniformly in $r$. Similarly $x_{-}$ also fluctuates as $r^{h}$, so $\left|\Z_{B}\cap\mathsf{B}_{r}\right|$
	has variance of order $r^{2h}$ as predicted in Proposition \ref{prop:degree_hyperuniformity_palm_perturbed_lattice}. Finally, we note that although $f$ is not monotone, the probability
	$\P\left(f(x+C)<f(x)\right)$ remains very small for finite $C$ (that does not diverge with $r$). Therefore, the error introduced by assuming monotonicity is of order 1 and can be neglected.
    
	\subsection{Numerical simulations}
	
	The heuristic of the preceding section is backed with some numerical simulations. In dimension $1$, one of the main benefits of the perturbed Palm lattice lies in its computational efficiency as sampling $n$ points of a Gaussian process with stationary increments can be accomplished in $n\log(n)$ complexity, see \cite{Dietrich1997,Diecker2004}. On a personal note, the simulation of the Palm process $\Z_{B}$ with $n=2^{20}$ points in Figure \ref{fig:simulation} takes approximately $200$ milliseconds on the author's personal computer.
	
	\begin{center}
		\begin{figure}[H]
			\includegraphics[scale=0.28]{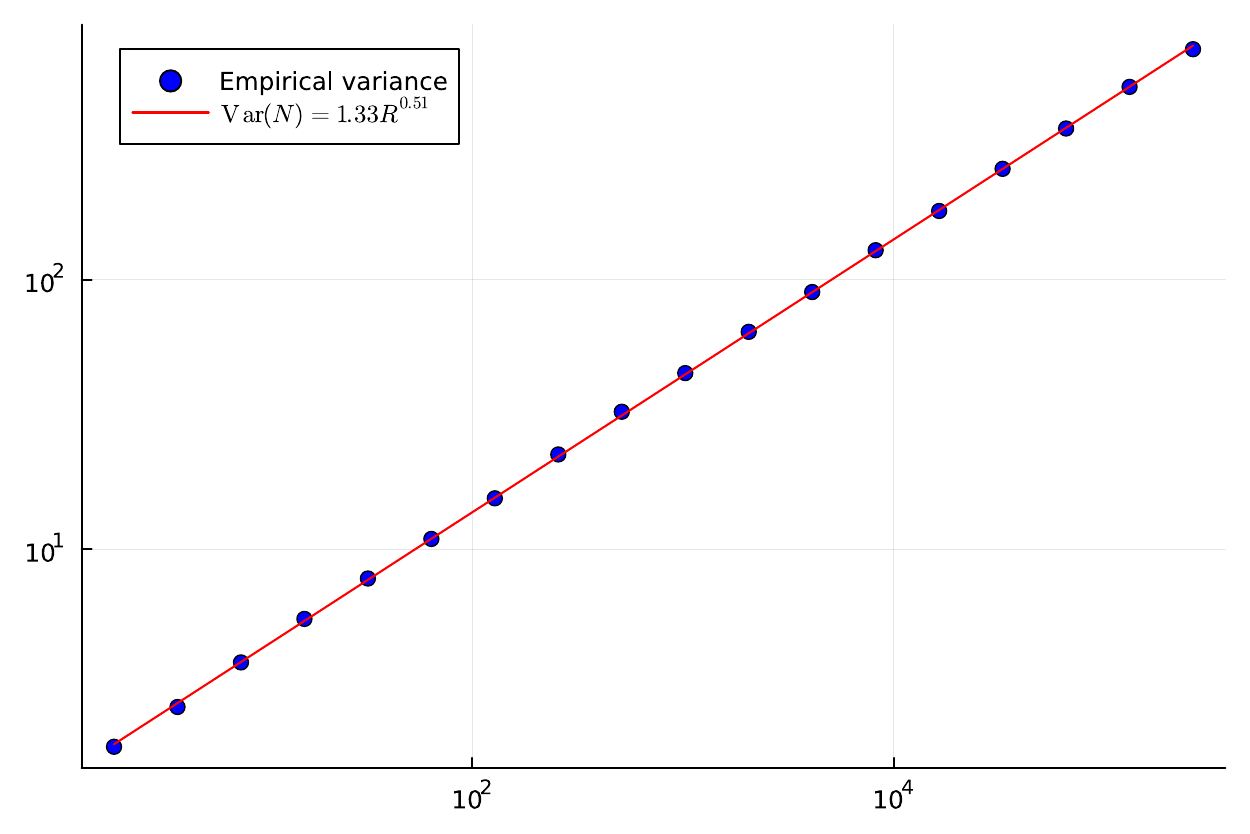}
			\includegraphics[scale=0.28]{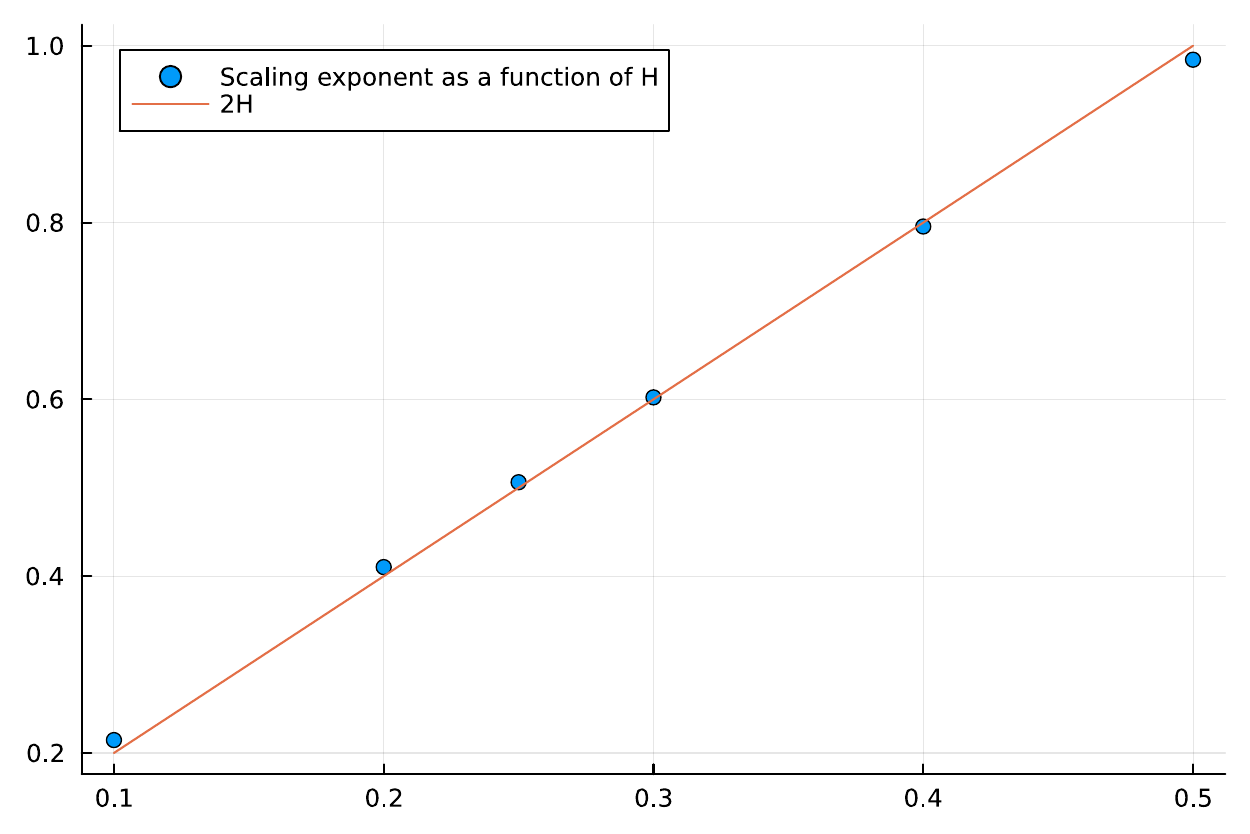}
			\caption{Log-regression of hyperuniformity exponent of $\xi_{B}$.}
		\end{figure}\label{fig:simulation}
		\tiny
		\justifying
		\textit{Description:} Hyperuniformity of the process $\xi_{B}$ where
		$B$ is a fractional Brownian motion. To the
		left we see in a log-log plot the variance of the number of particles
		in a ball of radius $r$ as a function of $r$, measured with $h=0.25$
		over $10000$ realizations of a system with $2^{20}$ points; linear
		regression yields the exponent $0.51$. Repeating this measurements
		for different values of $h$ provides the plot to the right; we compare
		the measured values to the theoretical exponent $1-2h$.
	\end{center}
	
	To simulate the underlying stationary point process rather than its Palm distribution, it is required to "dePalmize" $\hat{\xi}_{B}$. There are several algorithms which accomplish this task, and for theoretical purposes, we shall mention balanced allocations, see \cite[Chapters 10,11]{Last2017}. Yet, when it comes to numerical simulations, one should avoid relying on stable allocations as they are costly to compute, even though they allow for an exact simulation of $\xi_{B}$, . To circumvent this problem, we shall recall that $\xi_{B}$ is ergodic. In that case, it is well-known, see \cite[Proposition 12.5.VII]{Daley2008}, that 
	\begin{equation*}
		U_{x}+\hat{\xi}_{B} \underset{x\to +\infty}{\longrightarrow} \xi
	\end{equation*}
	in total variation, where $U_{x}$ is uniform on $[-x,x]$, independent of $\hat{\xi}_{B}$. Consequently, for $x$ \textit{large} enough, the law of $U_{x}+\hat{\xi}$ should provide a decent approximation of $\xi$. When $\xi_{B}$ is mixing  (see Section \ref{sec:mixing}), the preceding convergence can be strengthened to
	\begin{equation*}
		x+\hat{\xi}_{B} \underset{x\to +\infty}{\longrightarrow} \xi
	\end{equation*}
	the convergence holding in distribution; see \cite[Proposition 13.4.7]{Daley2008}.	

	\subsection{Open questions} 	
	In this section, we list some open question regarding perturbed Palm lattices.
	
	\subsubsection{Mixing properties}\label{sec:mixing}
	
	While it is known that $\xi_{B}$ is ergodic, little is known regarding its mixing properties. By mixing, we refer to a process which exhibits spatial asymptotic independence, in the sense that 
	\begin{equation*}
		\cov{f(\xi_{B})}{g(\tau_{t}\xi_{B})} \underset{\|t\|\to+\infty}{\longrightarrow} 0
	\end{equation*}
	for any measurable functions $f,g$. The absolutely continuous spectrum still gives some sort of asymptotic independence. When both $f(\xi_{B})=\xi_{B}(\varphi)$ and $g(\xi_{B})=\xi_{B}(\psi)$ are linear statistics,   equation \eqref{eq:phase_equation} yields
		\begin{equation*}
			\cov{\xi_{B}(\varphi)}{\tau_{t}\xi_{B}(\psi)} =\lambda_{\xi} \int_{\R^{d}}\overline{\hat{\varphi}(x)}{\hat{\psi}(x)} e^{i(t,x)}\, s_{\xi_B}(x)\, \mathrm{d}x
		\end{equation*}
		and the latter converges to $0$ as $\|t\|\to +\infty$ by the Riemann-Lebesgue lemma.   This is in contrast with lattices perturbed by  i.i.d variables (see introduction), for which even this weak mixing does not hold.
	\medbreak

	This question is not easily ascertainable for general functionals as, to the author's knowledge, there are no intrinsic criteria for mixing depending solely on the Palm distribution of the process. This question differs greatly from the question of ergodicity, since it can be proved that a stationary point process is ergodic if and only if its Palm distribution is, with respect to  some transformation known as point stationarity, see \cite[Chapter 13.4, Exercice 13.4.7]{Daley2008} and \cite{Thomassey2025}. Unfortunately, the equivalence does no longer holds when ``ergodic'' is replaced with ``mixing'' since the shifted lattice is not mixing, but its Palm distribution is. We still conjecture $ \xi _{ B}$ to be mixing since the underlying $d$-fBm is mixing (see Corollary \ref{cor:mixing_fBf}), hence exhibit long-range independence, and we expect this property to carry over.
	
	\subsubsection{Number-rigidity and deletion-tolerance} 
	
	A second open problem is related to the number-rigidity of $\xi_{B}$. Say that $\xi_{B}$ is number-rigid if the number of points of $\xi_{B}$ falling into a bounded non-empty open set $O\subset \R^{d}$ depends only on the outer configuration $\xi_{B}^{out} = \{x\in \xi_{B} : x\not\in O\}$. In dimension $1$, it is known that a large class of hyperuniform point processes exhibits some form of number-rigidity, known as linear rigidity, see \cite{Ghosh2016,Ghosh2017,Ghosh2021,LachiezeRey2025}. Since
		\begin{equation*}
			\int_{\mathsf{B}_{\varepsilon}(0)} \frac{1}{s_{\xi_{B}}(t)}\,\mathrm{d}t < +\infty,
		\end{equation*}
		it is known \cite[Formula (8)]{LachiezeRey2025} that  $\xi_{B}$ is not linearly number-rigid. Since linear-rigidity is the one observed in most cases, we expect $\xi_{B}$ to be non-rigid even in the hyperuniform case $h<1/2$. One way to disprove rigidity is to prove that $\xi_{B}$, or equivalently its Palm version $\hat{\xi}_{B}$, is deletion-tolerant (see \cite{Holroyd2013}), meaning that one cannot distinguish $\hat{\xi}_{B}$ from the process obtained by removing one of its atoms. We attempted this approach, but it was unsuccessful.
	
	\section{Generalities on point processes and fractional Brownian fields}\label{sec:stationarity_stochastc_increments}
	
	\subsection{Point processes}
	
	A random  point process $\xi=\{\xi_{n}: n\in\Z\}$ is a random variable in the space of locally finite random configurations of points in $\R^{d}$ equipped with the $\sigma$-algebra generated by the sets $\A_{K}=\{\xi \subset\R^{d} : \card{\xi\cap K}<+\infty \}$ for $K$ compact. Alternatively, identifying $\xi_{n}$ with an atom of a discrete measure, $\xi$ can be interpreted as a locally finite random measure, legitimating the use of the notation $\xi(\varphi)$ denote the linear statistics, \textit{i.e.}
	\begin{equation*}
		\xi(\varphi) = \sum_{n \in \Z}\varphi(\xi_{n}) = \int_{\R^{d}}\varphi(t)\, \xi(\mathrm{d}t).
	\end{equation*}
	
	As highlighted in the introduction, we are interested in those point processes which are stationary, that is invariant under the group of translations $(\tau_{t})_{t\in \R^{d}}$. Since a stationary point process induces a measure-preserving system, it is possible to study the ergodic properties of $\xi$ and we will say that  $\xi$ is said ergodic if
	\begin{equation*}
		\P(\xi \in A) \in \{0,1\}
	\end{equation*}
	for any invariant event $A$, \textit{i.e.} $\tau_{t}A=A$ for all $t\in \R^{d}$. Following this definition, it is not hard to check that the shifted lattice $\xi =\Z^{d}+U$ is ergodic.\medbreak
	
	Provided the intensity $\lambda_{\xi}=\esp{\xi([0,1]^{d})}$ is finite and non-zero, the distribution of $\xi$ is identified by its Palm distribution $\hat{\xi}=\{\hat{\xi}_{n} : n \in \Z\}$. The latter is a point process containing a.s. $0$ as an atom and is characterized by Campbell's formula 
	\begin{equation}\label{eq:campbell formula general}
		\esp{\int_{\R^{d}}f(\tau_{t}\xi,t) \, \xi(\mathrm{d}t)} = \lambda_{\xi}\esp{\int_{\R^{d}}f(\hat{\xi},t)\, \mathrm{d}t}.
	\end{equation}
	The latter allows to express the distribution of $\hat{\xi}$ in terms of $\xi$, namely
	\begin{equation}\label{eq:campbell formula}
		\esp{f(\hat{\xi})} = \frac{1}{\lambda_{\xi}}\esp{\int_{[0,1]^{d}}f(\tau_{t}\xi)\, \xi(\mathrm{d}t)}
	\end{equation}
	and vice versa
	\begin{equation}\label{eq:inverse_campbell formula}
		\esp{f(\xi)} = \lambda_{\xi}\esp{\int_{V_{0}(\hat{\xi})}f(\tau_{t}\hat{\xi})\, \mathrm{d}t}.
	\end{equation}
	Here $V_{0}(\hat{\xi}) = \{x \in \R^{d} : \forall n \in \Z, \|x\|^{2}\leq \|x-\hat{\xi}_{n}\|^{2}\}$ is the $0$-Voronoi cell.\medbreak

	When $\xi$ has a finite second moment measure, \textit{i.e.}
	\begin{equation*}
		\esp{\xi(K)^{2}}<+\infty, \quad K \text{ compact},
	\end{equation*}
	Campbell's formula \eqref{eq:campbell formula} allows to express the variance of a linear statistics $\varphi \in \mathcal{S}(\R^{d})$ as
	\begin{equation}\label{eq:variance of linear statistics}
		\var{\xi(\varphi)} = \lambda_{\xi}\esp{\int_{\R^{d}}\varphi \star \varphi(t)\, \mathcal{C}_{\xi}(\mathrm{d}t)}
	\end{equation}
	where 
	\begin{equation*}
		\varphi \star \varphi(t) = \int_{\R^{d}}\varphi(s)\varphi(s+t)\, \mathrm{d}s,
	\end{equation*}
	and
	\begin{equation}\label{eq:correlation measure}
		\mathcal{C}_{\xi}(\varphi) = \esp{\int_{\R^{d}}\varphi(t)\, \hat{\xi}(\mathrm{d}t)}-\lambda_{\xi}\int_{\R^{d}}\varphi(t)\, \mathrm{d}t
	\end{equation}
	is known as the correlation measure of the process. Since $\mathcal{C}_\xi$ is the difference of two possibly infinite measures, it is not necessarily a signed measure. Yet, it induces a tempered distribution whose Fourier transform is exactly the Bartlett's spectrum $\mathcal{S}_{\xi}$, see  \cite[Chapter 8]{Daley2008}. $\mathcal{S}_{\xi}$ is then necessarily a $\sigma$-finite positive measure, and when it has a density $s_{\xi}$ with respect to Lebesgue measure, the latter is called the  {\it structure factor} of the process. In particular, we recover
	\begin{equation}\label{eq:variance of linear statistics_furier}
		\var{\xi(\varphi)} = \lambda_{\xi}\esp{\int_{\R^{d}}|\hat{\varphi}(t)|^{2} \, \mathcal{S}_{\xi}(\mathrm{d}t)}
	\end{equation}
	where 
	\begin{equation*}
		\hat{\varphi}(t)=\int_{\R^{d}}\varphi(t)e^{-i( t,x)}\, \mathrm{d}x
	\end{equation*}
	
	\subsection{Fractional Brownian fields}\label{sec:fbm}
	
	In this section, we briefly recall some properties of Gaussian processes with stationary increments. In what follows, we always assume that $B$ is a centered $\R^{d}$-valued Gaussian process $B=(B^{(1)}, \dots, B^{(d)})$ with independent coordinates and stationary increments. Under these assumptions, the distribution of $B$ is determined by its variogram, which we shall always assume to be continuous,
	\begin{equation*}
		\Sigma_{t}=\var{B_{t}}=\operatorname{Diag}(v_{1}(t), \dots, v_{d}(t)), \quad t\in \R^{d},
	\end{equation*}
	where $v_{i}$ is the individual variogram of $B^{(i)}$. Similarly to Bochner's theorem which characterizes the continuous covariance function of stationary Gaussian process, there is a spectral characterization of continuous variograms, known as the Levy-Khintchine theorem, see \cite[Theorem 4.12]{Schilling2009}. The latter allows to rewrite	each individual variogram $v_{i}$ as 
	\begin{equation}\label{eq:levy_khintchine}
		v_{i}(t)=a_{i}t^{2}+\int_{\R^{d}}|1-e^{-i( t,x)}|^{2} \, \mu_{i}(\mathrm{d}x), \quad t\in \R^{d},
	\end{equation}
	for some drift parameter $a_{i}\geq 0$ and $\mu_{i}$ a positive symmetric Borel measure  satisfying
	\begin{equation*}
		\int_{\R^{d}} (1\wedge  \|x\|^{2})\, \mu_{i}(\mathrm{d}x) <+\infty.
	\end{equation*}
	$\mu_{i}$ is known as the Levy measure of the process.\medbreak

	In this article, we will only consider the $d$-fBf defined in the introduction, see Section \ref{sec:introduction}. In that case, $a_i=0$ and the Levy measures $\mu_{i}$ are absolutely continuous with respect to Lebesgue measure, with 
	\begin{equation*}
		\mu_{i}(\mathrm{d}t)=\frac{C_{i}\mathrm{d}t}{\|t\|^{d+2h}}.		
	\end{equation*} 
	for some constants of proportionality $C>0$.\medbreak
	
	This observation is important since the family of Levy measure $\mu=(\mu_{1}, \dots, \mu_{d})$ completely characterizes the ergodic properties of $B$. Before stating this result, let us recall some terminology. Say that $B$ is 
	\begin{enumerate}
		\item \textbf{ergodic} if
		\begin{equation*}
			\P(B\in A)\in\{0,1\}
		\end{equation*}
		for any $\theta_{t}$ invariant event $A$,  
		\item \textbf{weakly-mixing} if
		\begin{equation*}
			\frac{1}{\Vol{\mathsf{B}_{r}}}\int_{\mathsf{B}_{r}}\left|\P(B\in \theta_{t}A, B \in A')-\P(B\in A)\P(B\in A')\right|\, \mathrm{d}t \underset{r\to +\infty}{\longrightarrow} 0
		\end{equation*}
		for any pair of events $A,A'$.
		\item \textbf{(strongly)-mixing} if
		\begin{equation*}
			\P(B\in \theta_{t}A, B \in A') \underset{\|t\|\to+\infty}{\longrightarrow}\P(B\in A)\P(B\in A')
		\end{equation*}
		for any pair of events $A,A'$.
	\end{enumerate}  
	
	\begin{theorem}(Maruyama for Gaussian processes with stationary increments)\label{thm:Maruyama_stationary_increments}
		Let $B$ be a $\R^{d}$-valued Gaussian process with stationary increments and independent coordinates, with $a_{i}=0$ for $1\leq i \leq d$.
		\begin{itemize}
			\item $B$ is ergodic and weakly-mixing if and only if $\mu_{i}$ is non-atomic for any $1\leq i \leq d$,
			\item $B$ is mixing if and only if
			\begin{equation*}
				V_{a,b}(t) \underset{\|t\|\to +\infty}{\longrightarrow}  0, \quad a,b \in \R^{d},
			\end{equation*}
			where 
			\begin{align*}
				V_{a,b}(t)&=\cov{B_{-a}}{ B_{t+b}-B_{t}}\\
				&=\Sigma_{t+a}+\Sigma_{t+b}-\Sigma_{t+a+b}-\Sigma_{t}.
			\end{align*}
		\end{itemize}
		In particular, if $\mu_{i}$ is absolutely continuous for $1\leq i \leq d$, then $B$ is mixing.
	\end{theorem}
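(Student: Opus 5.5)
The plan is to reduce everything to the classical Maruyama theorem for stationary Gaussian systems. The reduction goes through the \emph{first Wiener chaos} $H$ of $B$, i.e. the closed linear span in $L^{2}(\P)$ of the coordinates $B^{(i)}_{s}$, $s\in\R^{d}$. Since $B_{0}=0$, the $\sigma$-algebra generated by $B$ is the same as the one generated by its increments $B_{s+b}-B_{s}$. The shifts $\theta_{t}$ act on these increments by $B_{s+b}-B_{s}\mapsto B_{s+t+b}-B_{s+t}$, and they extend to a unitary group $(U_{t})$ on $H$ because $B$ has stationary increments.

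The first step is a spectral representation of $B$ that makes $U_{t}$ explicit, built from the Levy--Khintchine formula \eqref{eq:levy_khintchine} with $a_{i}=0$. For each coordinate I would construct a complex Gaussian random measure $W_{i}$ with control measure $\mu_{i}$ and write
\begin{equation*}
B^{(i)}_{s}=\int_{\R^{d}}\left(e^{i(s,x)}-1\right)W_{i}(\mathrm{d}x).
\end{equation*}
This is legitimate since $\int|e^{i(s,x)}-1|^{2}\,\mu_{i}(\mathrm{d}x)=v_{i}(s)<\infty$. It identifies $H$ isometrically with the closed span in $\bigoplus_{i}L^{2}(\mu_{i})$ of the functions $e^{i(s,x)}-1$, and under this identification $U_{t}$ becomes multiplication by $e^{i(t,x)}$. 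The maximal spectral type of $(U_{t})$ on $H$ is then dominated by $\sum_{i}\mu_{i}$, restricted away from $0$. The independence of the coordinates is built into the direct sum.

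Next I would lift from $H$ to all of $L^{2}(\sigma(B))$ via the Wiener chaos (Fock space) decomposition. The $n$-th chaos carries the symmetric tensor power of $(U_{t})$, so its spectral type is a convolution power of that of $H$. Correlations of Wick products are polynomials in first-chaos covariances. Two consequences follow.
\begin{itemize}
\item If the spectral type on $H$ is non-atomic, its convolution powers are non-atomic as well. Hence $U$ has no eigenvectors on $L^{2}\ominus\mathbb{C}$, which gives weak mixing and therefore ergodicity.
\item If $\langle U_{t}f,g\rangle\to0$ for all $f,g\in H$, the same holds on every chaos. By density in $L^{2}$ this gives mixing.
\end{itemize}
For the converse in the first bullet of the theorem: if some $\mu_{i}$ has an atom at $x_{0}$, then necessarily $x_{0}\neq0$, since atoms at $0$ do not enter the integrand. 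The variable $Z=W_{i}(\{x_{0}\})$ is a non-degenerate complex Gaussian with $U_{t}Z=e^{i(t,x_{0})}Z$. So $|Z|$ is a non-constant invariant function and $B$ is not ergodic, hence not weakly mixing either.

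For the mixing criterion, it suffices to test $\langle U_{t}f,g\rangle\to0$ on a total set in $H$, because $U_{t}$ is unitary and a $3\varepsilon$-argument handles the approximation. As a total set I take the variables $B_{-a}$ and the increments $B_{t+b}-B_{t}$. Their covariance is exactly $V_{a,b}(t)$, and expanding $\Sigma$ via stationarity of increments recovers the stated expression in terms of $\Sigma$. Finally, for the ``in particular'' statement, the spectral representation gives
\begin{equation*}
V_{a,b}(t)=\int_{\R^{d}}\left(e^{-i(a,x)}-1\right)\overline{\left(e^{i(b,x)}-1\right)}\,e^{-i(t,x)}\,\mu(\mathrm{d}x)
\end{equation*}
coordinatewise. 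The integrand is bounded by $\min(4,\|a\|\|b\|\|x\|^{2})\le C_{a,b}(1\wedge\|x\|^{2})$, so it belongs to $L^{1}(\mu)$. When $\mu$ is absolutely continuous, the integrand times the density is an $L^{1}(\mathrm{d}x)$ function, and the Riemann--Lebesgue lemma gives $V_{a,b}(t)\to0$.

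I expect the main obstacle to be the first step. One has to set up the spectral representation rigorously for a measure $\mu_{i}$ that may be infinite near $0$, and check that $H$ really coincides with the span of the generators used, so that testing mixing on $B_{-a}$ and increments is enough.
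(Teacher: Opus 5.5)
Your proof is correct, but it takes a genuinely different route from the paper. The paper never constructs a spectral random measure or a chaos decomposition. Instead it tests ergodicity and mixing on the observables $\exp(i\sum_j a_jB_{t_j})$ and computes $\esp{f(B)\overline{g(\theta_tB)}}$ explicitly as a Gaussian expression. It then writes the cross term $\Sigma_{f,g}(t)$ as the Fourier transform of a finite signed measure $\gamma$ built from $\mu$, so that $e^{\Sigma_{f,g}}=\widehat{e^{\gamma}}$ in the convolution algebra of finite measures. Ergodicity and weak mixing are read off the atom $e^{\gamma}(\{0\})$ through a Ces\`aro lemma for Fourier--Stieltjes transforms, in the style of Rosinski--Zak. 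The $\R^{d}$-valued case is then obtained coordinatewise, using that products of weakly mixing systems are weakly mixing.

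Your Fock-space argument is the structural counterpart of that computation: $e^{\gamma}=\sum_n\gamma^{n}/n!$ is exactly the decomposition along the Wiener chaoses, with $\gamma^{n}/n!$ the cross-spectral measure of the $n$-th chaos components of $f$ and $g$. What your route buys:
\begin{itemize}
\item it explains why the criterion holds, since the Koopman representation is the second quantization of multiplication by $e^{i(t,x)}$ on $L^{2}(\mu)$;
\item it gives an explicit non-constant invariant function in the atomic case, instead of the inequality $e^{\gamma}(\{0\})>1$;
\item it treats the vector case in one stroke through the direct sum.
\end{itemize}
The paper's route needs only Gaussian characteristic functions and the measure algebra, and it is the one that adapts to infinitely divisible processes. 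The mixing criterion and the Riemann--Lebesgue step are essentially the same in both.

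You flag the wrong point as delicate. Totality of $\{B_s\}$ in $H$ holds by definition. What needs care is the converse: $|Z|^{2}$ must be a function of $B$, so you need $Z$ in the complexification $H_{\mathbb{C}}$ of $H$. It is not enough for $Z$ to lie in the Gaussian space of an auxiliary $W_i$. The cleanest fix is to define
\[
Z=\lim_{r\to\infty}\frac{1}{\Vol{\mathsf{B}_r}}\int_{\mathsf{B}_r}e^{-i(t,x_0)}U_tB^{(i)}_a\,\mathrm{d}t .
\]
This limit exists in $L^{2}$ by the mean ergodic theorem and lies in $H_{\mathbb{C}}$. It satisfies $U_tZ=e^{i(t,x_0)}Z$, and $\esp{|Z|^{2}}=|e^{i(a,x_0)}-1|^{2}\mu_i(\{x_0\})>0$ whenever $(a,x_0)\notin 2\pi\Z$.

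Finally, $\cov{B_{-a}}{B_{t+b}-B_t}$ equals $\tfrac12(\Sigma_{t+a}+\Sigma_{t+b}-\Sigma_{t+a+b}-\Sigma_t)$, not the expression without the factor $\tfrac12$. This is harmless for the limit.
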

	
	The standard version of Maruyama's theorem is usually stated for stationary Gaussian processes, see \cite{Maruyama1949}. Nonetheless, its proof can be easily adapted to Gaussian processes with stationary increments and we refer to Appendix \ref{ap:Gaussian_proces_stationary_increments} for a proof of this fact.
	
	\begin{corollary}\label{cor:mixing_fBf}
		The $d$-fBm is a mixing Gaussian process with stationary increments. 
	\end{corollary}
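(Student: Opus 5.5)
The plan is to apply the last part of Theorem \ref{thm:Maruyama_stationary_increments}: it suffices to check that each Levy measure $\mu_{i}$ of the $d$-fBf is absolutely continuous with respect to Lebesgue measure and that the drift parameters vanish. To make the corollary self-contained, we will also verify directly the criterion $V_{a,b}(t)\to 0$, since it only involves the explicit variogram $\Sigma_{t}=\operatorname{Diag}(\|t\|^{2h_{1}},\dots,\|t\|^{2h_{d}})$.

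\textbf{Step 1 (Levy measure).} For $0<h_{i}<1$ the scaling identity
\begin{equation*}
\int_{\R^{d}}|1-e^{-i(t,x)}|^{2}\,\frac{\mathrm{d}x}{\|x\|^{d+2h_{i}}} = C_{h_{i}}^{-1}\|t\|^{2h_{i}}
\end{equation*}
holds with a finite positive constant. The integral converges because the integrand behaves like $\|x\|^{2-d-2h_{i}}$ near $0$ (using $h_{i}<1$) and like $\|x\|^{-d-2h_{i}}$ at infinity (using $h_{i}>0$). It is homogeneous of degree $2h_{i}$ in $t$ by the substitution $x\mapsto x/\|t\|$, and it is rotation invariant. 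Hence $a_{i}=0$ and $\mu_{i}(\mathrm{d}x)=C_{h_{i}}\|x\|^{-d-2h_{i}}\,\mathrm{d}x$ in \eqref{eq:levy_khintchine}, which is absolutely continuous. The "in particular" clause of Theorem \ref{thm:Maruyama_stationary_increments} then gives mixing.

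\textbf{Step 2 (direct check, optional).} Coordinatewise, $V_{a,b}(t)$ is a second-order finite difference of $v(t)=\|t\|^{2h}$:
\begin{equation*}
\|t+a\|^{2h}+\|t+b\|^{2h}-\|t+a+b\|^{2h}-\|t\|^{2h}=-\int_{0}^{1}\!\!\int_{0}^{1}\bigl(\nabla^{2}v(t+sa+ub)\,a,b\bigr)\,\mathrm{d}s\,\mathrm{d}u .
\end{equation*}
For $\|t\|$ large the points $t+sa+ub$ stay away from $0$. There $\|\nabla^{2}v(y)\|\lesssim\|y\|^{2h-2}$, which tends to $0$ because $h<1$. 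Hence $V_{a,b}(t)=O(\|t\|^{2h-2})\to 0$.

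The only genuine point is the convergence and homogeneity of the integral in Step 1, which is where $h_{i}\in(0,1)$ is used. Everything else is immediate from Theorem \ref{thm:Maruyama_stationary_increments}.
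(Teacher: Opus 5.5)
Your proposal is correct and follows the paper's route. The paper observes that the $d$-fBf has $a_i=0$ and Levy measures $\mu_i(\mathrm{d}x)=C_i\|x\|^{-d-2h_i}\,\mathrm{d}x$, and reads the corollary off the absolute-continuity clause of Theorem \ref{thm:Maruyama_stationary_increments}, exactly as in your Step 1 (where you additionally verify convergence and homogeneity of the integral, implicitly using uniqueness of the Levy--Khintchine representation). Your optional Step 2, bounding $V_{a,b}(t)=O(\|t\|^{2h-2})$ via the Hessian of $\|t\|^{2h}$, is also correct and checks the mixing criterion directly, so it does not need that identification of the Levy measure.
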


	\section{Proof of Theorem \ref{thm:fBm peturbed lattice}}
	
	In this section, we consider $\xi$ a stationary and ergodic point process with finite second moment measure and intensity $\lambda_{\xi}$.  Recall that
	\begin{equation*}
		\hat{\xi}_{B}=\{x+B_{x} :  x \in \hat{\xi} \}.
	\end{equation*} 
	The goal of what follows is to prove Theorem \ref{thm:fBm peturbed lattice}, which asserts that 
	
	\begin{enumerate}[(1)]
		\item \label{item:thm 1} $  \hat \xi _{ B}$ is the Palm distribution of a stationary  simple point process $\xi_{B}$,
		\item \label{item:thm 2}$\xi_{B}$ is ergodic,
		\item \label{item:thm 3} $\xi_{B}$ has intensity $\lambda_{\xi_{B}}= \lambda_{\xi}$,
		\item \label{item:thm 4} the structure factor of $\xi_{B}$ is 
		\begin{equation*}
			s_{\xi_{B}}(t) = \esp{\int_{\R^{d}}e^{-\frac{1}{2}(\Sigma_{x}t,t)}e^{-i( t,x)} \, \hat{\xi}(\mathrm{d}x)}, \quad t\in \R^{d}.
		\end{equation*}
	\end{enumerate}
	
	We will prove each of these points successively.
	
	\subsection{Proof of (\ref{item:thm 1})}
	
	The stationarity of $\xi_B$ follows from the main result of \cite{Thomassey2025}, but for completeness we shall briefly reprove it hereafter. Recall that $\hat{\xi}_B$ is a Palm measure if and only if $0 \in \xi_B$ and Mecke's invariance principle for Palm measure \cite[Theorem 13.2.VIII]{Daley2008},
	\begin{equation}
		\esp{\int_{\R^{d}}\varphi(\tau_{t}\hat{\xi}_B, -t) \, \hat{\xi}_B(\mathrm{d}t)} = \esp{\int_{\R^{d}} \varphi(\hat{\xi}_B, t) \, \hat{\xi}_B(\mathrm{d}t)}.
		\label{eq:Mecke}
	\end{equation}
	is satisfied for any measurable positive function $\varphi$. Since $0\in \hat{\xi}$ and $B_{0}=0$, $0\in \hat{\xi}_{B}$. For the proof of \eqref{eq:Mecke}, note that
	\begin{align*}
		\esp{\int_{\R^{d}}\varphi(\tau_{t}\hat{\xi}_B, -t) \, \hat{\xi}_B(\mathrm{d}t)} = \esp{\int_{\R^{d}} \varphi(\tau_{t+B_t}\hat{\xi}_{B},-t-B_t) \, \hat{\xi}(\mathrm{d}t)}
	\end{align*}
	and 
	\begin{align*}
		\tau_{t+B_t}\hat{\xi}_B &= \{x+B_x-t-B_t : x \in \hat{\xi}\}\\
		&=\{x-t+B_{x+t-t}-B_t : x\in \hat{\xi}\}\\
		&=\{x+B_{x+t}-B_{t} : x\in \tau_{t}\hat{\xi}\}\\
		&=\{x+\theta_{t}B_{x} : x\in \tau_{t}\hat{\xi}\}\\
		&=(\tau_{t}\hat{\xi})\star (\theta_{t}B)
	\end{align*}
	where $\xi\star B = \{x+B_x : x\in \xi\}$. As a consequence, 
	\begin{align*}
		\esp{\int_{\R^{d}}\varphi(\tau_{t}\hat{\xi}_B, -t) \, \hat{\xi}_B(\mathrm{d}t)} = \esp{\int_{\R^{d}}\varphi((\tau_{t}\hat{\xi})\star(\theta_{t}B), -t-B_{t}) \, \hat{\xi}(\mathrm{d}t)}.
	\end{align*}
	For a fixed $t$, $\theta_{t}B$ and $B$ have same distribution as $B$ has stationary increments, henceforth the mutual independence of $B$ and $\hat{\xi}$ yields
	\begin{align*}
		\esp{\int_{\R^{d}}\varphi(\tau_{t}\hat{\xi}_B, -t) \, \hat{\xi}_B(\mathrm{d}t)} &= \esp{\int_{\R^{d}}\varphi((\tau_{t}\hat{\xi})\star B, -t-\theta_{-t}B_{t}) \, \hat{\xi}(\mathrm{d}t)}\\
		&=\esp{\int_{\R^{d}}\varphi((\tau_{t}\hat{\xi})\star B, -t+B_{-t}) \, \hat{\xi}(\mathrm{d}t)}.
	\end{align*}
	Finally, since $\hat{\xi}$ is a Palm measure, it satisfies Mecke's invariance principle \cite[Theorem 13.2.VIII]{Daley2008},
	\begin{equation*}
		\esp{\int_{\R^{d}}\psi(\tau_{t}\hat{\xi}, -t) \, \hat{\xi}(\mathrm{d}t)} =  \esp{\int_{\R^{d}}\psi(\hat{\xi}, t) \, \hat{\xi}(\mathrm{d}t)}.
	\end{equation*} 
	Conditioning on $B$ and applying this result to $\psi(\hat{\xi}, t) = \varphi(\hat{\xi}\star B, t+B_{t}) $ yields ultimately 
	\begin{align*}
		\esp{\int_{\R^{d}}\varphi(\tau_{t}\hat{\xi}_B, -t) \, \hat{\xi}_B(\mathrm{d}t)} &= \esp{\int_{\R^{d}}\varphi(\hat{\xi}\star B, t+B_{t}) \, \hat{\xi}(\mathrm{d}t)}\\
		&=\esp{\int_{\R^{d}}\varphi(\hat{\xi}_{B}, t) \, \hat{\xi}_{B}(\mathrm{d}t)}.
	\end{align*}
	Hence \eqref{eq:Mecke} is satisfied and it follows that $\hat{\xi}_{B}$ is the Palm measure of a stationary point process. The latter can be renormalised into a Palm distribution, \cite[Proposition 3.10]{Thomassey2025} if and only if
	 
\begin{align}\label{eq:voronoi_cell_bound}
		\esp{\Vol{V_{0}(\hat{\xi}_{B})}}<+\infty,
\end{align}
	where $V_{0}(\hat{\xi}_{B}) = \{t \in \R^{d} : \forall x \in \hat{\xi}, \|t-x-B_x\| \leq \|x\|\}$ denotes the $0$-Voronoi cell of $\hat{\xi}_{B}$. Condition \eqref{eq:voronoi_cell_bound} is mostly a technical assumption related to Palm inversion formula \eqref{eq:inverse_campbell formula}. If it is not satisfied, $\hat{\xi}_{B}$ is still "in a certain sense" a Palm distribution, but this Palm distribution is no longer associated with a stationary point process defined on a probability space, but rather with a stationary point process defined on an infinite measured space, see \cite{Daley2008}. In that case, we speak of a Palm measure.\medbreak
	
	We prove  \eqref{eq:voronoi_cell_bound} for $d=1$, and then briefly sketch how to adapt the argument in higher dimension. Fix $T>0$ large enough so that both $\xi \cap (0,T]$ and $\xi \cap [-T,0)$ are non-empty with high probability. If $|B_T| \in (0,T)$ and $|B_{-T}|\in (0,T)$, then both intervals $(-2T,0)$ and $(0,2T)$ contain at least one point of $\hat \xi_B$. In particular, 
	\begin{equation*}
		\Vol{V_0(\hat{\xi}_B)}<4T.
	\end{equation*}
	Since the probabilities $|B_{\pm T}| \in (0,T)$ decay as stretched exponential, $\Vol{V_0(\hat \xi_B)}$ must have finite expectation.\medbreak
	
	In higher dimension, observe that if for all $x\in \{-T,T\}^d$ we have $|B_x|<T$, then each quadrant of $\R^d$ contains a point of $\hat \xi_B$ at distance at most $2T$ from the origin. In this case, the volume of $V_0(\hat \xi_B)$  is bounded by $T^d$ which as before implies that is has finite expected value. This proves \eqref{eq:voronoi_cell_bound} and a fortiori \eqref{item:thm 1}.
	
	\subsection{Proof of (\ref{item:thm 2})}
	 	Recall that, as established in \cite[Theorem 3.2]{Thomassey2025}, $\xi_{B}$ is ergodic provided that $\xi$ and $B$ are jointly ergodic. Since $\xi$ is ergodic and $B$ has a non-atomic Levy measure, implying that $B$ is weakly mixing (Corollary \ref{cor:mixing_fBf}), the result follows directly from \cite[Theorem 2.36]{Einsiedler2011}.

	\subsection{Proof of (\ref{item:thm 3})}
	
		The goal of this section is to evaluate the intensity of $\xi_{B}$ and prove that $\lambda_{\xi_{B}} = \lambda_{\xi}$. There are multiple ways to derive this result, but here we use ergodicity and a shift-coupling argument due to Thorisson.

	\begin{lemma}
		Let $\xi$ be an ergodic point process with intensity $\lambda_{\xi}$, then 
		\begin{equation*}
			\lim_{r\to +\infty}\frac{\xi(\mathsf{B}_{r})}{\Vol{\mathsf{B}_{r}}}  = \lim_{r\to +\infty}\frac{\hat{\xi}(\mathsf{B}_{r})}{\Vol{\mathsf{B}_{r}}} = \lambda_{\xi} \quad a.s.
		\end{equation*}
	\end{lemma}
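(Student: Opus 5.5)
The plan has two steps: first prove the stationary statement with the spatial ergodic theorem, then transfer it to the Palm version through Campbell's formula \eqref{eq:campbell formula}, using the fact that an almost sure statement about $\xi$ becomes an almost sure statement about $\hat{\xi}$.

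\emph{Stationary part.} Tile $\R^{d}$ by the cubes $Q_{k}=k+[0,1)^{d}$, $k\in\Z^{d}$, and set $Y_{k}=\xi(Q_{k})=\xi(Q_{0})\circ\tau_{k}$. Since $\xi$ is stationary, $(Y_{k})_{k\in\Z^{d}}$ is a $\Z^{d}$-stationary field, and $\esp{Y_{0}}=\lambda_{\xi}<+\infty$. The multiparameter (Wiener) ergodic theorem along balls then gives
\[
\frac{1}{\Vol{\mathsf{B}_{r}}}\sum_{Q_{k}\subset \mathsf{B}_{r}}Y_{k}\longrightarrow \esp{Y_{0}\mid\mathcal{I}}\quad\text{a.s.},
\]
where $\mathcal{I}$ is the invariant $\sigma$-field. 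The cubes meeting $\mathsf{B}_{r}$ but not contained in it lie in the annulus $\mathsf{B}_{r+\sqrt d}\setminus\mathsf{B}_{r-\sqrt d}$. Sandwiching $\xi(\mathsf{B}_{r})$ between the sums over the cubes inside $\mathsf{B}_{r-\sqrt d}$ and inside $\mathsf{B}_{r+\sqrt d}$, and using $\Vol{\mathsf{B}_{r\pm\sqrt d}}/\Vol{\mathsf{B}_{r}}\to1$, shows that $\xi(\mathsf{B}_{r})/\Vol{\mathsf{B}_{r}}$ has the same a.s. limit.

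The delicate step is to identify this limit with $\lambda_{\xi}$. Ergodicity is assumed for the $\R^{d}$-action, not for the $\Z^{d}$-action. However, the limit $L=\lim_{r}\xi(\mathsf{B}_{r})/\Vol{\mathsf{B}_{r}}$ is $\tau_{t}$-invariant for every $t\in\R^{d}$: shifting by $t$ moves the ball by a bounded amount, and the sandwich argument above absorbs this. Hence $L$ is measurable with respect to the $\R^{d}$-invariant $\sigma$-field, so by ergodicity of $\xi$ it is a.s. constant. Its expectation is $\esp{Y_{0}}=\lambda_{\xi}$, using the $L^{1}$ convergence in the ergodic theorem. Therefore $L=\lambda_{\xi}$ a.s.

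\emph{Palm part.} Let $A$ be the event that $\xi(\mathsf{B}_{r})/\Vol{\mathsf{B}_{r}}\to\lambda_{\xi}$. This event is translation invariant, by the same bounded-shift argument, and $\P(\xi\in A)=1$ by the first part. Apply \eqref{eq:campbell formula} with $f=\mathds{1}_{A^{c}}$:
\[
\P(\hat{\xi}\notin A)=\frac{1}{\lambda_{\xi}}\esp{\int_{[0,1]^{d}}\mathds{1}_{A^{c}}(\tau_{t}\xi)\,\xi(\mathrm{d}t)}
=\frac{1}{\lambda_{\xi}}\esp{\mathds{1}_{A^{c}}(\xi)\,\xi([0,1]^{d})}=0,
\]
where the middle equality uses invariance of $A$, and the last one uses $\P(\xi\in A^{c})=0$. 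Thus $\hat{\xi}(\mathsf{B}_{r})/\Vol{\mathsf{B}_{r}}\to\lambda_{\xi}$ a.s.

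This transfer step is routine. The main obstacle is the identification of the limit: one must check carefully that the limit event and the limit variable are invariant under all real translations and not only under integer ones, so that ergodicity of the $\R^{d}$-action, rather than of the $\Z^{d}$-action, suffices.
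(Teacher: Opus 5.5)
Your proposal is correct and follows the paper's proof: the paper also takes the translation-invariant event $A$ and transfers $\P(\xi\in A)=1$ to $\P(\hat{\xi}\in A)=1$ through Campbell's formula \eqref{eq:campbell formula}, exactly as in your Palm step. The one difference is that you justify $\P(\xi\in A)=1$ via the ergodic theorem, including the check that the limit is invariant under all real shifts. The paper only asserts this step from ergodicity, which by itself gives $\P(\xi\in A)\in\{0,1\}$ and not the value $1$.
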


	\begin{proof}
		Let $\mathrm{N}(\R^{d})$ denote the set of locally finite measures on $\R^{d}$ and consider the event 
		\[	
			A = \left\{
				\psi \in \mathrm{N}(\R^{d}) : \frac{\psi(\mathsf{B}_{r})}{\Vol{\mathsf{B}_{r}}} \underset{r\to+\infty}{\longrightarrow} \lambda_{\xi}
		\right\}.
		\] 
		Since $A$ is invariant under translation and $\xi$ is ergodic, the event $\xi \in A$ has probability $1$. By Campbell's  formula \eqref{eq:campbell formula}, it follows that
		\begin{equation*}
			\P(\hat{\xi} \in A) = \frac{1}{\lambda_{\xi}}\esp{\int_{[0,1]^{d}}\underbrace{\mathds{1}(\theta_{t}\xi \in A)}_{=1} \, \xi(\mathrm{d}t)}
			=1.
		\end{equation*} 
		This finshes the proof.
	\end{proof}

	Returning to the proof of $(3)$, the preceding lemma implies that
	\begin{equation*}
		\lambda_{\xi_B}=\lim_{r\to +\infty}\frac{\xi_{B}(\mathsf{B}_{r})}{\Vol{\mathsf{B}_{r}}} \quad a.s.
	\end{equation*}
	since $\xi_B$ is ergodic. Moreover, the ergodicity of $\xi_{B}$ ensures that $\xi_{B}$ and $\hat{\xi}_{B}$ can be shift-coupled (see \cite[Theorem 5.32]{Kallenberg2017}), that is there exists a random variable $T$ in $\R^{d}$ such that
	\begin{equation*}
		\xi_{B} \stackrel{(d)}{=}\tau_{T}\hat{\xi}_{B}.
	\end{equation*}
	As a consequence,
	\begin{align*}
		\lambda_{\xi_B} &= \frac{\hat{\xi}_{B}(T+\mathsf{B}_{r})}{\Vol{\mathsf{B}_{r}}}\\
		&=\frac{\card{x \in \hat{\xi} : x+B_{x}-T \in \mathsf{B}_{r}}}{\Vol{\mathsf{B}_{r}}} \quad a.s.
	\end{align*}
	
	Fix $\varepsilon>0$. Since $\|B_{x}\|=o(\|x\|)$ \textit{a.s.}, there exists a random radius $r_{0}(\varepsilon)>0$ such that 
	\begin{equation*}
		\frac{\hat{\xi}(\mathsf{B}_{(1-\varepsilon)r})}{\Vol{\mathsf{B}_{r}}}\leq \frac{\xi_{B}(\mathsf{B}_{r})}{\Vol{\mathsf{B}_{r}}} \leq  \frac{\hat{\xi}(\mathsf{B}_{(1+\varepsilon)r)}}{\Vol{\mathsf{B}_{r}}}.
	\end{equation*}
	for any $r>r_{0}(\varepsilon)$. Taking the limits on both sides yields
	\begin{equation*}
		(1-\varepsilon)^{d} \leq \lambda_{\xi_{B}}\leq (1+\varepsilon)^{d}
	\end{equation*}
	and we let $\varepsilon\to 0$ to finish the proof. 
	
	\subsection{Proof of (\ref{item:thm 4})}
	
	In this section, we compute the Bartlett's spectrum of $\xi_{B}$. Recall that the latter is well-defined for point processes whose second moment measures are finite. Hence, we need to check first that 
	\begin{equation}\label{eq:second_momemnt_perturbed_palm_lattice}
		\esp{\xi_{B}(K)^{2}}<+\infty, \quad \text{ $K$ compact}.
	\end{equation}

	The proof of \eqref{eq:second_momemnt_perturbed_palm_lattice} relies on Campbell's formula \eqref{eq:campbell formula general} since
	\begin{align*}
		\esp{\xi_{B}(K)^{2}}&=\esp{\int_{\R^{d}}\mathds{1}_{K}\star\mathds{1}_{K}(t) \,\hat{\xi}_{B}(\mathrm{d}t)}.
	\end{align*}
	
	The convolution $\mathds{1}_{K}\star\mathds{1}_{K}$ is a continuous function supported on $L=K-K$ compact. Hence, it is bounded by some constant $C>0$ and it follows from the independence between $\hat{\xi}$ and $B$ that 
	\begin{equation}
		\esp{\xi_{B}(K)^{2}}\leq C \esp{\int_{\R^{d}}\P(x+B_{x} \in L)\, \hat{\xi}(\mathrm{d}x)}.
	\end{equation}
	Since $B$ is a $d$-fBm whose variance is given by
	\begin{equation*}
		\Sigma_{x}=\begin{pmatrix}
				\|x\|^{2h_{1}}&&(0)\\
				&\ddots&\\
				(0)&&\|x\|^{2h_{n}}
			\end{pmatrix}
	\end{equation*}
	with $0<h_{i}<1$, it is clear that $x \longmapsto\P(x+B_{x} \in L)$ is exponentially decreasing. Hence, according to the following lemma, 
	\begin{equation*}
		\esp{\xi_{B}(K)^{2}}<+\infty.
	\end{equation*}

	\begin{lemma}
		Let $\xi$ be a stationary point process with finite second moment and $f : \R^{d} \longrightarrow \R_{+}$ be a measurable mapping satisfying 
		\begin{equation*}
			f(t) \underset{\|t\|\to +\infty}{=}O\left(\|t\|^{-(d+1)}\right)
		\end{equation*}
		Then, 
		\begin{equation*}
			\esp{\int_{\R^{d}}f(t) \, \hat{\xi}(\mathrm{d}t)} < +\infty. 
		\end{equation*}
	\end{lemma}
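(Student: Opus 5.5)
The plan is to decompose $\R^{d}$ into unit cubes and bound the Palm expectation of $\hat{\xi}$ on each cube uniformly, using the second moment of $\xi$ via Campbell's formula. Write $Q_{k}=k+[0,1)^{d}$ for $k\in\Z^{d}$. Since $f=O(\|t\|^{-(d+1)})$ and $f$ is nonnegative, there are constants $C,R$ with $f(t)\leq C\|t\|^{-(d+1)}$ for $\|t\|\geq R$. On the bounded region $\|t\|<R$ the statement cannot hold without local integrability, so I will assume (as in the application, where $f(x)=\P(x+B_{x}\in L)\le 1$) that $f$ is bounded. Then
\begin{equation*}
	\esp{\int_{\R^{d}}f(t)\,\hat{\xi}(\mathrm{d}t)} \leq \sum_{k\in\Z^{d}} \Big(\sup_{Q_{k}} f\Big)\, \esp{\hat{\xi}(Q_{k})}.
\end{equation*}
The sequence $\sup_{Q_{k}}f$ is bounded by $C'(1+\|k\|)^{-(d+1)}$, which is summable over $\Z^{d}$. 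It therefore suffices to show that $\sup_{k}\esp{\hat{\xi}(Q_{k})}<+\infty$.

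For the uniform bound, I apply Campbell's formula \eqref{eq:campbell formula general} with $f(\psi,t)=\mathds{1}_{[0,1)^{d}}(t)\,\psi(Q_{k})$. This gives
\begin{equation*}
	\lambda_{\xi}\,\esp{\hat{\xi}(Q_{k})} = \esp{\int_{[0,1)^{d}}(\tau_{t}\xi)(Q_{k})\,\xi(\mathrm{d}t)}.
\end{equation*}
For $t\in[0,1)^{d}$, the set $Q_{k}+t$ lies in the cube $k+[0,2)^{d}$. The right-hand side is therefore at most $\esp{\xi([0,1)^{d})\,\xi(k+[0,2)^{d})}$. By Cauchy--Schwarz and stationarity, this is bounded by
\begin{equation*}
	\esp{\xi([0,1)^{d})^{2}}^{1/2}\,\esp{\xi([0,2)^{d})^{2}}^{1/2},
\end{equation*}
which is finite by the finite second moment assumption and does not depend on $k$.

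Combining the two steps gives the claim. The only delicate point is the uniform-in-$k$ bound. Cauchy--Schwarz combined with stationarity handles it, because translating the window $k+[0,2)^{d}$ does not change the second moment. The exponent $d+1$ is more than what is needed; any exponent strictly greater than $d$ suffices. I also note the implicit local boundedness of $f$ that the argument requires near the origin.
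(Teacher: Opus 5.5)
Your proof is correct and follows essentially the same route as the paper. Both arguments use Campbell's formula, a unit-cube decomposition with the summable weights $(1+\|k\|)^{-(d+1)}$, and Cauchy--Schwarz plus stationarity for a bound uniform in the cube. The only difference is order: you decompose on the Palm side first, whereas the paper applies Campbell to the whole integral first. Your remark that $f$ must be bounded near the origin is well taken. The paper uses it implicitly when bounding $f(t-x)\le C(1+\|t\|^{d+1})^{-1}$ for $x\in[0,1]^{d}$, and it holds in the application since $f(x)=\P(x+B_{x}\in L)\le 1$.
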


	\begin{proof}
		For $n\in \Z^{d}$, let $[n,n+1] = \prod_{i=1}^{d}[n_{i},n_{i}+1]$.\medbreak

		By Campbell's formula \eqref{eq:campbell formula},
		\begin{align*}
			\esp{\int_{\R^{d}}f(t) \, \hat{\xi}(\mathrm{d}t)} &= \frac{1}{\lambda_{\xi}}\esp{\int_{[0,1]^{d}}\int_{\R^{d}} f(t-x) \, \xi(\mathrm{d}t)\, \xi(\mathrm{d}x)}\\
			&\leq C\esp{\xi([0,1]^{d})\int_{\R^{d}}\frac{1}{1+\|t\|^{d+1}}\, \xi(\mathrm{d}t)}\\
			&=C\sum_{n \in \Z^{d}}\frac{1}{1+\|n\|^{d+1}}\esp{\xi([0,1]^{d})\xi([n,n+1]^{d})}\\
			\text{(Cauchy-Schwarz)}&\leq C\sum_{n \in \Z^{d}}\frac{1}{1+\|n\|^{d+1}}\esp{\xi([0,1]^{d})^{2}}\\
			&<+\infty,
		\end{align*}
		where $C$ is a positive constant allowed to change from line to line.
	\end{proof}

	Following the finiteness of the second moment measure, the correlation measure $\mathcal{C}_{\xi_{B}}$ of $\xi_{B}$ is well-defined, and following equation \eqref{eq:correlation measure}, its explicit formula is given by the tempered distribution 
	\begin{equation}\label{eq:palm_lattice_correlation}
		( \mathcal{C}_{\xi_{B}}, \varphi) = \esp{\int_{\R^{d}}\varphi(t)\, \hat{\xi}(\mathrm{d}t)} - \lambda_{\xi}\int_{\R^{d}}\varphi(t)\, \mathrm{d}t.
	\end{equation}
	
	Recalling that the Bartlett's spectrum is the Fourier transform of the covariance function, one has 
	
	\begin{equation}\label{eq:structure_factor_perturbed_palm_lattice}
		\begin{aligned}
			\mathcal{S}_{\xi_B}(\varphi) &= \esp{\int_{\R^{d}}\int_{\R^{d}}\varphi(t)e^{-i(x, t)-i(B_{x},t)} \, \mathrm{d}t \, \hat{\xi}(\mathrm{d}x)}-\lambda_{\xi}\int_{\R^{d}}\hat{\varphi}(t)\, \mathrm{d}t\\
			&=\esp{\int_{\R^{d}}\int_{\R^{d}}\varphi(t)e^{-i( x, t)-\frac{1}{2}( \Sigma_{x}t,t) }\, \mathrm{d}t\, \hat{\xi}(\mathrm{d}x)}-\lambda_{\xi}(2\pi)^{d}\varphi(0).
		\end{aligned}
	\end{equation}

	The preceding formula does not allow to see that $\mathcal{S}_{\xi_B}$ is absolutely continuous with respect to Lebesgue measure. In order to do so, we decompose $\mathcal{S}_{\xi_{B}}$ into an atomic and non-atomic part, namely
	\begin{equation*}
		\mu = \mathcal{S}_{\xi_{B}}(\{0\})
	\end{equation*}
	and 
	\begin{equation*}
		\mathcal{T}_{\xi_{B}} = \mathcal{S}_{\xi_{B}} - \mu \delta_{0}.
	\end{equation*}
		
	The decomposition $\mathcal{S}_{\xi_B}=	\mathcal{T}_{\xi_{B}}+\mu \delta_{0}$ is simply the Lebesgue decomposition of $\mathcal{S}_{\xi}$ with respect to to $\delta_{0}$ and we are left proving that
	\begin{enumerate}[(a)]
		\item\label{item:absolutely_continuous_part} $\mathcal{T}_{\xi_{B}}$ is absolutely continuous with respect to Lebesgue measure,
		\item \label{item:atom}$\mu=0$.
	\end{enumerate}
	
	First, we study the contribution of $\mathcal{T}_{\xi_{B}}$. Let $\varphi \in \mathcal{S}(\R^{d})$ be a Schwartz function supported on the complement of $\mathsf{B}_{\varepsilon}$. Equation \eqref{eq:structure_factor_perturbed_palm_lattice} yields
	
	\begin{align}\label{eq:absolutely_continuous_part}
		\mathcal{S}_{\xi_{B}}(\varphi) &=\mathcal{T}_{\xi_{B}}(\varphi)= \int_{\R^{d}}\esp{e^{-\frac{1}{2}( \Sigma_{x}t,t)-i ( t,x)} \, \hat{\xi}(\mathrm{d}x)} \varphi(t)\,\mathrm{d}t,
	\end{align}
	
	the inversion sum-integral being justified with Fubini since
		\begin{align*}
			\left|e^{-\frac{1}{2}( \Sigma_{n}t,t)-i( t,n)} \varphi(t)\right| &=e^{-\frac{1}{2}\sum_{i=1}^{d}h_{i}t_{i}^{2}}|\varphi(t)|\\
			& \leq e^{-\frac{1}{2}h_{\star}^{2}\varepsilon^{2}}|\varphi(t)|
		\end{align*}

		which is integrable. Here, $h_{\star}=\min\{h_{i}: 1 \leq i \leq d\}$.\medbreak

		As a consequence, $\mathcal{T}_{\xi_{B}}$ is absolutely continuous with respect to the Lebesgue measure and its density is given by 
		\begin{equation*}
			s_{\xi}(t)=\esp{e^{-\frac{1}{2}( \Sigma_{x}t,t)-i ( t,x)} \, \hat{\xi}(\mathrm{d}x)}.
		\end{equation*}
	
	For the contribution of the atomic part at $0$, consider the Gaussian density
	\begin{equation*}
		\varphi_{\sigma}(t) = e^{-\frac{1}{2}\sigma^{2}\|t\|^{2}}.
	\end{equation*}
	Since $\varphi_{\sigma}$ converges pointwise to $\mathds{1}_{\{0\}}$ as $\sigma \to +\infty$, the dominated convergence theorem yields
	\begin{equation}\label{eq:strcuture_factor_atomic_part}
		\mathcal{S}_{\xi_{B}}(\varphi_{\sigma}) \underset{\sigma \to +\infty}{\longrightarrow} \mu.
	\end{equation}
	
	On the other hand, equation \eqref{eq:structure_factor_perturbed_palm_lattice} yields
	\begin{equation*}
		\mathcal{S}_{\xi_{B}}(\varphi_{\sigma})
		=\esp{\int_{\mathbb{R}^{d}}\int_{\R^{d}} e^{-\frac{1}{2}(\sigma^{2}t+\Sigma_{x}t,t)-i( t, x)} \, \hat{\xi}(\mathrm{d}x)\, \mathrm{d}t} - \lambda_{\xi}(2\pi)^{d}.
	\end{equation*}
	The inner integral is, up to a proportionality constant, the characteristic function of a Gaussian random variable with variance $(\sigma^{2} \Id_{d}+\Sigma_{n})^{-1}$. Write 
	\begin{equation*}
		(\sigma^{2} \Id_{d}+\Sigma_{n})^{-1} = \frac{1}{\sigma^{2}}(\Id_{d}+\varepsilon_{\sigma}^{n}),
	\end{equation*} 
	where $\varepsilon_{\sigma}^{x}$ is a symmetric matrix satisfying $\displaystyle{\lim_{\sigma \to +\infty}\varepsilon_{\sigma}^{n}=0}$. Then,
	\begin{equation*}
		\mathcal{S}_{\xi_{B}}(\varphi_{\sigma})=\frac{(2\pi)^{d/2}}{\sigma^{d}}\esp{\int_{\R^{d}}\frac{e^{-\frac{1}{2\sigma^{2}}\left(\|x\|^{2}+(\varepsilon_{\sigma}^{x}x, x)\right)}}{\sqrt{\det(\Id_{d}+\varepsilon_{\sigma}^{x})}} \, \hat{\xi}(\mathrm{d}x)}- \lambda_{\xi}(2\pi)^{d}.
	\end{equation*}
	
	Letting $\sigma\to+\infty$ yields
	\begin{align*}
		\frac{(2\pi)^{d/2}}{\sigma^{d}}\esp{\int_{\R^{d}}\frac{e^{-\frac{1}{2\sigma^{2}}\left(\|x\|^{2}+(\varepsilon_{\sigma}^{x}x, x) \right)}}{\sqrt{\det(\Id_{d}+\varepsilon_{\sigma}^{x})}} \, \hat{\xi}(\mathrm{d}x)}&\underset{\sigma \to +\infty}{\sim} \frac{(2\pi)^{d/2}}{\sigma^{d}}\esp{\int_{\R^{d}} e^{-\frac{\|x\|^{2}}{2\sigma^{2}}}\, \hat{\xi}(\mathrm{d}x)}\\
		\text{(Lemma \ref{lem:technical_lemma_integrability})}&\underset{\sigma \to +\infty}{\sim} \lambda_{\xi}\int_{\R^{d}} e^{-\frac{\|x\|^{2}}{2}} \, \mathrm{d}x\\
		&=\lambda_{\xi}(2\pi)^{d},
	\end{align*}
	so that
	\begin{equation}\label{eq:strcuture_factor_atomic_part_1}
		\mathcal{S}_{\xi_{B}}(\varphi_{\sigma}) \underset{\sigma\to +\infty}{\longrightarrow} 0.
	\end{equation}
	It follows from \eqref{eq:strcuture_factor_atomic_part} and \eqref{eq:strcuture_factor_atomic_part_1} that $\mu=0$.\medbreak	
	
	To finish the proof, it remains to state and prove Lemma \ref{lem:technical_lemma_integrability}, the main ingredient of the preceding derivation.

	\begin{lemma}\label{lem:technical_lemma_integrability}
		Let $\xi$ be a stationary and ergodic point process with finite intensity $\lambda_{\xi}$. Then, 
		\begin{equation*}
			\lim_{\sigma \to +\infty}\frac{1}{\sigma^{d}}\esp{\int_{\R^{d}}f\left(\frac{x}{\sigma}\right) \, \hat{\xi}(\mathrm{d}x)} = \lambda_{\xi}\int_{\R^{d}} f(x)\, \mathrm{d}x.
		\end{equation*}
		for any continuous and integrable $f : \R^{d}\longrightarrow \R$.
	\end{lemma}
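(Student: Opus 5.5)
The plan is to reduce the statement to boxes, prove it for boxes with the ergodic theorem and uniform integrability, and then pass to general $f$ by a Riemann-sum approximation. Throughout, I work in the setting of this section, where $\xi$ has a finite second moment measure. This is not an extra convenience: without it $\esp{\hat{\xi}(K)}$ can already be infinite for compact $K$, by Campbell's formula \eqref{eq:campbell formula}.

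\emph{Step 1 (uniform linear bound).} Campbell's formula \eqref{eq:campbell formula}, stationarity and Cauchy--Schwarz, as in the proof of the preceding lemma, give for any bounded Borel set $A$
\begin{equation*}
\esp{\hat{\xi}(A)} \leq \frac{1}{\lambda_{\xi}}\esp{\xi([0,1]^{d})\,\xi(A+[-1,1]^{d})} \leq C\,\Vol{A+[-1,1]^{d}} .
\end{equation*}
Here $C$ depends only on $\esp{\xi([0,1]^{d})^{2}}$. In particular, every cube $Q$ of side $\sigma/M\geq 1$ satisfies $\esp{\hat{\xi}(Q)}\leq C' (\sigma/M)^{d}$, uniformly in its position.

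\emph{Step 2 (boxes).} Fix a box $Q$. By the lemma proved in Section 3 (ergodic case), $\sigma^{-d}\hat{\xi}(\sigma Q)\to\lambda_{\xi}\Vol{Q}$ a.s. To upgrade this to convergence of expectations, I will prove uniform integrability. Write
\begin{equation*}
\esp{\hat{\xi}(\sigma Q)}=\lambda_{\xi}^{-1}\esp{\int_{[0,1]^{d}}\xi(\sigma Q+t)\,\xi(\mathrm{d}t)} .
\end{equation*}
By the $L^{2}$ (von Neumann) ergodic theorem, $\sigma^{-d}\xi(\sigma Q+t)\to\lambda_{\xi}\Vol{Q}$ in $L^{2}$, uniformly in $t\in[0,1]^{d}$ up to boundary terms, since these are controlled by Step 1. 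Because $\xi([0,1]^{d})\in L^{2}$, Cauchy--Schwarz then gives
\begin{equation*}
\sigma^{-d}\esp{\hat{\xi}(\sigma Q)}\to\lambda_{\xi}^{-1}\,\esp{\xi([0,1]^{d})}\,\lambda_{\xi}\Vol{Q}=\lambda_{\xi}\Vol{Q} .
\end{equation*}
This step avoids Step 2's a.s. statement altogether, so ergodicity enters only through the identification of the $L^{2}$ limit as a constant.

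\emph{Step 3 (general $f$).} Tile $\R^{d}$ by cubes $Q_{k}$ of side $1/M$. On finitely many cubes inside a large ball $\mathsf{B}_{R}$, replace $f$ by step functions; Step 2 together with the uniform continuity of $f$ on $\mathsf{B}_{R}$ gives the limit up to an error $\omega_{f}(1/M)\sup_{\sigma}\sigma^{-d}\esp{\hat{\xi}(\sigma\mathsf{B}_{R})}$, which is finite by Step 1. For the tail, Step 1 yields
\begin{equation*}
\sigma^{-d}\esp{\int_{\|x\|>\sigma R}|f(x/\sigma)|\,\hat{\xi}(\mathrm{d}x)}\leq C'\sum_{Q_{k}\not\subset\mathsf{B}_{R}}M^{-d}\sup_{Q_{k}}|f| ,
\end{equation*}
valid for all $\sigma\geq M$. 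Then let $\sigma\to\infty$, then $R\to\infty$, then $M\to\infty$.

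\emph{Main obstacle.} The obstacle is this last tail estimate. It requires the upper Riemann sums $\sum_{k}M^{-d}\sup_{Q_{k}}|f|$ to be finite and to tend to $\int|f|$, i.e.\ direct Riemann integrability of $f$. That is strictly more than ``continuous and integrable'': a continuous integrable $f$ with thin tall spikes escaping to infinity breaks it. I would first try to get around this by using Step 1 at a finer scale, comparing $\hat{\xi}$ to $C\,\mathrm{d}x$ convolved with $\mathds{1}_{[-1,1]^{d}}$, i.e.\ bounding the tail by
\begin{equation*}
C\sigma^{-d}\int |f|\big((x+[-1,1]^{d})/\sigma\big)^{*}\,\mathrm{d}x ,
\end{equation*}
where $^{*}$ denotes the local supremum. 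This is the integral of a local supremum over windows of size $1/\sigma$, which tends to $\int|f|$ for uniformly continuous $f$ by dominated convergence once one integrable majorant is available. Failing that, I would record that the lemma is only used with Gaussians and Gaussian-type integrands, for which direct Riemann integrability is immediate, and state the lemma in that form.
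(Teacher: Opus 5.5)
Your argument is correct for the corrected statement you end up with, and it takes a different route from the paper.

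\textbf{Comparison with the paper.} The paper treats boxes through Thorisson's shift-coupling $\hat{\xi}\stackrel{(d)}{=}\xi-T$ and the pointwise ergodic theorem. It asserts that $\sigma^{-d}\xi(T+\sigma[a,b])$ converges a.s.\ and in $L^{1}$, and then passes to continuous integrable $f$ by ``approximating with step functions''. You instead use Campbell's formula to rewrite $\E[\hat{\xi}(\sigma Q)]$ as a second-moment quantity of the stationary process, and conclude with the mean ergodic theorem and Cauchy--Schwarz. You also add the uniform bound $\E[\hat{\xi}(A)]\leq C\,\Vol{A+[-1,1]^{d}}$ to control tails.

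The paper's route is shorter, but it leaves the hard part open. Given the a.s.\ convergence, $L^{1}$ convergence is equivalent (Scheff\'e) to the convergence of expectations being proved, and it does not follow from the ergodic theorem, which only uses first moments. Likewise, the step-function approximation needs a tail bound at infinity that is never given. Your route exposes the two hypotheses the lemma really needs:
\begin{enumerate}[(i)]
\item a finite second moment measure, which is available in Section 3;
\item direct Riemann integrability of $f$, which holds for the Gaussian $e^{-\|x\|^{2}/2}$, the only function to which the lemma is applied.
\end{enumerate}

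\textbf{Both hypotheses are necessary.} Your fallback formulation is therefore the right statement, not a concession.
\begin{itemize}
\item Take a Poisson cluster process whose cluster sizes satisfy $\E[N]<\infty=\E[N^{2}]$, with, say, uniform displacements in a unit ball. It is stationary, ergodic and of finite intensity, yet $\E[\hat{\xi}(K)]=+\infty$ for every neighbourhood $K$ of $0$.
\item Take the Palm lattice $\hat{\xi}=\Z$, and let $f\geq 0$ be a sum of tents of height $1/k$ and half-width $1/k$ centred at each integer $k\geq 1$. Then $f$ is Lipschitz and integrable, but $\sum_{n\in\Z}f(n/\sigma)\geq\sum_{k\geq 1}f(k)=+\infty$ for every integer $\sigma$.
\end{itemize}
In particular your ``first try'' cannot succeed: uniform continuity does not supply the integrable majorant $u\mapsto\sup_{u+[-1,1]^{d}}|f|$ that it relies on.

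\textbf{Two details to repair.}
\begin{itemize}
\item In Step 2, $t$ runs over the atoms of $\xi$, so Cauchy--Schwarz requires $L^{2}$ control of $\sup_{t\in[0,1]^{d}}|\sigma^{-d}\xi(\sigma Q+t)-\lambda_{\xi}\Vol{Q}|$, not control for each fixed $t$. Sandwich $\sigma Q+t$ between two fixed boxes whose sides differ from those of $\sigma Q$ by at most $1$. The boundary shells then have $L^{2}$ norm $O(\sigma^{d-1})$ by Minkowski's inequality and stationarity. This is not what Step 1 gives, since Step 1 bounds Palm expectations.
\item In Step 3 the order of limits is wrong as written. For fixed $M$, the inner error $\omega_{f}(1/M)\,O(R^{d})$ need not vanish as $R\to\infty$. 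Instead, bound the tail with unit cubes, which is valid for $\sigma\geq 1$ and independent of $M$. Then let $\sigma\to\infty$, then $M\to\infty$, and finally $R\to\infty$.
\end{itemize}
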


	\begin{proof}
		Let $f(x)=\mathds{1}(x \in [a,b])$ with $[a,b]=\prod_{i=1}^{d}[a_i,b_i]$. Then
		\begin{equation*}
			\frac{1}{\sigma^{d}}\int_{\R^{d}} f(x) \, \hat{\xi}(\mathrm{d}x) = \frac{1}{\sigma^{d}}\hat{\xi}([\sigma a, \sigma b])
		\end{equation*}
        Since $\xi$ is ergodic, Thorisson's shift-coupling lemma \cite[Theorem 5.32]{Kallenberg2017} yields the existence of a random variable $T$ for which 
        \begin{equation*}
            \hat{\xi}\stackrel{(d)}{=} \xi-T.
        \end{equation*}
        Consequently,
        \begin{equation*}
			\frac{1}{\sigma^{d}}\int_{\R^{d}} f(x) \, \hat{\xi}(\mathrm{d}x) = \frac{1}{\sigma^{d}}\xi([T+\sigma a, T+\sigma b])
		\end{equation*}
        and the latter converges by the ergodic theorem (see \cite[Theorem 12.2.IV]{Daley2008}) a.s. and in $\mathbb{L}^{1}(\P)$ to $\Vol{[a,b]}$. The conclusion of Lemma \ref{lem:technical_lemma_integrability} follows by approximating a continuous and integrable $f$ with step functions.
        
	\end{proof}

	\section{Proof of Proposition \ref{prop:degree_hyperuniformity_palm_perturbed_lattice}}\label{sec:proof_hyperuniformity}
	
	We now concentrate on the dimension $d=1$ and study the first-term asymptotic at $0$ of the structure factor of the perturbed Palm lattice where $\hat{\xi}=\Z$, and $B$ is a fBm with Hurst index $h \in (0,1)$. According to Theorem \ref{thm:fBm peturbed lattice}, $\xi_{B}$ is a stationary ergodic point process with intensity $1$ and whose structure factor is given by
	\begin{equation*}
		s_{\xi_B}(t) = \sum_{n\in \Z}e^{-\frac{1}{2}|n|^{2h}t^{2}}e^{-int}.
	\end{equation*}
	
	To derive the required asymptotic, one may follow the intuitive idea that the structure factor $s_{\xi_B}(t)$ is well-approximated by the integral
	\begin{equation*}
		\bar{s}_{\xi_{B}}(t) = \int_{\R}e^{-\frac{1}{2}t^{2}|x|^{2h}}e^{-itx}\,\mathrm{d}x.
	\end{equation*}
	
	The latter is, up to a change of variables, the Fourier transform of the function $x \longmapsto e^{-\frac{1}{2}|x|^{2h}} $. Equivalently, up to the multiplicative factor $1/\pi$, it coincides with the density of an $\alpha$-stable distribution. The asymptotic behavior of such densities is well understood, see for instance, 
	\cite[Theorem 2.5.1]{Zolotarev2001}. In particular, for $h \leq 1$ and as $t \to 0$,
	\begin{equation*}
		\bar{s}_{\xi_{B}}(t) \sim \alpha_h |t|^{1-2h},
	\end{equation*}
	where the constant $\alpha_h$ is given in \eqref{eq:asymptotic_structure_factor_constant} . For $h > 1$, the same asymptotic relation remains valid and can be established via the mean of Mellin transform, see \cite[Chapter 6]{Bleistein1986}.\medbreak

	Hence, it remains to prove that $\bar{s}_{\xi_{B}}$ is a good approximation of $s_{\xi_B}$. We distinguish three cases depending on whether $h=1/2$, $h<1/2$ or $h>1/2$. 
	
	\subsection{Brownian motion $h=1/2$}
	
	For $h=1/2$, $s_{\xi_B}$ is explicit and its asymptotic can be computed explicitly.
	
	\begin{align*}
		s_{\xi_B}(t) &= \sum_{n \in \mathbb{Z}} e^{int} e^{-\frac{1}{2}t^{2}|n|}\\
		&=1+2\Re\left(\sum_{n \geq 1}e^{itn}e^{-\frac{1}{2}t^{2}n }\right)\\
		&=1+2\Re\left(\frac{e^{it-\frac{1}{2}t^{2}}}{1-e^{it-\frac{1}{2}t^{2}}}\right)\\
		&=1+o(1).
	\end{align*}
	
	\subsection{Hyperuniform case $h<1/2$}
	
	For $h \neq 1/2$, some difficulties arise since $s_{\xi_B}(t)$ is an oscillating sum with long range cancellation . To circumvent this issue, we will use Poisson summation formulation, allowing us to rewrite the difference $s_{\xi_B}(t)-\bar{s}_{\xi_B}(t)$ in a convenient way.
	
	\begin{proposition}\label{prop:poisson_summation_hyperuniform}
		There exists a positive finite Borel measure $\mu$ supported on $\R_{+}$ such that 
		\begin{equation*}
			s_{\xi_B}(t)-\bar{s}_{\xi_B}(t) = 2 \int_{0}^{+\infty}\sum_{n \neq 0}\frac{t^{1/h}u}{t^{2/h}u+(2\pi n+t)^{2}}\, \mu(\mathrm{d}u), \quad t> 0.
		\end{equation*}
		Moreover, $\mu$ satisfies
		\begin{equation*}
			e^{-t^{2h}} = \int_{0}^{+\infty}e^{-st} \, \mu(\mathrm{d}u), \quad t\geq 0.
		\end{equation*}
	\end{proposition}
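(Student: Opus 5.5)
The plan is to reduce everything to the case of exponential weights $e^{-a|x|}$ via Bernstein's theorem, and then apply the Poisson summation formula to each such weight.

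\emph{Step 1: the measure $\mu$.} Since $0<2h<1$, the function $s\mapsto e^{-s^{2h}}$ is completely monotone on $\R_{+}$. It is the Laplace transform of a one-sided $2h$-stable law. By Bernstein's theorem there is therefore a positive Borel measure $\mu$ on $\R_{+}$ with $e^{-s^{2h}}=\int_{0}^{+\infty}e^{-us}\,\mu(\mathrm{d}u)$ for $s\geq 0$. Evaluating at $s=0$ gives $\mu(\R_{+})=1$, so $\mu$ is a finite (probability) measure. This proves the second claim of Proposition \ref{prop:poisson_summation_hyperuniform}.

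\emph{Step 2: writing the Gaussian factor as a mixture.} Fix $t>0$ and set $c_t=(t^{2}/2)^{1/(2h)}$, so that $\frac12 t^{2}|x|^{2h}=(c_t|x|)^{2h}$. Step 1 then gives
\begin{equation*}
e^{-\frac12 t^{2}|x|^{2h}}=\int_{0}^{+\infty}e^{-u c_t|x|}\,\mu(\mathrm{d}u).
\end{equation*}
For each fixed $a=uc_t>0$, the function $g_a(x)=e^{-a|x|}e^{-itx}$ is continuous, integrable and of bounded variation. Its Fourier transform is $\hat g_a(\omega)=2a/(a^{2}+(\omega+t)^{2})$, which is summable over $\omega\in 2\pi\Z$. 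Poisson summation therefore applies and gives
\begin{equation*}
\sum_{n\in\Z}e^{-a|n|}e^{-int}=\sum_{k\in\Z}\frac{2a}{a^{2}+(2\pi k+t)^{2}}.
\end{equation*}
The term $k=0$ is exactly $\int_{\R}e^{-a|x|}e^{-itx}\,\mathrm{d}x$. Hence, for each $u>0$,
\begin{equation*}
\sum_{n}e^{-uc_t|n|}e^{-int}-\int_{\R}e^{-uc_t|x|}e^{-itx}\,\mathrm{d}x=2\sum_{k\neq 0}\frac{uc_t}{(uc_t)^{2}+(2\pi k+t)^{2}}.
\end{equation*}
Integrating against $\mu(\mathrm{d}u)$ yields the stated representation, once $c_t\propto t^{1/h}$ is inserted. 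The normalising constant $2^{-1/(2h)}$ and the exact power of $u$ are absorbed by reparametrising $\mu$ through a pushforward under a power or scaling map, which keeps $\mu$ positive and finite. I will fix this normalisation so that it matches the displayed formula, and only afterwards rederive the Laplace-transform identity for the reparametrised measure.

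\emph{Step 3 (main obstacle): exchanging $\int\mu(\mathrm{d}u)$ with the sum over $n$ and with the $x$-integral.} The left-hand side is only conditionally convergent, and near $u=0$ the bound $\sum_n e^{-uc_t|n|}$ blows up, so Fubini cannot be applied directly. I would first insert a regulariser $e^{-\varepsilon|n|}$, respectively $e^{-\varepsilon|x|}$. With the regulariser everything converges absolutely and Tonelli/Fubini applies. On the right-hand side this only replaces $a$ by $a+\varepsilon$.

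I then let $\varepsilon\to0$ on both sides. On the left, $s_{\xi_B}(t)$ is an absolutely convergent sum, because $e^{-\frac12 t^{2}|n|^{2h}}$ decays as a stretched exponential. Likewise $\bar s_{\xi_B}(t)$ is an absolutely convergent integral, so dominated convergence applies. On the right, I need a bound on $\sum_{k\neq0}2a/(a^{2}+(2\pi k+t)^{2})$ that is uniform in $a>0$, for $t\in(0,\pi)$. For $a\leq 1$ the sum is at most $C a\sum_k k^{-2}$. For $a\geq1$ it is bounded by comparison with $\int 2a/(a^{2}+y^{2})\,\mathrm{d}y=2\pi$. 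Since $\mu$ is finite, dominated convergence in $u$ then finishes the argument. For general $t>0$ away from $2\pi\Z$ the same bound holds with a $t$-dependent constant.
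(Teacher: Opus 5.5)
Your proof is correct. It uses the same two ingredients as the paper, Bernstein's theorem and Poisson summation, but applies them in the opposite order.

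The paper applies Poisson summation to $x\mapsto e^{-\frac{1}{2}t^2|x|^{2h}}$ itself, writing $s_{\xi_B}(t)=2\pi\sum_n f_t(2\pi n+t)$. Only afterwards does it use the Bernstein representation, to evaluate the Fourier transform as a mixture of Lorentzians, $f_t(s)=\frac{1}{\pi}\int_0^{+\infty}\frac{t^{1/h}u}{u^2t^{2/h}+s^2}\,\mu(\mathrm{d}u)$. Its final exchange of $\sum_n$ and $\int\mu$ then involves only nonnegative terms.

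You decompose the weight into two-sided exponentials first. Poisson summation is then only needed for $e^{-a|x|}$, where it is the explicit identity $\sum_k\frac{2a}{a^2+(2\pi k+t)^2}=\frac{\sinh a}{\cosh a-\cos t}$. This spares you from checking that Poisson summation is legitimate for the stretched exponential, whose Fourier transform decays only like $|s|^{-1-2h}$ (the paper passes over this silently). The price is that you must exchange $\int\mu$ with an oscillating sum.

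That exchange is not the obstacle you make it, though. The blow-up $\sum_n e^{-uc_t|n|}\sim 2/(uc_t)$ as $u\to 0$ is $\mu$-integrable. By Tonelli, $\int_0^{+\infty}u^{-1}\,\mu(\mathrm{d}u)=\int_0^{+\infty}e^{-s^{2h}}\,\mathrm{d}s<+\infty$, and $\mu(\{0\})=\lim_{s\to+\infty}e^{-s^{2h}}=0$. Equivalently, $\sum_n\int_0^{+\infty}e^{-uc_t|n|}\,\mu(\mathrm{d}u)=\sum_n e^{-\frac{1}{2}t^2|n|^{2h}}<+\infty$. So the double series is absolutely convergent, likewise the $x$-integral, and Fubini applies directly. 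The same fact is what justifies the paper's own Fubini step when it computes $f_t$.

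Your $\varepsilon$-regularisation is therefore valid but unnecessary. It also leaves a case open: its bound uniform in $a$ fails for $t\in 2\pi\Z$, where one term equals $2/a$. The direct argument covers every $t>0$.

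On normalisation, the scaling pushforward by $2^{-1/(2h)}$ gives the Bernstein measure of $s\mapsto e^{-\frac{1}{2}s^{2h}}$ together with the denominator $u^2t^{2/h}+(2\pi n+t)^2$. This is exactly what the paper's proof derives; the displayed $t^{2/h}u$ and $e^{-t^{2h}}$ are typos. Consequently, your Step 1 identity is not, by itself, the Laplace identity for the measure in your final formula. That identity is $e^{-\frac{1}{2}s^{2h}}=\int_0^{+\infty}e^{-us}\,\mu(\mathrm{d}u)$. Moreover, no pushforward under a power map can turn $u^2$ into $u$ in the denominator; you would also need a reweighting by $u^{-1}$. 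So aim for the corrected statement, not the literal display.
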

	
	\begin{proof}
		Note that
		\begin{equation*}
			s_{\xi_B}(t) = \sum_{n\in \Z}\hat{f_{t}}(n)e^{itn}
		\end{equation*}
		with $\hat{f_{t}}(x)=e^{-\frac{1}{2}t^{2}|x|^{2h}}$. The application of Poisson summation formula and Fourier inverse theorem ensures that
		\begin{equation*}
			s_{\xi_B}(t)=2\pi\sum_{n \in \mathbb{Z}}f_{t}(2\pi n +t)
		\end{equation*}
		where 
		\begin{equation}\label{eq:poisson_formula}
			\begin{aligned}
				f_{t}(s) &= \frac{1}{2\pi}\int_{\R}e^{is\xi}e^{-\frac{1}{2}t^{2}|\xi|^{2h}} \, \mathrm{d}\xi\\
				&=\frac{1}{2\pi t^{1/h}}\int_{\R}e^{ist^{-1/h}\xi}e^{-\frac{1}{2}|\xi|^{2h}} \, \mathrm{d}\xi\\
				&=\frac{1}{\pi t^{1/h}}\Re\left(\int_{\R}e^{-ist^{-1/h}\xi}e^{-\frac{1}{2}\xi^{2}}\, \mathrm{d}\xi\right)
			\end{aligned}
		\end{equation}
		
		Now, for $h<1/2$, the function $\theta : t \to e^{-\frac{1}{2}t^{2h}}$ is right continuous, infinitely differentiable on $(0,+\infty)$ and completely monotone on $\R_{+}^{*}$ in the sense that 
		\begin{equation*}
			(-1)^{n}\theta^{(n)}(t)>0, \quad t>0, n \geq \N.
		\end{equation*}
		
		By Bernstein theorem \cite[Theorem 1.4]{Schilling2009}, it is then the Laplace transform of a positive finite measure $\mu$. Plugging this observation in \eqref{eq:poisson_formula} yields
		\begin{align*}
			f_{t}(s)&=\frac{1}{\pi t^{1/h}}\Re\left(\int_{0}^{+\infty}\int_{0}^{+\infty}e^{-(ist^{-1/h}+u)\xi} \, \mathrm{d}\xi \, \mu(\mathrm{d}u)\right)\\
			&=\frac{1}{\pi}\int_{0}^{+\infty}\frac{t^{1/h}u}{u^{2}t^{2/h}+s^{2}} \, \mu(\mathrm{d}u),
		\end{align*}
		which finally implies that
		\begin{equation*}
			s_{\xi_B}(t) = 2\int_{0}^{+\infty}\sum_{n \in \mathbb{Z}} \frac{t^{1/h}u}{u^{2}t^{2/h}+(2\pi n+t)^{2}} \,\mu(\mathrm{d}u).
		\end{equation*}
		
		A similar reasoning also gives
		\begin{equation*}
			\bar{s}_{\xi_B}(t)=2\int_{0}^{+\infty}\frac{t^{1/h}u}{u^{2}t^{2/h}+t^{2}} \, \mu(\mathrm{d}u).
		\end{equation*}
	\end{proof}
	
	An almost immediate consequence of Proposition \ref{prop:poisson_summation_hyperuniform} is the following bound on the difference between $s_{\xi_B}(t)$ and $\bar{s}_{\xi_B}(t)$. 
	
	\begin{lemma}
		If $0 <t <\pi$, then 
		\begin{equation*}
			s_{\xi_B}(t)-\bar{s}_{\xi_B}(t) \leq 2\Lambda(t^{1/h})
		\end{equation*}
		where 
		\begin{equation*}
			\Lambda(t) = \int_{0}^{+\infty} \coth(tu)-\frac{1}{tu} \, \mu(\mathrm{d}u)
		\end{equation*}
	\end{lemma}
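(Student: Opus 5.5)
The plan is to start from the representation of Proposition \ref{prop:poisson_summation_hyperuniform}, bound each term of the lattice sum by a term of an explicitly summable series, and recognise the resulting series as the partial-fraction expansion of $\coth$. As written in the proof of that proposition, the summand is
\begin{equation*}
	\frac{a}{a^{2}+(2\pi n+t)^{2}}, \qquad a=a(u,t)=t^{1/h}u .
\end{equation*}
The statement of the proposition shows $t^{2/h}u$ in the denominator, but the computation in its proof gives $t^{2/h}u^{2}$, and I will use the latter.

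\textbf{Step 1: comparison for $n\neq 0$.} Since $0<t<\pi$, for every integer $n\neq 0$ we have
\begin{equation*}
	|2\pi n+t|\geq 2\pi|n|-t\geq 2\pi|n|-\pi\geq \pi|n|.
\end{equation*}
Every summand is positive, so for all $u\geq 0$
\begin{equation*}
	\sum_{n\neq 0}\frac{a}{a^{2}+(2\pi n+t)^{2}}\leq \sum_{n\neq 0}\frac{a}{a^{2}+\pi^{2}n^{2}} .
\end{equation*}

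\textbf{Step 2: summing the series.} I will use the classical Mittag-Leffler expansion
\begin{equation*}
	\sum_{n\in\Z}\frac{1}{n^{2}+b^{2}}=\frac{\pi}{b}\coth(\pi b).
\end{equation*}
Taking $b=a/\pi$ gives
\begin{equation*}
	\sum_{n\in\Z}\frac{a}{a^{2}+\pi^{2}n^{2}}=\coth(a).
\end{equation*}
Removing the $n=0$ term, which equals $1/a$, yields
\begin{equation*}
	\sum_{n\neq 0}\frac{a}{a^{2}+\pi^{2}n^{2}}=\coth(a)-\frac{1}{a}.
\end{equation*}
With $a=t^{1/h}u$, this is exactly the integrand of $\Lambda(t^{1/h})$ evaluated at $u$. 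The value $u=0$ is harmless: both sides vanish there, interpreting $\coth(a)-1/a\to 0$ as $a\to 0$.

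\textbf{Step 3: integrating against $\mu$.} Since $\mu$ is a positive measure and all integrands are nonnegative, Tonelli's theorem lets me integrate the inequality of Step 1 termwise, and the identity of Step 2 then gives
\begin{equation*}
	s_{\xi_B}(t)-\bar{s}_{\xi_B}(t)\leq 2\int_{0}^{+\infty}\left(\coth(t^{1/h}u)-\frac{1}{t^{1/h}u}\right)\mu(\mathrm{d}u)=2\Lambda(t^{1/h}).
\end{equation*}
Note that $0\leq \coth(a)-1/a\leq 1$ and $\mu$ is finite, so $\Lambda$ is finite and the bound is meaningful.

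The only real obstacle is making sure the representation from Proposition \ref{prop:poisson_summation_hyperuniform} is used with the correct denominator $u^{2}t^{2/h}$, which the computation in its proof supports. After that, the argument reduces to the elementary estimate $|2\pi n+t|\geq \pi|n|$ and the cotangent-type series identity.
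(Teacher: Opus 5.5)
Your proof is correct and follows the same route as the paper: the termwise comparison $|2\pi n+t|\geq \pi|n|$ for $n\neq 0$ and $0<t<\pi$, then the partial-fraction expansion of $\coth$ integrated against $\mu$. Your form of the identity, $\sum_{n\neq 0} a/(a^{2}+\pi^{2}n^{2})=\coth(a)-1/a$, is the correct one; the paper's displayed version drops the factor $a$ in the numerator. You are also right to use the denominator $u^{2}t^{2/h}$ that the proof of Proposition~\ref{prop:poisson_summation_hyperuniform} actually produces, rather than the $t^{2/h}u$ in its statement.
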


	\begin{proof}
		Notice that 
		\begin{equation*}
			\frac{1}{t^{2/u}u^{2}+(2\pi n+t)^{2}} \leq \frac{1}{t^{2/h}u^{2}+\pi^{2}n^{2}}
		\end{equation*}
		as long as $|t|\leq \pi$ and $n\neq 0$. The conclusion of the proofs follows then from the fact that 
		\begin{equation*}
			\sum_{n \neq 0}\frac{1}{x^{2}+\pi^{2} n^{2}} = \coth(x)-\frac{1}{x}.
		\end{equation*}
	\end{proof}
	
	A straightforward computation shows that as $x \to 0$, 
	\begin{equation*}
		\coth(x)-\frac{1}{x} = \frac{x}{3}+o(x)
	\end{equation*}
	and one might expect by the dominated convergence theorem that 
	\begin{equation*}
		\Lambda(t) \sim \frac{t}{3}\int_{0}^{+\infty}u \, \mu(\mathrm{d}u),
	\end{equation*}
	so that
	\begin{equation*}
		s_{\xi_B}(t)-\bar{s}_{\xi_B}(t)=O(t^{1/h}).
	\end{equation*}
	Unfortunately, $\mu$ does not have a moment of order $1$ so the preceding computation fails. Yet, it is still possible to circumvent the issue and get: 
	
	\begin{lemma}\label{lem:lambda_asympotics_hyperuniform}
		As $t\to 0$,
		\begin{equation*}
			\Lambda(t) = O(t^{2h}).
		\end{equation*}
	\end{lemma}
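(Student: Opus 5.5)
We need $\Lambda(t)=\int_0^\infty g(tu)\,\mu(\mathrm{d}u)$ with $g(x)=\coth(x)-1/x$. The plan rests on two elementary properties of $g$ and on the tail of $\mu$.

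The function $g$ is nonnegative and increasing, with $g(x)\le x/3$ for all $x\ge 0$ and $g(x)\le 1$. Hence $g(x)\le \min(x/3,1)$.

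The tail of $\mu$ is read off from its Laplace transform $\int_0^\infty e^{-su}\,\mu(\mathrm{d}u)=e^{-s^{2h}}$ (up to the harmless factor $1/2$ in the exponent) via a standard Tauberian-type estimate. For $s>0$,
\begin{equation*}
\mu\big((1/s,+\infty)\big)\le \frac{1}{1-e^{-1}}\int_0^\infty (1-e^{-su})\,\mu(\mathrm{d}u)=\frac{1-e^{-s^{2h}}}{1-e^{-1}}\le C s^{2h},
\end{equation*}
since $1-e^{-su}\ge 1-e^{-1}$ on $\{u>1/s\}$ and $\mu$ has total mass $\theta(0)=1$. Thus $\bar\mu(v):=\mu((v,\infty))\le C v^{-2h}$.

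With these two inputs, I split $\Lambda(t)$ at $u=1/t$:
\begin{equation*}
\Lambda(t)\le \frac{t}{3}\int_{[0,1/t]} u\,\mu(\mathrm{d}u)+\mu\big((1/t,\infty)\big).
\end{equation*}
The second term is $O(t^{2h})$ by the tail bound. For the first term, I write $\int_{[0,1/t]}u\,\mu(\mathrm{d}u)\le\int_0^{1/t}\bar\mu(v)\,\mathrm{d}v$ (layer-cake, dropping a nonpositive boundary term). Using $\bar\mu\le 1$ near $0$ and $\bar\mu(v)\le Cv^{-2h}$ elsewhere, this is at most $C'(1+t^{2h-1})$, because $2h<1$ makes $v^{-2h}$ integrable at $0$ with primitive of order $v^{1-2h}$. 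Multiplying by $t/3$ gives $O(t+t^{2h})=O(t^{2h})$ as $t\to0$, since $2h<1$.

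The only delicate point is the tail estimate on $\mu$, and I would secure it first. Once the Laplace-transform inequality above is in place, everything else is elementary, and it is exactly what replaces the missing first moment of $\mu$. Applied to $t^{1/h}$, this then gives $s_{\xi_B}(t)-\bar s_{\xi_B}(t)=O(t^{2})=o(t^{1-2h})$, as needed.
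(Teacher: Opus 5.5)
Your proof is correct and follows the paper's argument: the same split of $\Lambda(t)$ at $u=1/t$, the same pointwise bounds $\coth x-1/x\le\min(x/3,1)$, and the same tail estimate $\mu((1/t,\infty))\le (1-e^{-t^{2h}})/(1-e^{-1})=O(t^{2h})$. The only variation is that you get $\int_{[0,1/t]}u\,\mu(\mathrm{d}u)=O(t^{2h-1})$ from that tail bound by a layer-cake integration, whereas the paper differentiates the Laplace transform, $\int_0^{\infty}u e^{-tu}\,\mu(\mathrm{d}u)=2h\,t^{2h-1}e^{-t^{2h}}$, and uses $e^{-tu}\ge e^{-1}$ on $[0,1/t]$; both routes work.
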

	
	This latter lemma implies immediately Proposition \ref{prop:degree_hyperuniformity_palm_perturbed_lattice}. To prove the preceding result, the following estimates on the decay of $\mu$ will be needed.
	
	\begin{lemma}\label{lem:mu_estimate}
		\begin{equation*}
			\int_{0}^{t^{-1}} u \, \mu(\mathrm{d}u) \underset{t\to 0}{=} O\left(t^{2h-1}\right)
		\end{equation*}
		and 
		\begin{equation*}
			\int_{t^{-1}}^{+\infty} \mu(\mathrm{d}u)\underset{t\to 0}{=}O(t^{2h}).
		\end{equation*}
	\end{lemma}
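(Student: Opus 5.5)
Both estimates are Tauberian-type bounds on the measure $\mu$ of Proposition \ref{prop:poisson_summation_hyperuniform}, obtained from the behaviour at $0$ of its Laplace transform $L(s)=\int_0^{+\infty}e^{-su}\,\mu(\mathrm{d}u)=e^{-c s^{2h}}$. Here $c=1/2$ or $1$ depending on the normalisation; it plays no role. Since $2h<1$, we have
\[
1-L(s)\underset{s\to 0}{\sim} c\,s^{2h},
\]
and $\mu(\R_+)=L(0)=1$. The plan is to avoid a full Karamata Tauberian theorem and use only elementary one-sided comparisons of indicator-type functions with exponentials. This suffices because we only need $O(\cdot)$ bounds.

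\textbf{Tail estimate.} For $u\ge 1/t$ we have $1-e^{-tu}\ge 1-e^{-1}$. Hence
\[
(1-e^{-1})\,\mu([t^{-1},+\infty))\le \int_0^{+\infty}(1-e^{-tu})\,\mu(\mathrm{d}u)=1-e^{-ct^{2h}}\le c\,t^{2h}.
\]
This gives the second bound directly.

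\textbf{Truncated first moment.} Write $\int_0^{1/t}u\,\mu(\mathrm{d}u)=\int_0^{1/t}\mu((v,1/t])\,\mathrm{d}v$ by Fubini. Then
\[
\int_0^{1/t}\mu((v,1/t])\,\mathrm{d}v\le \int_0^{1/t}\mu((v,+\infty))\,\mathrm{d}v .
\]
Applying the tail estimate with $t$ replaced by $1/v$ gives $\mu((v,\infty))\le C v^{-2h}$ for $v\ge v_0$. For $v<v_0$ we use the trivial bound $\mu\le 1$. Therefore
\[
\int_0^{1/t}u\,\mu(\mathrm{d}u)\le v_0+C\int_{v_0}^{1/t}v^{-2h}\,\mathrm{d}v=O\bigl(t^{2h-1}\bigr),
\]
since $1-2h>0$ makes the integral grow like $(1/t)^{1-2h}$.

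As an alternative for the first bound, one can use $u\,\mathbf 1_{u\le 1/t}\le e\,u\,e^{-tu}$ and $-L'(t)=\int u e^{-tu}\,\mu(\mathrm{d}u)=2hc\,t^{2h-1}e^{-ct^{2h}}$, which gives the same result in one line.

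The only step needing care is making sure the tail bound holds uniformly in the range of $v$ used, that is, for all large $v$ rather than only asymptotically. This is automatic here, because the inequality in the tail estimate holds for every $t>0$, not just in the limit. Beyond that, the argument is routine, and the positivity of $\mu$ (from Bernstein's theorem) is the essential input throughout.
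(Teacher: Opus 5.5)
Your proof is correct. The tail bound is the paper's argument essentially verbatim. Your use of $1-e^{-x}\le x$ makes it hold for every $t>0$ rather than only as $t\to 0$, which is exactly what your second step needs. Your ``alternative'' for the truncated first moment is precisely the paper's proof. It differentiates the Laplace transform to get $\int_0^{+\infty}u\,e^{-tu}\,\mu(\mathrm{d}u)=2h\,t^{2h-1}e^{-t^{2h}}$, and then uses $\mathds{1}_{\{u\le 1/t\}}\le e\,e^{-tu}$.

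Your main route for the first estimate is genuinely different. You write $\int_0^{1/t}u\,\mu(\mathrm{d}u)=\int_0^{1/t}\mu((v,1/t])\,\mathrm{d}v$ and insert the uniform tail bound $\mu([v,+\infty))\le C v^{-2h}$, valid for all $v>0$ with $C=c/(1-e^{-1})$. This makes the first estimate a corollary of the second. The condition $2h<1$ then enters transparently, as integrability of $v^{-2h}$ at $0$; in fact the split at $v_0$ is unnecessary. The advantage of your route is robustness: it uses only positivity of $\mu$ and $1-L(s)=O(s^{2h})$, and never needs the exact form of $L'$ or differentiation under the integral sign.

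The paper's route is shorter and gives the explicit constant $2hc\,e$, which stays bounded as $h\uparrow 1/2$. Yours carries a factor $1/(1-2h)$, an artifact that blows up at $h=1/2$. For fixed $h<1/2$ this makes no difference to the $O(t^{2h-1})$ conclusion.
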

	
	The proof of Lemma \ref{lem:mu_estimate} is delayed till the end of the proof of Lemma \ref{lem:lambda_asympotics_hyperuniform}.
	
	\begin{proof}[Proof of Lemma \ref{lem:lambda_asympotics_hyperuniform}]
		Write $\Lambda(t) = \Lambda_{1}(t)+\Lambda_{2}(t)$ where $\Lambda_{1}(t)$ is the contribution of the integral defining $\Lambda(t)$ on $[0,1/t]$.\medbreak
		
		To bound $\Lambda_{1}(t)$, notice that 
		\begin{equation*}
			\coth(x)-\frac{1}{x} \leq \frac{x}{3}, \quad x>0,
		\end{equation*}
		so that
		\begin{equation*}
			\Lambda_{1}(t) \leq \frac{t}{3} \int_{0}^{t^{-1}} u\, \mu(\mathrm{d}u)
		\end{equation*}
		and the latter is $O(t^{2h})$ according to Lemma \ref{lem:mu_estimate}.\medbreak
		
		As for $\Lambda_{2}(t)$, it is trivially bounded by
		\begin{equation*}
			\int_{t^{-1}}^{+\infty} \, \mu(\mathrm{d}u).
		\end{equation*}
		and the latter is $O(t^{2h})$ according to Lemma \ref{lem:mu_estimate}.
	\end{proof}
	
	\begin{proof}[Proof of Lemma \ref{lem:mu_estimate}]
		For the first inequality, differentiation under the integral sign gives 
		\begin{equation*}
			\int_{0}^{+\infty}e^{-tu}u \, \mu(\mathrm{d}u)= 2h t^{2h-1}e^{-t^{2h}}.
		\end{equation*}
		Hence,
		\begin{align*}
			\int_{0}^{t^{-1}} u \, \mu(\mathrm{d}u) &\leq e\int_{0}^{t^{-1}} ue^{-tu}\, \mu(\mathrm{d}u)\\
			&\leq e t^{2h-1}e^{-t^{2h}}.
		\end{align*}
		
		For the second inequality, notice that 
		\begin{align*}
			1-e^{-t^{2h}}&=\int_{0}^{+\infty}1-e^{-tu} \, \mu(\mathrm{d}u)\\
			&\geq \int_{t^{-1}}^{+\infty}1-e^{-tu}\,\mu(\mathrm{d}u)\\
			&\geq (1-e^{-1})\int_{t^{-1}}^{+\infty}\, \mu(\mathrm{d}u).
		\end{align*}
		On the other hand, 
		\begin{equation*}
			1-e^{-t^{2h}} = t^{2h}+o(t^{2h}).
		\end{equation*}
		This finishes the proof in the hyperuniform case.
	\end{proof}
	
	\subsection{Hyperfluctuating case $h>1/2$}
	
	In the hyperfluctuating case, a similar reasoning can be conducted with some tweaks	as for $h>1/2$, $t \to e^{-\frac{1}{2}t^{2h}}$ is no longer a completely monotone function. This issue can be circumvented with the following observation.
	
	\begin{proposition}\label{prop:poisson_summation_hyperfluctuating}
		There exists a finite positive measure supported on $\R^{+}$ so that
		\begin{equation*}
			s_{\xi_B}(t)-\bar{s}_{\xi_B}(t) = \frac{\sqrt{2\pi}}{t^{1/h}}\int_{0}^{+\infty}\sum_{n \neq 0}\frac{e^{-\frac{1}{2u}(2\pi n+t)^{2}t^{-2/h}}}{\sqrt{u}} \, \mu(\mathrm{d}u).
		\end{equation*}
		
		Moreover, $\mu$ satisfies
		\begin{equation*}
			e^{-\frac{1}{2}|t|^{2h}} = \int_{0}^{+\infty}e^{-\frac{1}{2}t^{2}u} \,\mu(\mathrm{d}u), \quad t \in \R.  
		\end{equation*}
	\end{proposition}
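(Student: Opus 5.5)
The plan is to mimic the proof of Proposition~\ref{prop:poisson_summation_hyperuniform}. The difference is that complete monotonicity is now applied in the variable $t^{2}$ rather than $t$, so the measure $\mu$ mixes Gaussians instead of Cauchy kernels. Throughout, $h\in(1/2,1)$.

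\textbf{Step 1: the mixing measure.} Put $g(s)=e^{-\frac12 s^{h}}$ for $s\geq 0$. Since $0<h<1$, the map $s\mapsto \frac12 s^{h}$ is a Bernstein function: it is positive and its derivative $\frac h2 s^{h-1}$ is completely monotone. The composition of $e^{-x}$ with a Bernstein function is completely monotone, so $g$ is completely monotone on $(0,+\infty)$; it is also continuous at $0$ with $g(0)=1$. Bernstein's theorem \cite[Theorem 1.4]{Schilling2009}, after the harmless rescaling $u\mapsto u/2$ to match the paper's normalisation, gives a positive measure $\mu$ on $\R_{+}$ with
\begin{equation*}
g(s)=\int_0^{+\infty}e^{-\frac12 su}\,\mu(\mathrm{d}u), \qquad s\geq 0 .
\end{equation*}
Its total mass is $g(0)=1$, so $\mu$ is finite. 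Since $g(s)\to 0$ as $s\to+\infty$, monotone convergence gives $\mu(\{0\})=0$. Substituting $s=t^{2}$ yields the stated identity $e^{-\frac12|t|^{2h}}=\int_0^{+\infty} e^{-\frac12 t^{2}u}\,\mu(\mathrm{d}u)$.

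\textbf{Step 2: Poisson summation for a single Gaussian.} I would avoid applying Poisson summation directly to the non-smooth function $x\mapsto e^{-\frac12 t^{2}|x|^{2h}}$, and instead apply it to each Gaussian component. First insert the mixture into the definition of the structure factor:
\begin{equation*}
s_{\xi_B}(t)=\sum_{n\in\Z}e^{-int}e^{-\frac12 t^{2}|n|^{2h}}=\int_0^{+\infty}\sum_{n\in\Z}e^{-int}e^{-\frac12 t^{2}un^{2}}\,\mu(\mathrm{d}u).
\end{equation*}
Exchanging sum and integral is legitimate by Fubini, since $\sum_n\int_0^{+\infty} e^{-\frac12 t^2un^2}\,\mu(\mathrm{d}u)=\sum_n e^{-\frac12 t^2|n|^{2h}}<\infty$ for $t\neq0$. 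For fixed $u>0$, the classical Poisson summation formula for the Gaussian (the theta-function identity) gives
\begin{equation*}
\sum_{n\in\Z}e^{-int}e^{-\frac12 t^{2}un^{2}}=\frac{\sqrt{2\pi}}{t\sqrt u}\sum_{n\in\Z}e^{-\frac{(2\pi n+t)^{2}}{2t^{2}u}} .
\end{equation*}

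\textbf{Step 3: identify the $n=0$ term and conclude.} The same mixture argument applied to $\bar s_{\xi_B}(t)=\int_\R e^{-\frac12 t^2|x|^{2h}}e^{-itx}\,\mathrm{d}x$ uses the Gaussian Fourier transform $\int_\R e^{-\frac12 t^2ux^2}e^{-itx}\,\mathrm{d}x=\frac{\sqrt{2\pi}}{t\sqrt u}e^{-\frac{1}{2u}}$. Fubini applies here as well, since $\int_\R e^{-\frac12t^2|x|^{2h}}\,\mathrm{d}x<\infty$. This expression is exactly the $n=0$ term of the sum in Step~2. Subtracting it leaves
\begin{equation*}
s_{\xi_B}(t)-\bar s_{\xi_B}(t)=\int_0^{+\infty}\sum_{n\neq0}\frac{\sqrt{2\pi}}{t\sqrt u}e^{-\frac{(2\pi n+t)^{2}}{2t^{2}u}}\,\mu(\mathrm{d}u).
\end{equation*}
All terms are nonnegative, so Tonelli justifies every interchange. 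The integrand is well defined $\mu$-a.e.\ because $\mu(\{0\})=0$.

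The scaling obtained this way is $t^{-1}$ in front and $t^{-2}$ in the exponent. The statement writes $t^{1/h}$ and $t^{-2/h}$ instead; that form corresponds to rescaling the frequency variable as in \eqref{eq:poisson_formula} and using the mixture of $e^{-\frac12|\xi|^{2h}}$. I would redo the bookkeeping along that route and check which normalisation of $\mu$ (in $t$ versus in $t^{1/h}$) makes the two expressions coincide. This constant-tracking, together with the Bernstein-function composition in Step~1, is the main point requiring care; the rest is routine.
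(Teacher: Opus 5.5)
Your strategy is essentially the paper's: Bernstein's theorem for $s\mapsto e^{-\frac12 s^{h}}$, the substitution $s=x^{2}$ to get a Gaussian mixture, Poisson summation, and identification of the $n=0$ term with $\bar s_{\xi_B}$. Inserting the mixture first and applying Poisson summation only to Gaussians is slightly cleaner than the paper, which applies it to $e^{-\frac12 t^{2}|x|^{2h}}$ after the rescaling in \eqref{eq:poisson_formula}. However, Step~2 contains a substitution error, so the identity you actually derive is false for $h\neq 1$. With $g(s)=e^{-\frac12 s^{h}}=\int_0^{+\infty}e^{-\frac12 su}\,\mu(\mathrm{d}u)$, the summand of $s_{\xi_B}$ is $e^{-\frac12 t^{2}|n|^{2h}}=g(t^{2/h}n^{2})$, since $(t^{2/h}n^{2})^{h}=t^{2}|n|^{2h}$. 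You wrote $g(t^{2}n^{2})=\int_0^{+\infty}e^{-\frac12 t^{2}un^{2}}\,\mu(\mathrm{d}u)$, which equals $e^{-\frac12|t|^{2h}|n|^{2h}}$, not $e^{-\frac12 t^{2}|n|^{2h}}$. The same slip recurs in Step~3 for $\bar s_{\xi_B}$.

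So the discrepancy you flag at the end is not a question of normalising $\mu$. The measure $\mu$ is fixed by the second identity in the statement (uniqueness of Laplace transforms), and with that $\mu$ your display in Step~2 does not hold. Only a $t$-dependent image of $\mu$ would make it true, and undoing that image measure is exactly the correction below. Replace $t^{2}u$ by $t^{2/h}u$ throughout. The Gaussian Poisson formula $\sum_{n\in\Z}e^{-int}e^{-\frac12 an^{2}}=\sqrt{2\pi/a}\,\sum_{n\in\Z}e^{-(2\pi n+t)^{2}/(2a)}$ with $a=t^{2/h}u$ then gives exactly the prefactor $\sqrt{2\pi}/(t^{1/h}\sqrt{u})$ and the exponent $-\frac{1}{2u}(2\pi n+t)^{2}t^{-2/h}$. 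Likewise, $\int_{\R}e^{-\frac12 ax^{2}}e^{-itx}\,\mathrm{d}x=\sqrt{2\pi/a}\,e^{-t^{2}/(2a)}$ gives $\bar s_{\xi_B}(t)=\frac{\sqrt{2\pi}}{t^{1/h}}\int_0^{+\infty}u^{-1/2}e^{-\frac{1}{2u}t^{2-2/h}}\,\mu(\mathrm{d}u)$, which is the $n=0$ term. Your Fubini and Tonelli justifications carry over unchanged, and with this correction the argument proves the statement.
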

	
	\begin{proof}
		By Bernstein theorem, there exists a finite measure $\mu$ such that
		\begin{equation*}
			e^{\frac{1}{2}\xi^{h}} = \int_{0}^{+\infty}e^{-\frac{1}{2}\xi u} \, \mu(\mathrm{d}u), \quad \xi>0.
		\end{equation*}
		
		Hence,
		\begin{equation*}
			e^{\frac{1}{2}|\xi|^{2h}} = \int_{0}^{+\infty}e^{-\frac{1}{2}\xi^{2} u} \, \mu(\mathrm{d}u).
		\end{equation*}
		Now, following the same lines as in the proof of Proposition \ref{prop:poisson_summation_hyperuniform}, one derives the equations
		\begin{equation*}
			s_{\xi_B}(t)=\frac{\sqrt{2\pi}}{t^{1/h}} \int_{0}^{+\infty}\sum_{n \in \mathbb{Z}}\frac{e^{-\frac{1}{2u}(t+2\pi n)^{2}t^{-2/h}}}{\sqrt{u}} \, 	\mu(\mathrm{d}u)
		\end{equation*}
		and
		\begin{equation*}
			\bar{s}_{\xi_B}(t) = \frac{\sqrt{2\pi}}{t^{1/h}}\int_{0}^{+\infty}\frac{e^{-\frac{1}{2u}t^{2-2/h}}}{\sqrt{u}} \, \mu(\mathrm{d}u).
		\end{equation*}
	\end{proof}
	
	The preceding result allows to bound the difference between $s_{\xi_B}$ and $\bar{s}_{\xi_B}$ in the following manner.
	\begin{corollary}
		If $0 < t < 2\pi$, then
		\begin{equation*}
			0 \leq s_{\xi_B}(t)-\bar{s}_{\xi_B}(t) \leq 2 \left(\sum_{n \in \mathbb{Z}}e^{-\frac{2^{2h}}{2}t^{2}|n|^{2h}}-\int_{\R} e^{-\frac{2^{2h}}{2}t^{2}|x|^{2h}} \mathrm{d}x\right)
		\end{equation*}
	\end{corollary}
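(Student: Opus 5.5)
The plan is to start from the representation in Proposition \ref{prop:poisson_summation_hyperfluctuating}:
\begin{equation*}
	s_{\xi_B}(t)-\bar{s}_{\xi_B}(t) = \frac{\sqrt{2\pi}}{t^{1/h}}\int_{0}^{+\infty}\sum_{n \neq 0}\frac{e^{-\frac{1}{2u}(2\pi n+t)^{2}t^{-2/h}}}{\sqrt{u}} \, \mu(\mathrm{d}u).
\end{equation*}
The lower bound $0\leq s_{\xi_B}(t)-\bar{s}_{\xi_B}(t)$ is immediate, since $\mu$ is a positive measure and every summand is positive.

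For the upper bound I would compare each term with a lattice of half the spacing. For $n\neq 0$ the elementary inequality $|2\pi n+t|\geq \pi|n|$ holds whenever $|t|\leq \pi$. Since the summand is decreasing in $|2\pi n+t|$, this gives
\begin{equation*}
	s_{\xi_B}(t)-\bar{s}_{\xi_B}(t)\leq \frac{\sqrt{2\pi}}{t^{1/h}}\int_{0}^{+\infty}\sum_{n \neq 0}\frac{e^{-\frac{1}{2u}(\pi n)^{2}t^{-2/h}}}{\sqrt{u}} \, \mu(\mathrm{d}u).
\end{equation*}
The right-hand side is a full lattice sum of spacing $\pi$ with its $n=0$ term removed. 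I would then rerun the Poisson summation argument of Proposition \ref{prop:poisson_summation_hyperuniform} and Proposition \ref{prop:poisson_summation_hyperfluctuating} in reverse. The Gaussian mixture $x\mapsto \int e^{-\frac12 t^2x^2u}\mu(\mathrm{d}u)=e^{-\frac12 t^2|x|^{2h}}$ has, up to the normalisation constant, Fourier transform $y\mapsto \frac{\sqrt{2\pi}}{t^{1/h}}\int u^{-1/2}e^{-y^2t^{-2/h}/(2u)}\mu(\mathrm{d}u)$. Summing this over the lattice $\pi\Z$ rather than $2\pi\Z$ doubles the dual lattice, so the full sum becomes $2\sum_{m\in\Z}e^{-\frac12 t^2|2m|^{2h}}=2\sum_{m}e^{-\frac{2^{2h}}{2}t^2|m|^{2h}}$. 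The factor $2$ comes from the ratio of the lattice spacings. The $n=0$ term equals $\int_{\R}e^{-\frac12t^2|x|^{2h}}\mathrm{d}x$, which under the substitution $x=2y$ becomes $2\int_{\R}e^{-\frac{2^{2h}}{2}t^2|y|^{2h}}\mathrm{d}y$. Subtracting it yields exactly the claimed bound.

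The justifications are routine. Fubini (or Tonelli) lets the sum and the $\mu$-integral be exchanged, since all terms are positive. Poisson summation applies to $x\mapsto e^{-\frac{2^{2h}}{2}t^2|x|^{2h}}$, which is integrable and continuous and whose Fourier transform is a positive Gaussian mixture, summable over the lattice for $t>0$.

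The main obstacle is the range $\pi< t<2\pi$. There the inequality $|2\pi n+t|\geq\pi|n|$ fails for $n=-1$, because $2\pi-t$ can be arbitrarily small. I would first try to handle this by reindexing, replacing $n$ by $-n-1$ so that the nonzero frequencies appear as $|2\pi n'+(2\pi-t)|$, and comparing with the lattice around $2\pi-t\in(0,\pi)$. If that does not close, I would simply restrict the statement to $0<t\leq\pi$. Only the regime $t\to0$ is used afterwards, namely to show that the right-hand side is $O(1)$ or $o(t^{1-2h})$ via a Riemann-sum comparison, so this restriction would not affect Proposition \ref{prop:degree_hyperuniformity_palm_perturbed_lattice}.
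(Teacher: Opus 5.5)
For $0<t\le\pi$ your argument is the paper's proof step for step: the termwise bound $(2\pi n+t)^2\ge(\pi n)^2=\tfrac14(2\pi n)^2$, then Poisson summation over the half-spacing lattice, which produces the factor $2$ and $|2n|^{2h}=2^{2h}|n|^{2h}$, and finally the $n=0$ term rewritten via $x=2y$. The paper also states the termwise bound only for $0<t<\pi$ and never treats $(\pi,2\pi)$. So your fallback restriction is exactly what the paper's proof establishes. As you note, only small $t$ is used in Proposition~\ref{prop:estimate_hyperfluctuating_case}.

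Your reindexing idea would not close the remaining range termwise. After $n\mapsto -n-1$, the excluded index becomes $n'=-1$, and the surviving $n'=0$ term has frequency $2\pi-t$, which can be arbitrarily small. That term is then close to the $k=0$ value $\int_{\R}e^{-\frac12t^2|x|^{2h}}\,\mathrm{d}x$, and no comparison term with frequency $\pi|k|\ge\pi$ dominates it. However, for $\pi\le t<2\pi$ and $\tfrac12<h<1$, no fine comparison is needed. First, $\bar{s}_{\xi_B}(t)>0$, since it is a positive Gaussian mixture by Proposition~\ref{prop:poisson_summation_hyperfluctuating}. Second, $|n|^{2h}\ge|n|$. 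Together these give
\[
s_{\xi_B}(t)-\bar{s}_{\xi_B}(t)\le\sum_{n\in\Z}e^{-\frac12t^2|n|^{2h}}\le 1+2\sum_{n\ge1}e^{-\pi^2n/2}<1.02 .
\]
On the right-hand side, keep only the $n=0$ term of the lattice sum and substitute $y=2x$:
\[
2\Big(\sum_{n\in\Z}e^{-\frac{2^{2h}}{2}t^2|n|^{2h}}-\int_{\R}e^{-\frac{2^{2h}}{2}t^2|x|^{2h}}\,\mathrm{d}x\Big)\ge 2-\int_{\R}e^{-\frac12t^2|y|^{2h}}\,\mathrm{d}y=2-2\Big(\frac{2}{t^2}\Big)^{\frac1{2h}}\Gamma\Big(1+\frac1{2h}\Big)\ge 2-\frac{2\sqrt2}{\pi}>1.09 .
\]
The last inequality uses $2/t^2\le 2/\pi^2<1$, $\frac1{2h}>\frac12$, and $\Gamma\le1$ on $[1,2]$. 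Combining this with your termwise argument on $(0,\pi]$ proves the corollary on the full stated range $(0,2\pi)$, which neither your proposal nor the paper's proof does as written.
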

	
	\begin{proof}
		Note that 
		\begin{equation*}
			e^{-\frac{1}{2u}(2\pi n+t)^{2}t^{-2/h}}\leq e^{-\frac{1}{8u}(2\pi n)^{2}t^{-2/h}}
		\end{equation*}
		if $0<t<\pi$, $t>0$ and $n\neq 0$. Thus,
		\begin{equation*}
			s_{\xi_B}(t)-\bar{s}_{\xi_B}(t) \leq \int_{0}^{+\infty} \sum_{n \neq 0}e^{-\frac{1}{8u}(2\pi n)^{2}t^{-2/h}}\, \frac{\mu(\mathrm{d}u)}{\sqrt{u}}.
		\end{equation*}
		
		By Poisson summation formula, 
		\begin{equation*}
			\frac{\sqrt{2\pi}}{t^{1/h}\sqrt{u}}\sum_{n \in \mathbb{Z}} e^{-\frac{1}{8}(2\pi n)^{2}t^{-2/h}} = 2 \sum_{n \in \mathbb{Z}}e^{-2n^{2}t^{2/h}u}
		\end{equation*}
		Integrating with respect to $\mu$, the previous sum is equals to
		\begin{equation*}
			2\sum_{n \in \mathbb{Z}}e^{-\frac{2^{2h}}{2}t^{2}|n|^{2h}}.
		\end{equation*}
		Finally, 
		\begin{equation*}
			\begin{aligned}
				\frac{\sqrt{2\pi}}{t^{1/h}}\int_{0}^{+\infty}\frac{1}{\sqrt{u}} \, \mu(\mathrm{d}u)&=2\int_{\R} \int_{0}^{+\infty}e^{-2 t^{2/h}x^{2}u} \,\mu(\mathrm{d}u)\, \mathrm{d}x\\
				&=2\int_{\R} e^{-\frac{2^{2h}}{2} t^{2}|x|^{2h}} \, \mathrm{d}x.
			\end{aligned}
		\end{equation*}
		Combining the two preceding equalities yields the desired result.
	\end{proof}

	\begin{proposition}\label{prop:estimate_hyperfluctuating_case}
		\begin{equation*}
			s_{\xi_B}(t)-\bar{s}_{\xi_B}(t) \underset{t\to 0}{=}O(1).
		\end{equation*}
	\end{proposition}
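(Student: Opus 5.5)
The plan is to feed the bound of the preceding corollary into a Riemann sum versus integral comparison. Set $\lambda = \frac{2^{2h}}{2}t^{2}$ and $g_{\lambda}(x) = e^{-\lambda|x|^{2h}}$. For $0<t<2\pi$ the corollary gives
\begin{equation*}
0 \leq s_{\xi_B}(t)-\bar{s}_{\xi_B}(t) \leq 2\Big(\sum_{n\in\Z} g_{\lambda}(n) - \int_{\R} g_{\lambda}(x)\,\mathrm{d}x\Big).
\end{equation*}
It therefore suffices to show that the difference $D(\lambda)=\sum_{n\in\Z} g_\lambda(n)-\int_\R g_\lambda$ stays bounded as $\lambda\to 0$. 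Both terms individually blow up like $\lambda^{-1/(2h)}$, but their difference should stay of order $1$.

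The key observation is that $g_\lambda$ is even, equal to $1$ at the origin, and nonincreasing on $\R_{+}$. Using the symmetry, write
\begin{equation*}
D(\lambda) = 1 + 2\Big(\sum_{n\geq 1} g_\lambda(n) - \int_0^{+\infty} g_\lambda(x)\,\mathrm{d}x\Big).
\end{equation*}
Since $g_\lambda$ is nonincreasing on $[n-1,n]$, we have $g_\lambda(n)\leq\int_{n-1}^{n} g_\lambda(x)\,\mathrm{d}x$. Summing over $n\geq 1$ gives $\sum_{n\geq1}g_\lambda(n)\leq \int_0^{\infty}g_\lambda$. Hence $D(\lambda)\leq 1$ uniformly in $\lambda>0$.

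Combining this with the corollary yields $0\leq s_{\xi_B}(t)-\bar{s}_{\xi_B}(t)\leq 2$ for all $0<t<2\pi$, which is the claimed $O(1)$ bound. No uniformity issue arises, because the monotonicity estimate is uniform in $\lambda$.

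There is no real obstacle in this step; all the work was done in the corollary. The only point that needs care is that the corollary relies on the Poisson-summation representation from Proposition \ref{prop:poisson_summation_hyperfluctuating}, and I take that representation as given. One should also check that the integrals and sums converge absolutely, which is immediate since $g_\lambda$ decays like a stretched exponential. As a consequence, $s_{\xi_B}(t)=\bar{s}_{\xi_B}(t)+O(1)$, while $\bar{s}_{\xi_B}(t)\sim\alpha_h|t|^{1-2h}\to+\infty$ for $h>1/2$. This gives the asymptotic \eqref{eq:asymptotic_structure_factor} of Proposition \ref{prop:degree_hyperuniformity_palm_perturbed_lattice} in the hyperfluctuating regime.
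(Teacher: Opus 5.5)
Your proof is correct and follows the paper's route: you feed the preceding corollary into a monotone sum-versus-integral comparison for $g_\lambda(x)=e^{-\lambda|x|^{2h}}$. The only difference is in the last estimate. The paper bounds each cell term by the telescoping increment $g_\lambda(n-1)-g_\lambda(n)$ and gets the constant $6$, whereas you use directly that $g_\lambda(n)-g_\lambda(x)\leq 0$ on $[n-1,n]$ and get the sharper constant $2$.
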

	
	\begin{proof}[Proof Proposition \ref{prop:estimate_hyperfluctuating_case}]
		From the previous inequality,
		\begin{align*}
			s_{\xi_B}(t)-\bar{s}_{\xi_B}(t) &\leq 2+4\sum_{n =1}^{+\infty}\int_{n-1}^{n}e^{-\frac{2^{2h}}{2}t^{2}|n|^{2h}}-e^{-\frac{2^{2h}}{2}t^{2}|x|^{2h}}\, \mathrm{d}x\\
			&\leq 2+4\sum_{n =1}^{+\infty}\int_{n-1}^{n}e^{-\frac{2^{2h}}{2}t^{2}|n-1|^{2h}}-e^{-\frac{2^{2h}}{2}t^{2}|n|^{2h}} \mathrm{d}x\\
			&=6.
		\end{align*}
	\end{proof}
	
	Proposition \ref{prop:estimate_hyperfluctuating_case} finishes the proof of Proposition \ref{prop:degree_hyperuniformity_palm_perturbed_lattice} in the hyperfluctuating case.
	
	\clearpage
	
	\appendix
	
	\section{Ergodicity of Gaussian process with stationary increments}\label{ap:Gaussian_proces_stationary_increments}
	
	The goal of this section is to prove Theorem \ref{thm:Maruyama_stationary_increments}, characterizing the ergodic and mixing properties of a $\R^{d}$-valued Gaussian $(B_{t})_{t\in \R^{d}}$ process with stationary increments in terms of its Levy measure $\mu$.\medbreak
	
	The proof is divided into two steps:\medbreak
	
	\textbf{Step 1}: We characterize the ergodicity of a real-valued Gaussian process $(B_{t})_{t\in \R^{d}}$ with stationary increments.\medbreak
	
	\textbf{Step 2}: Denoting by $B=(B^{(1)}, \dots,B^{(d)})$ the coordinates of $B$, we relate the ergodicity of $B^{(i)}$ to the ergodicity of $B$.
	
	\subsection{Real-valued Gaussian process with stationary increments}
	
	In this step, we let $B$ be a real-valued stochastic Gaussian process with stationary increments and continuous covariance. We denote by
	\begin{itemize}
		\item $\Sigma_{s,t}=\cov{B_{s}}{B_{t}}$ its covariance function, here $s,t\in \R^{d}$,
		\item $v(t)=\var{B_{t}}$ its variogram,
		\item $\mu$ its Levy measure.
	\end{itemize}
	
	The goal of this paragraph is to prove the following result.
	
	\begin{theorem}\label{thm:Maruyama_stationary_increments_1}
		Let $B$ be a real-valued Gaussian process with stationary increments.
		\begin{itemize}
			\item $B$ is ergodic (and weakly-mixing) if and only if $\mu$ is non-atomic,
			\item $B$ is mixing if and only if
			\begin{equation*}
				V_{a,b}(t) \underset{\|t\|\to +\infty}{\longrightarrow}  0, \quad a,b \in \R^{d},
			\end{equation*}
			where 
			\begin{align*}
				V_{a,b}(t)&=\cov{B_{-a}}{ B_{t+b}-B_{t}}\\
				&=v(t+a)+v(t+b)-v(t+a+b)-v(t).
			\end{align*}
		\end{itemize}
		In particular, if $\mu$ is absolutely continuous, then $B$ is mixing.
	\end{theorem}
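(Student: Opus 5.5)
The plan is to reduce everything to the classical Maruyama argument on the Gaussian Hilbert space of $B$, replacing Bochner's theorem by the Levy-Khintchine representation \eqref{eq:levy_khintchine} with $a=0$.

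\textbf{Step 1: spectral model of the shift.} Let $H\subset \L^{2}(\P)$ be the closed linear span of $\{B_{s}: s\in\R^{d}\}$. By \eqref{eq:levy_khintchine},
\begin{equation*}
\cov{B_{s}}{B_{u}}=\int_{\R^{d}}(e^{i(s,x)}-1)\overline{(e^{i(u,x)}-1)}\,\mu(\mathrm{d}x).
\end{equation*}
So $B_{s}\mapsto e_{s}:=e^{i(s,\cdot)}-1$ extends to an isometry $J$ from $H$ onto the closed span $K\subset \L^{2}(\mu)$ of the $e_{s}$; real-valuedness is handled by the symmetry of $\mu$. The shift $\theta_{t}$ acts on $H$ by $B_{s}\mapsto B_{s+t}-B_{t}$. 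Since $e_{s+t}-e_{t}=e^{i(t,\cdot)}e_{s}$, it becomes multiplication $M_{t}$ by $e^{i(t,x)}$ on $K$. I then check that $K=\L^{2}(\mu)$: if $g\perp e_{s}$ for all $s$, differencing in $s$ shows that the finite measure $g\,|e_{u}|^{2}\mathrm{d}\mu$ has vanishing Fourier transform, so $g=0$ $\mu$-a.e. off $\{0\}$. The origin is irrelevant, because $e_{s}(0)=0$, so I discard any mass of $\mu$ at $0$.

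\textbf{Step 2: lifting to all of $\L^{2}(\sigma(B))$.} The Koopman operator of $\theta_{t}$ on $\L^{2}(\Omega,\sigma(B))$ is the second quantization of $M_{t}$. By the Wiener chaos decomposition, it acts on the $n$-th chaos as the symmetric tensor power $M_{t}^{\odot n}$, which is multiplication by $e^{i(t,x_{1}+\dots+x_{n})}$ on $\L^{2}_{\mathrm{sym}}(\mu^{\otimes n})$. From this:

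\emph{Ergodicity and weak mixing.} Weak mixing amounts to the absence of non-constant eigenfunctions. An eigenvector of multiplication on $\L^{2}(\mu^{\otimes n})$ forces $\mu^{\otimes n}$ to charge a hyperplane $\{x_{1}+\dots+x_{n}=c\}$. Since the image of a product of measures under the sum map is a convolution, this happens only if $\mu$ has an atom off $0$. Conversely, an atom $x_{0}\neq 0$ gives an eigenvector $\mathds{1}_{\{x_{0}\}}$ in the first chaos. That eigenvector is a non-trivial invariant Gaussian component (rotating periodically), which yields a non-trivial invariant event, for instance through the modulus of the corresponding complex Gaussian coordinate. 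Hence ergodic $\iff$ weakly mixing $\iff$ $\mu$ non-atomic. Infinite mass of $\mu$ near $0$ is harmless if one works with $\sigma$-finite measures, or restricts to $\{|x|>\varepsilon\}$.

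\emph{Mixing.} Mixing is equivalent to $\langle M_{t}f,g\rangle_{\L^{2}(\mu)}\to 0$ for all $f,g$. Indeed, on the $n$-th chaos the correlations are sums of products of $n$ first-chaos correlations, and bounded functionals of $B$ are approximated by finite chaos expansions, which gives mixing on all of $\L^{2}$. By density of $\{e_{s}\}$ together with uniform boundedness of $M_{t}$, it suffices to test $f=e_{b}$ and $g=e_{-a}$. Then $\langle M_{t}e_{b},e_{-a}\rangle=\cov{B_{t+b}-B_{t}}{B_{-a}}=V_{a,b}(t)$, and polarization with $v$ gives the stated formula.

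\textbf{Step 3: absolutely continuous case.} Here
\begin{equation*}
V_{a,b}(t)=\int_{\R^{d}} e^{i(t,x)}e_{b}(x)\overline{e_{-a}(x)}\,\mu(\mathrm{d}x),
\end{equation*}
and $|e_{b}\,e_{-a}|\le C(1\wedge\|x\|^{2})\in \L^{1}(\mu)$. So $V_{a,b}$ is the Fourier transform of an $\L^{1}(\mathrm{d}x)$ function, and it tends to $0$ by Riemann-Lebesgue.

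\textbf{Main obstacle.} I expect the hardest step to be Step 2, namely the rigorous passage from the first chaos to arbitrary events. For mixing I would first prove convergence of the characteristic functionals $\esp{e^{i\sum_{j}\lambda_{j}B_{s_{j}}}\,e^{i\sum_{k}\eta_{k}(\theta_{t}B)_{u_{k}}}}$. These are explicit Gaussian expressions whose cross-term is a finite combination of the $V_{a,b}(t)$. I would then conclude by density of trigonometric polynomials in $\L^{2}(\sigma(B))$. This route avoids chaos calculus altogether.
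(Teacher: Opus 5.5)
Your proof is correct. For the ergodicity and weak-mixing part it takes a genuinely different route from the paper.

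\textbf{The paper's route.} It never leaves finite-dimensional characteristic functions. Lemma \ref{lem:ergodic_characterization} reduces everything to $\esp{f(B)\overline{g(\theta_{t}B)}}$ for trigonometric exponentials $f,g$. This is computed explicitly, and the cross term is written as $\Sigma_{f,g}=\widehat{\gamma}$ for a finite signed measure $\gamma$ built from $\mu$. Hence $e^{\Sigma_{f,g}}=\widehat{e^{\gamma}}$ in the convolution Banach algebra. Wiener's Ces\`aro lemma (Lemma \ref{lem:Cesaro_convergence}) then turns ergodicity into the condition $e^{\gamma}(\{0\})=1$. This holds when $\mu$, and hence every $\gamma^{n}$, is non-atomic. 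It fails for the positive choice $\gamma=|1-e^{i(a,\cdot)}|^{2}\mu$ when $\mu$ has an atom. Weak mixing is treated the same way using $e^{2\gamma}$.

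\textbf{Your route.} You identify the Koopman representation itself. The first chaos is $L^{2}(\mu)$, on which $\theta_{t}$ acts as multiplication by $e^{i(t,x)}$. The $n$-th chaos is $L^{2}_{\mathrm{sym}}(\mu^{\otimes n})$, with multiplication by $e^{i(t,x_{1}+\dots+x_{n})}$. The point spectrum on $L^{2}_{0}$ is then read off from hyperplanes charged by $\mu^{\otimes n}$.

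\textbf{What each buys.} Your approach costs more machinery: second quantization, the eigenfunction characterization of weak mixing for $\R^{d}$-actions, and the density argument of Step 1. You handle the latter correctly, including the differencing trick and $\sigma$-finiteness near $0$. In return, it explains structurally why ergodicity and weak mixing coincide: all possible eigenvalues come from atoms of $\mu$ and already appear in the first chaos. It also gives an explicit invariant function $|Z|$ for the converse. The paper's approach is more elementary and self-contained.

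\textbf{Mixing and Step 3.} For mixing, the fallback in your last paragraph is exactly the paper's argument. Your Step 3 uses $|e_{b}e_{-a}|\le C(1\wedge\|x\|^{2})$ to apply Riemann--Lebesgue in one line. This replaces the paper's split of $V_{a,b}$ over $[-1,1]^{d}$ and its complement.

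\textbf{Minor point.} Polarization actually gives $V_{a,b}(t)=\tfrac12\bigl(v(t+a)+v(t+b)-v(t+a+b)-v(t)\bigr)$. The factor $\tfrac12$ is missing in the statement's display, which does not affect the criterion.
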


	We recall some important properties of $\mu$. Namely it is defined as the unique symmetric Borel measure satisfying 
	\begin{equation}\label{eq:levy_measure}
		\int_{\R^{d}} (1\wedge \|x\|^{2}) \, \mu(\mathrm{d}x)<+\infty.
	\end{equation}
	and for which 
	\begin{equation}\label{eq:Levy_measure_variogram}
		v(t)=\int_{\R^{d}}|1-e^{i(x,t)}|^{2}\, \mu(\mathrm{d}x)
	\end{equation}
	
	Playing with the symmetry and equation \eqref{eq:levy_khintchine} yields
	\begin{equation}\label{eq:levy_measure_covariance}
		\Sigma_{s,t} = \int_{\R^{d}}(1-e^{i(x,s)})(1-e^{-i(x,t)})\, \mu(\mathrm{d}x).
	\end{equation}
	
	\begin{remark}
		The proof of Theorem \ref{thm:Maruyama_stationary_increments_1} is inspired by the proof of Maruyama's theorem \cite{Maruyama1949}. The latter characterizes the ergodicity of a real-valued stationary Gaussian process $(X_{t})_{t\in \R^{d}}$. Say  $X$ is stationary if the distribution of $(X_{t-s})_{t\in \R^{d}}$ does not depend on $s$. The law of a stationary Gaussian process is identified by its covariance function
		\begin{equation*}
			r(t)=\cov{X_{0}}{X_{t}}.
		\end{equation*}
		The latter is a real, positive definite function, and provided it is continuous, it has a spectral decomposition
		\begin{equation*}
			r(t)=\Re\left(\int_{\R^{d}}e^{i(t,x)}\, \lambda(\mathrm{d}x)\right)
		\end{equation*}
		where $\lambda$ is a positive Borel measure, known as the spectral measure. This is known as Bochner's theorem and we refer the reader to \cite[Theorem 4.11]{Schilling2009} for a proof.\medbreak
		
		The ergodic properties of $X$ are related to the spectral measure via Maruyama's theorem which is recalled hereafter.
		
		\begin{theorem}[Maruyama \cite{Maruyama1949}]
			Let $X$ be a real-valued stationary Gaussian process, then
			\begin{itemize}
				\item $X$ is ergodic (and weakly-mixing) if and only if $\lambda$ is non-atomic.
				\item $X$ is mixing if and only if 
				\begin{equation*}
					r(t)\underset{\|t\|\to +\infty}{\longrightarrow}0.
				\end{equation*}
			\end{itemize}
			In particular, if $\lambda$ is absolutely continuous, then $X$ is mixing.
		\end{theorem}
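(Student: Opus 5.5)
The plan is to realize $X$ through its Gaussian Hilbert space and reduce everything to spectral properties of the shift. Let $H\subset L^{2}(\P)$ be the closed linear span of $\{X_{t}\}_{t\in\R^{d}}$. The map $X_{t}\mapsto e^{i(t,\cdot)}$ extends, via Bochner's theorem, to a unitary isomorphism between $H$ and the closed span of exponentials in $L^{2}(\lambda)$, where $\lambda$ is symmetrized if needed. Under this isomorphism the shift $U_{s}$, defined by $U_{s}X_{t}=X_{t+s}$, becomes multiplication by $e^{i(s,x)}$. Hence the spectral measure of $U$ restricted to $H$ is governed by $\lambda$. We also use the Wiener--It\^o chaos decomposition
\[
L^{2}(\sigma(X))=\bigoplus_{n\geq 0}H^{\odot n},
\]
on which $U_{s}$ acts as $U_{s}^{\otimes n}$.

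\textbf{Mixing.} For necessity, if $X$ is mixing then covariances of square-integrable functionals decorrelate, so $r(t)=\cov{X_{0}}{X_{t}}\to 0$. One can get this by truncating $X_{0}$ and $X_{t}$ to bounded functions and using uniform integrability of Gaussians. For sufficiency, fix finitely many times $t_{1},\dots,t_{k}$ and $s_{1},\dots,s_{m}$ and coefficients $a\in\R^{k}$, $b\in\R^{m}$. The joint characteristic function is
\[
\esp{e^{i\sum_{k} a_{k}X_{t_{k}}+i\sum_{j} b_{j}X_{s_{j}+t}}}=\exp{-\tfrac12\Big(Q_{a}+Q_{b}+2\sum_{k,j}a_{k}b_{j}\,r(s_{j}+t-t_{k})\Big)},
\]
where $Q_{a}$ and $Q_{b}$ are the variances of the two linear combinations. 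As $\|t\|\to\infty$ this converges to the product of the two marginal characteristic functions. So the finite-dimensional blocks become asymptotically independent, which gives the mixing relation for cylinder events. We then pass to general events by approximating them in $L^{1}$ by cylinder events, using that $\theta_{t}$ preserves the law.

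\textbf{Ergodicity and weak mixing.} For necessity, suppose $\lambda$ has an atom at some $x_{0}$. Let $Z\in H$ correspond to the indicator $\mathds{1}_{\{x_{0}\}}$, combined with $\mathds{1}_{\{-x_{0}\}}$ to keep things real. Then $Z$ is a nonzero Gaussian variable and $U_{s}Z=e^{i(s,x_{0})}Z$. If $x_{0}=0$, $Z$ is a non-constant invariant variable. Otherwise $|Z|^{2}$, built from the real and imaginary parts of the two-dimensional Gaussian, is invariant and non-constant. Either way $X$ is not ergodic, and therefore not weakly mixing.

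For sufficiency, assume $\lambda$ is non-atomic, so $U$ restricted to $H$ has continuous spectral measures. On $H^{\otimes n}$ the spectral measure of $U^{\otimes n}$ at an elementary tensor is a convolution of spectral measures from $H$. A convolution with a continuous measure is continuous, so $U$ has purely continuous spectrum on every chaos of order $n\geq 1$. That is, $U$ has continuous spectrum on the orthocomplement of the constants in $L^{2}(\sigma(X))$. The standard spectral characterization then gives weak mixing: by Wiener's lemma, $\frac{1}{\Vol{\mathsf{B}_{r}}}\int_{\mathsf{B}_{r}}|\langle U_{t}f,g\rangle|^{2}\,\mathrm{d}t$ tends to the sum of the squares of the atom masses, which is zero here. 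Weak mixing implies ergodicity.

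The last assertion, that absolute continuity of $\lambda$ gives mixing, follows from the Riemann--Lebesgue lemma, since it yields $r(t)\to 0$. I expect the main obstacle to be the sufficiency direction for weak mixing: setting up the chaos decomposition cleanly and checking that the spectral type on $H^{\odot n}$ is dominated by convolution powers of $\lambda$. The mixing direction is essentially a characteristic-function computation plus an approximation argument.
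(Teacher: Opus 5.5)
Your argument is correct, but it takes a different route from the paper. The paper does not prove this statement itself; it quotes it from \cite{Maruyama1949}. Its own argument is the adaptation to stationary increments, Theorem~\ref{thm:Maruyama_stationary_increments_1}, following \cite{Rosinski1997}, and specialized to a stationary $X$ it stays at the level of characteristic functions. By Lemma~\ref{lem:ergodic_characterization} one only tests $f=e^{i\sum_j a_jX_{t_j}}$ and $g=e^{i\sum_k b_kX_{s_k}}$. For these, $\esp{f\,\overline{g\circ\theta_t}}=\esp{f}\overline{\esp{g}}\,e^{\Sigma_{f,g}(t)}$, where $\Sigma_{f,g}(t)=\int e^{i(t,x)}\,\gamma(\mathrm{d}x)$ and $\gamma=p\,\lambda$ with $p$ a trigonometric polynomial. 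In the convolution algebra of finite measures, $e^{\Sigma_{f,g}}$ is the Fourier transform of $e^{\gamma}=\sum_{n\geq0}\gamma^{*n}/n!$. Lemma~\ref{lem:Cesaro_convergence} then turns ergodicity into $e^{\gamma}(\{0\})=1$ and weak mixing into $e^{2\gamma}(\{0\})-2e^{\gamma}(\{0\})+1=0$. Both are automatic when $\lambda$ is non-atomic, while an atom of $\lambda$ at $\pm x_0$ makes $\gamma^{*2}$ charge the origin for $f=g=e^{iaX_0}$, $a\neq0$.

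Your mixing part is essentially this same computation. For ergodicity and weak mixing you instead identify the Koopman representation: Gaussian space $\cong L^2(\lambda)$, chaos decomposition, convolution powers as spectral types, then Wiener's lemma. Both proofs rest on the same fact, that convolution powers of a non-atomic measure are non-atomic; the series $\sum_n\gamma^{*n}/n!$ is just the trace of your chaos decomposition on exponential functionals. Your version gives the whole spectral picture on $L^2_0(\sigma(X))$ at once: for instance, absolutely continuous $\lambda$ yields absolutely continuous spectrum, hence mixing, with no further computation. It also exhibits an explicit non-constant invariant function in the necessity direction. The paper's version is more elementary, avoiding the Wiener--It\^o decomposition, second quantization and the spectral theorem for $\R^d$-actions. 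It also carries over verbatim to increments, where only the finite weighted measures $\gamma_{a,b}$ built from a possibly infinite L\'evy measure appear.

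Two small repairs are needed.
\begin{enumerate}
\item In the necessity step, for $x_0\neq0$, $Z$ has to be the complex element of the complexified Gaussian space corresponding to $\mathds{1}_{\{x_0\}}$. A real $Z$ cannot satisfy $U_sZ=e^{i(s,x_0)}Z$. Its real and imaginary parts correspond to $\frac12(\mathds{1}_{\{x_0\}}+\mathds{1}_{\{-x_0\}})$ and $\frac{1}{2i}(\mathds{1}_{\{x_0\}}-\mathds{1}_{\{-x_0\}})$. By symmetry of $\lambda$ these are independent centered Gaussians of variance $\lambda(\{x_0\})/2$, so $|Z|^2$ is exponentially distributed and in particular not constant.
\item In the mixing step, convergence of characteristic functions gives the limit for bounded continuous (or trigonometric) functionals of finitely many coordinates, not for arbitrary cylinder events. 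Conclude instead by density of such functionals in $L^2$, as in Lemma~\ref{lem:ergodic_characterization}.
\end{enumerate}
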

		
		Note the resemblance with Theorem \ref{thm:Maruyama_stationary_increments_1}. In fact, one can draw an analogy and consider that the Levy $\mu$ is to a Gaussian process with stationary increments what the spectral measure $\lambda$ is to a stationary Gaussian process. Similarly, the variogram $v$ of Gaussian process with stationary increments plays a similar role as the covariance function $r$ of a stationary Gaussian process.\medbreak
		
		This analogy will be useful as one can adapt the original proof of Maruyama to the framework of Gaussian processes with stationary increments. This is what we do hereafter.  
	\end{remark}

	We start with a classical lemma translating ergodicity and mixing conditions into simpler conditions. 
	
	\begin{lemma}\label{lem:ergodic_characterization}
		$B$ is 
		\begin{itemize}
			\item ergodic if
			\begin{equation*}
				\lim_{r\to +\infty}\frac{1}{\Vol{\mathsf{B}_{r}}}\int_{\mathsf{B}_{r}}\esp{f(B)\overline{g(\theta_{t}B)}}\, \mathrm{d}t=\esp{f(B)}\overline{\esp{g(B)}},
			\end{equation*}
			\item weakly-mixing if
			\begin{equation*}
				\lim_{r\to +\infty}\frac{1}{\Vol{\mathsf{B}_{r}}}\int_{\mathsf{B}_{r}}\left|\,\esp{f(B)\overline{g(\theta_{t}B)}}-\esp{f(B)}\overline{\esp{g(B)}}\,\right|^{2}\, \mathrm{d}s=0,
			\end{equation*}
			\item strong-mixing if
			\begin{equation*}
				\lim_{\|t\|\to +\infty}\esp{f(B)\overline{g(\theta_{t}B)}}=\esp{f(B)}\overline{\esp{g(B)}}
			\end{equation*}
		\end{itemize}
		for any $f,g$ of the forms 
		\begin{equation}\label{eq:exp_function}
			f(B)=\exp{i\sum_{j=1}^{n}a_{j}B_{t_{j}}}, \quad g(B)=\exp{i\sum_{j=1}^{n}b_{j}B_{t_{j}}}
		\end{equation} 
		with $a_{j}, b_{j} \in \R$ and $ t_{j} \in \R^{d}$.
	\end{lemma}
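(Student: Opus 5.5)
The plan is the standard density argument. Write $\mathcal{H}$ for the closed linear span in $\mathbb{L}^{2}(\P)$ of the functions \eqref{eq:exp_function}. It contains the constants and is stable under products and complex conjugation. It also separates the finite-dimensional distributions of $B$. So, by the monotone class theorem (or Stone--Weierstrass on finite-dimensional marginals followed by a martingale/cylinder approximation), $\mathcal{H}$ is all of $\mathbb{L}^{2}(\sigma(B),\P)$. Note also that $\theta_{t}$ maps functions of the form \eqref{eq:exp_function} to functions of the same form, since $\theta_{t}B_{s}=B_{s+t}-B_{t}$ is again a finite linear combination of values of $B$. Moreover, $f\mapsto f\circ\theta_{t}$ is an isometry of $\mathbb{L}^{2}$, by stationarity of increments.

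\textbf{Strong mixing.} Fix general $F,G\in\mathbb{L}^{2}$ and $\varepsilon>0$. Choose trigonometric polynomials $f,g$, i.e.\ finite combinations of \eqref{eq:exp_function}, with $\|F-f\|_{2},\|G-g\|_{2}<\varepsilon$. The hypothesis passes to finite linear combinations by bilinearity, since products of two such exponentials are again of the form \eqref{eq:exp_function}. Then
\begin{equation*}
\left|\esp{F\overline{G\circ\theta_{t}}}-\esp{f\overline{g\circ\theta_{t}}}\right|\leq \|F-f\|_{2}\|G\|_{2}+\|f\|_{2}\|G-g\|_{2},
\end{equation*}
uniformly in $t$, by Cauchy--Schwarz and the isometry. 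The same bound holds for $\esp{F}\overline{\esp{G}}$. Letting $\|t\|\to\infty$ and then $\varepsilon\to0$ gives the mixing limit for all $\mathbb{L}^{2}$ functions, in particular for indicators of events $A,A'$.

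\textbf{Ergodicity and weak mixing.} The Cesàro version follows from the same uniform estimate, averaged over $\mathsf{B}_{r}$. The weak-mixing version uses the same estimate together with the elementary bound $\bigl||x|^{2}-|y|^{2}\bigr|\leq |x-y|(|x|+|y|)$, where all quantities are bounded by $\|F\|_2\|G\|_2+1$. Passing from the squared to the absolute-value form in the definition is Cauchy--Schwarz on the average. For ergodicity, take $F=G=\mathds{1}_{A}$ with $A$ invariant. Then $\esp{F\overline{G\circ\theta_{t}}}=\P(A)$ for all $t$, so the Cesàro limit forces $\P(A)=\P(A)^{2}$.

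\textbf{Main obstacle.} The only delicate step is justifying the density of $\mathcal{H}$ in $\mathbb{L}^{2}(\sigma(B))$. One approach: the $\sigma$-algebra is generated by countably many coordinates, by continuity of the variogram and separability. Bounded functions of finitely many coordinates are dense, by martingale convergence. On $\R^{n}$, trigonometric polynomials are dense in $\mathbb{L}^{2}$ of any probability measure, since a function orthogonal to all characters has a vanishing Fourier transform of the associated signed measure.
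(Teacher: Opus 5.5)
Your proposal is correct and follows essentially the same route as the paper. The paper cites the $\mathbb{L}^{2}$ characterizations of ergodicity, weak mixing and mixing from Einsiedler--Ward and asserts that linear combinations of the exponential functionals are dense in $\mathbb{L}^{2}(\sigma(B),\P)$; you supply the details of both steps, namely the uniform-in-$t$ approximation estimate via the isometry $f\mapsto f\circ\theta_{t}$ and the Fourier-uniqueness density argument. One small point you gloss over: for weak mixing, the hypothesis passes from single exponentials to finite linear combinations by Cauchy--Schwarz/Minkowski applied to the averaged squared moduli, not by bilinearity as you say, but this is routine.
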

	
	\begin{proof}
		$B$ is ergodic (\textit{resp.} weakly-mixing or mixing) if and only if the relations in Lemma \ref{lem:ergodic_characterization} holds for any $f(B), g(B) \in \L^{2}(\P)$; see \cite[Section 2.7]{Einsiedler2011}. The results follows as any $f(B), g(B)$ can be approximated by a linear combination of functions of the form
		\begin{equation*}
			h(B)=\exp{i\sum_{j=1}^{n}a_{j}B_{t_{j}}}.
		\end{equation*}
	\end{proof}
	
	Now, as $(B_{t_{1}}, \dots, B_{t_{n}})$ is a Gaussian vector, so is $\sum_{i=1}^{n}a_{i}B_{t_{i}}$. The inner integrands in Lemma \ref{lem:ergodic_characterization} can thus be explicitly computed. This is the content of the following lemma whose proof is left to the reader.
	\begin{lemma}\label{lem:gaussian_law}
		Let $f$ and $g$ defined as in \eqref{eq:exp_function}. Then,
		\begin{align*}
			&\esp{f(B)} =\exp{-\frac{\Sigma_{f}^{2}}{2}},\\
			&\esp{\overline{g(B)}} =\exp{-\frac{\Sigma_{g}^{2}}{2}},\\
			&\esp{f(B)\overline{g(\theta_{t}B)}}=\exp{-\frac{\Sigma_{f}^{2}+\Sigma_{g}^{2}-2\Sigma_{f,g}(t)}{2}},\\
			\intertext{with}
			&\Sigma_{f}^{2}=\sum_{1 \leq j,k \leq n}a_{j}a_{k}\Sigma_{t_{j}, t_{k}},\\
			&\Sigma_{g}^{2}=\sum_{1 \leq j,k \leq n}b_{j}b_{k}\Sigma_{t_{j}, t_{k}},\\
			&\Sigma_{f,g}(t)=\sum_{1 \leq j,k \leq n}a_{j}b_{k}(\Sigma_{t_{j},t+t_{k}}-\Sigma_{t_{j},t})=\sum_{1 \leq j,k \leq n}a_{j}b_{k}V_{-t_{j},t_{k}}(t).
		\end{align*}
	\end{lemma}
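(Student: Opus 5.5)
The identities all follow from the fact that, for fixed points $t_1,\dots,t_n\in\R^{d}$, the vector $(B_{t_1},\dots,B_{t_n},B_{t+t_1}-B_t,\dots,B_{t+t_n}-B_t)$ is a centered Gaussian vector. Hence every real linear combination of its entries is a centered real Gaussian variable $Z$, for which $\esp{e^{iZ}}=\exp{-\frac12\var{Z}}$. The plan is to write each quantity in the statement as such an $\esp{e^{iZ}}$ and then compute $\var{Z}$ from the covariance function $\Sigma_{s,t}$. In doing so I read $g(\theta_t B)$ as $\exp{i\sum_k b_k(B_{t+t_k}-B_t)}$, since $\theta_t B_x=B_{x+t}-B_t$.

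For the first two formulas, take $Z=\sum_j a_jB_{t_j}$. Bilinearity of the covariance gives $\var{Z}=\sum_{j,k}a_ja_k\Sigma_{t_j,t_k}=\Sigma_f^2$, so $\esp{f(B)}=\exp{-\Sigma_f^2/2}$. Similarly, $\overline{g(B)}=e^{-iZ'}$ with $Z'=\sum_k b_kB_{t_k}$; since $-Z'$ is also centered Gaussian with the same variance, we get $\esp{\overline{g(B)}}=\exp{-\Sigma_g^2/2}$.

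For the third formula, take $Z=\sum_j a_jB_{t_j}-\sum_k b_k(B_{t+t_k}-B_t)$. Then
\begin{equation*}
\var{Z}=\var{\sum_j a_jB_{t_j}}+\var{\sum_k b_k\theta_tB_{t_k}}-2\sum_{j,k}a_jb_k\cov{B_{t_j}}{B_{t+t_k}-B_t}.
\end{equation*}
The first term is $\Sigma_f^2$. The second term equals $\Sigma_g^2$ because $B$ has stationary increments, so $\theta_tB\stackrel{(d)}{=}B$ and in particular the covariances of $\theta_tB$ are again $\Sigma_{t_j,t_k}$. In the cross term, bilinearity gives $\cov{B_{t_j}}{B_{t+t_k}-B_t}=\Sigma_{t_j,t+t_k}-\Sigma_{t_j,t}$, and by the definition of $V_{a,b}$ with $a=-t_j$ and $b=t_k$ this is exactly $V_{-t_j,t_k}(t)$. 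Putting these together yields the claimed expression $\exp{-\frac{1}{2}(\Sigma_f^2+\Sigma_g^2-2\Sigma_{f,g}(t))}$.

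The only step that needs care is checking the alternative form of $V_{a,b}(t)$ written in terms of $v$, which is used later but not in this lemma. It follows from the polarization identity $\Sigma_{s,u}=\frac12\bigl(v(s)+v(u)-v(s-u)\bigr)$, itself a consequence of stationary increments and $B_0=0$. Substituting this gives $\cov{B_{-a}}{B_{t+b}-B_t}=\frac12\bigl(v(t+a+b)\cdots\bigr)$-type terms. The paper's formula for $V_{a,b}$ states its combination of $v$'s without this factor $\frac12$ and with its sign and argument conventions, so I would fix those conventions by direct expansion and keep $V$ defined through the covariance, which is the form the lemma actually uses. No step here is a real obstacle; the computation is purely bookkeeping.
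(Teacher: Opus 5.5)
Your proof is correct. It is the computation the paper intends: the paper leaves this lemma to the reader, noting only that linear combinations of the Gaussian vector $(B_{t_1},\dots,B_{t_n})$ are Gaussian, and your reading $\theta_tB_x=B_{x+t}-B_t$ and your identification $\cov{B_{t_j}}{B_{t+t_k}-B_t}=V_{-t_j,t_k}(t)$ are exactly what is needed. Your side remark is also right: by polarization $V_{a,b}(t)=\frac12\bigl(v(t+a)+v(t+b)-v(t+a+b)-v(t)\bigr)$, so the paper's $v$-form is off only by the factor $\frac12$ (its signs and arguments are otherwise correct), and this does not affect the lemma.
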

	The two preceding lemmas makes it possible to prove the characterization of mixing Gaussian process with stationary increments.  
	
	\begin{proof}[Proof of Theorem \ref{thm:Maruyama_stationary_increments} (mixing)]
		With Lemma \ref{lem:ergodic_characterization} and Lemma \ref{lem:gaussian_law}, $B$ is mixing if and only if
		\begin{equation*}
			\lim_{\|t\|\to +\infty}\Sigma_{f,g}(t) = 0.
		\end{equation*}
		where $\Sigma_{f,g}$ is defined as in Lemma \ref{lem:gaussian_law}. Equivalently, this holds if and only if
		\begin{equation*}
			\lim_{\|t\|\to +\infty}V_{a,b}(t)=0, \quad a,b\in \R^{d}.
		\end{equation*}
	\end{proof}
	
	As for the characterization of ergodic and weakly-mixing weakly stationary Gaussian processes, we follow the ideas developed in \cite{Rosinski1997}. Note that equation \eqref{eq:levy_measure_covariance} yields
	\begin{equation*}
		V_{a,b}(t)=\int_{\R}e^{-i(t,x)}(1-e^{-i(x,a)})(1-e^{-i(x,b)})\, \mu(\mathrm{d}x).
	\end{equation*}
	Consequently,
	\begin{equation*}
		V_{a,b}=\widehat{\gamma}_{a,b}.
	\end{equation*}
	where
	\begin{equation*}
		\gamma_{a,b}(\mathrm{d}x) = (1-e^{-i(x,a)})(1-e^{-i(x,b)}) \, \mu(\mathrm{d}t)
	\end{equation*}
	is a finite signed measure according to \eqref{eq:levy_measure}. Here $\hat{.}$ refers to the standard Fourier transform for signed measures. Plugging the preceding expression in Lemma \ref{lem:gaussian_law} yields
	\begin{equation*}
		\Sigma_{f,g}=\widehat{\gamma}
	\end{equation*}
	where 
	\begin{equation}\label{eq:gamma}
		\gamma=\sum_{1 \leq j,k \leq n}a_{j}a_{k}\gamma_{t_{j},-t_{k}}.
	\end{equation}
	Now, the space of finite signed measures on $\R^{d}$ is a Banach algebra equipped with the convolution operator of measures $\gamma_{1}\gamma_{2}$ defined by
	\begin{equation*}
		\gamma_{1}\gamma_{2}(A)=\int_{\R^{d}\times \R^{d}}\mathds{1}(s+t \in A)\, \gamma_{1}(\mathrm{d}s)\,\gamma_{2}(\mathrm{d}t)
	\end{equation*} 
	and the norm of total variation. Hence, equation \ref{eq:gamma} yields 
	\begin{equation}\label{eq:fourier}
		\begin{aligned}
			\exp{\Sigma_{f,g}}&=\sum_{n=0}^{+\infty}\frac{\hat{\gamma}^{n}}{n!}\\
			&=\widehat{e^{\gamma}}.
		\end{aligned}
	\end{equation}
	
	This latter relation is fundamental and we will exploit it to prove the ergodic and weakly-mixing characterization of weakly-stationary Gaussian processes. This will require a last, but well-known lemma.
	
	\begin{lemma}[{{\cite[Theorem 3.2.3]{Lukacs1970}}}]\label{lem:Cesaro_convergence}
		Let $\nu$ be a signed measure in $\R^{d}$, then
		\begin{equation*}
			\lim_{\|t\|\to+\infty}\frac{1}{\Vol{\mathsf{B}_{r}}}\int_{\mathsf{B}_{r}}\widehat{\nu}(s) \, \mathrm{d}s = \nu(\{0\}).
		\end{equation*}
	\end{lemma}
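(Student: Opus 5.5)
The plan is to swap the Cesàro average with the integral against $\nu$, which turns the statement into a pointwise limit of an averaging kernel followed by dominated convergence. Here $\nu$ is a finite signed measure (as in the application to $\gamma$ and $e^{\gamma}$), and the limit is taken as $r\to+\infty$.

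\emph{Step 1: Fubini.} I would write
\begin{equation*}
\frac{1}{\Vol{\mathsf{B}_{r}}}\int_{\mathsf{B}_{r}}\widehat{\nu}(s)\,\mathrm{d}s=\int_{\R^{d}}K_{r}(x)\,\nu(\mathrm{d}x),
\qquad
K_{r}(x)=\frac{1}{\Vol{\mathsf{B}_{r}}}\int_{\mathsf{B}_{r}}e^{-i(s,x)}\,\mathrm{d}s.
\end{equation*}
The interchange of integrals is legitimate because $|e^{-i(s,x)}|=1$. Hence the integrand is integrable against the finite product measure $\mathds{1}_{\mathsf{B}_{r}}(s)\,\mathrm{d}s\otimes|\nu|$, where $|\nu|$ is the total variation measure.

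\emph{Step 2: the kernel.} The kernel satisfies $|K_{r}(x)|\leq 1$ for all $x$ and $r$, and $K_{r}(0)=1$. The substitution $s=ru$ gives the scaling relation
\begin{equation*}
K_{r}(x)=K_{1}(rx)=\frac{1}{\Vol{\mathsf{B}_{1}}}\widehat{\mathds{1}_{\mathsf{B}_{1}}}(rx).
\end{equation*}
Since $\mathds{1}_{\mathsf{B}_{1}}\in L^{1}(\R^{d})$, the Riemann–Lebesgue lemma gives $\widehat{\mathds{1}_{\mathsf{B}_{1}}}(y)\to 0$ as $\|y\|\to+\infty$. Therefore, for every fixed $x\neq 0$, $K_{r}(x)\to 0$ as $r\to+\infty$, because $\|rx\|\to+\infty$. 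If one prefers an explicit rate, $K_{1}$ can be written through a Bessel function of order $d/2$, but this is not needed.

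\emph{Step 3: dominated convergence.} The functions $K_{r}$ converge pointwise to $\mathds{1}_{\{0\}}$ and are bounded by $1$, which is integrable against the finite measure $|\nu|$. Applying dominated convergence to the positive and negative parts of $\nu$ separately yields
\begin{equation*}
\int_{\R^{d}}K_{r}(x)\,\nu(\mathrm{d}x)\longrightarrow \int_{\R^{d}}\mathds{1}_{\{0\}}(x)\,\nu(\mathrm{d}x)=\nu(\{0\}),
\end{equation*}
which is the claim.

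I do not expect a genuine obstacle. The only point needing care is the decay of $K_{1}$ at infinity, which Riemann–Lebesgue provides. Uniformity of that decay is not needed, since dominated convergence only uses pointwise convergence together with the uniform bound $|K_{r}|\leq 1$.
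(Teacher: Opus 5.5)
Your proof is correct. The paper gives no proof of its own, only the citation to Lukacs, and your Fubini plus dominated-convergence argument is the classical one behind that result: in dimension one your kernel is $K_{r}(x)=\sin{rx}/(rx)$, and your scaling identity $K_{r}(x)=K_{1}(rx)$ combined with Riemann--Lebesgue is the natural extension to ball averages in $\R^{d}$. You also rightly read the limit as $r\to+\infty$ and $\nu$ as finite.

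One small adjustment: in the application, $\gamma$ and $e^{\gamma}$ are finite \emph{complex} measures, so splitting $\nu$ into positive and negative parts is not enough. Dominate by $|\nu|$ directly instead, writing $\nu(\mathrm{d}x)=\rho(x)\,|\nu|(\mathrm{d}x)$ with $|\rho|=1$, and the argument goes through verbatim.
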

	
	\begin{proof}[Proof of Theorem \ref{thm:Maruyama_stationary_increments} (ergodic)]
		With equation \eqref{eq:fourier}, Lemmas \ref{lem:ergodic_characterization} and \ref{lem:gaussian_law}, $B$ is ergodic if and only if 
		\begin{equation*}
			\lim_{r\to +\infty}\frac{1}{\Vol{\mathsf{B}_{r}}}\int_{\mathsf{B}_{r}}\widehat{e^{\gamma}}(t)\, \mathrm{d}t = 1
		\end{equation*}
		for any $\gamma$ defined as in \eqref{eq:gamma}.\medbreak
		
		With Lemma \ref{lem:Cesaro_convergence}, this condition is equivalent to
		\begin{equation}\label{eq:ergodic_condition}
			e^{\gamma}(\{0\})=1.
		\end{equation}
		Assume first that $\mu$ is non-atomic, then so is $\gamma^{n}$ for any $n \geq 1$. Hence, the only atom of
		\begin{equation*}
			e^{\gamma}=\sum_{n=0}^{+\infty}\frac{\gamma^{n}}{n!}
		\end{equation*}
		comes from the initial term $\gamma^{0}=\delta_{0}$. As such, $e^{\gamma}(\{0\})=1$ and $B$ is ergodic.\medbreak
		
		On the other hand, if $\mu$ has an atom, say at position $t$. As $\mu$ is symmetric, it has an atom at $-t$, whence $\mu^{2}(\{0\})\geq \mu(\{t\})^{2}$. If $\gamma(\mathrm{d}x)=|1-e^{iax}|^{2} \, \mu(\mathrm{d}x)$ with $e^{iat}\neq 1$, then $\gamma^{2}$ has an atom at $0$ and
		\begin{equation*}
			e^{\gamma}(\{0\}) \geq 1+\frac{\gamma^{2}(\{0\})}{2} > 1.
		\end{equation*}
		and $B$ is not ergodic.
	\end{proof}
	
	It remains to prove the equivalence between ergodicity and weakly-mixing in Theorem \ref{thm:Maruyama_stationary_increments}. This is done below.
	
	\begin{proof}[Proof of Theorem \ref{thm:Maruyama_stationary_increments} (weakly-mixing)]
		We will prove that $B$ is weakly-mixing if and only if it is ergodic. Assume $B$ is ergodic. Similarly to the ergodic case, $B$ is weakly-mixing if and only if 
		\begin{equation*}
			\lim_{r \to +\infty}\frac{1}{\Vol{\mathsf{B}_{r}}}\int_{\mathsf{B}_{r}}(1-e^{\widehat{\gamma}(t)})^{2}\, \mathrm{d}t = 0.
		\end{equation*}
		Expanding the inner integrand, this is equivalent to 
		\begin{equation*}
			\lim_{r\to +\infty}\frac{1}{\Vol{\mathsf{B}_{r}}}\int_{\mathsf{B}_{r}}1-2\widehat{e^{\gamma}}(t)+\widehat{e^{2\gamma}}(t)\, \mathrm{d}s = 0/
		\end{equation*}
		With Lemma \ref{lem:Cesaro_convergence}, the preceding condition translates into
		\begin{equation*}
			e^{2\gamma}(\{0\})-2e^{\gamma}(\{0\})+1=0.
		\end{equation*}
		As $B$ is ergodic, $\mu$ is non-atomic and mimicking the proof of the ergodic case,
		\begin{equation*}
			e^{2\gamma}(\{0\})-2e^{\gamma}(\{0\})+1=1-2+1=0.
		\end{equation*}
	\end{proof}
	
	We will finish the proof of Theorem \ref{thm:Maruyama_stationary_increments_1} if we show that the absolute continuity of $\mu$ implies that $B$ is mixing.
	
	\begin{proof}[Proof (absolute continuity implies mixing)]
		Fix $a,b \in \R^{d}$. With Levy-Kintchine formula \eqref{eq:levy_measure_covariance},
		\begin{equation*}
			V_{a,b}(t)=\int_{\R^{d}}e^{-i(t,x)} (1-e^{-i(a,x)})(1-e^{-i(b,x)})f(x) \, \mathrm{d}x
		\end{equation*}
		where $f$ is the density of $\mu$. In virtue of \eqref{eq:levy_measure}, $f$ satisfies
		\begin{equation*}
			\int_{\R^{d}}(1 \wedge \|x\|^{2})f(x) \, \mathrm{d}x<+\infty.
		\end{equation*}
		
		Note that $e^{i(x,t)} = 1+i(x,t)+o(\|x\|)$ in a neighborhood of $0$. Hence, the function
		\begin{equation*}
			\frac{(1-e^{-i(x,a)})(1-e^{-i(x,b)})}{\|x\|^{2}}
		\end{equation*}
		extends into a continuous function on $[-1,1]^{d}$, and as a consequence
		\begin{equation*}
			(1-e^{-i(a,x)})(1-e^{-i(b,x)})f(x)=g(x)\|x\|^{2}f(x)
		\end{equation*}
		is integrable and continuous on $[-1,1]^{d}$. By the Riemann-Lebesgue lemma,
		\begin{equation*}
			V_{a,b}^{(1)}(t)=\int_{[-1,1]^{d}}e^{-i(t,x)} (1-e^{-i(a,x)})(1-e^{-i(b,x)})f(x)\, \mathrm{d}x \underset{\|t\|\to 0}{\longrightarrow} 0.
		\end{equation*}
		Similarly, on $\R^{d}\backslash [-1,1]^{d}$, as
		\begin{equation*}
			(1-e^{-i(a,x)})(1-e^{-i(b,x)})
		\end{equation*} 
		is bounded. Since $f$ is integrable on $\R \backslash [-1,1]^{d}$, a similar reasoning yields
		\begin{equation*}
			V^{(2)}_{a,b}(t)=\int_{\R \backslash [-1,1]^{d}} e^{-i(x,t)}(1-e^{-i(a,x)})(1-e^{-i(b,x)}) f(x)\, \mathrm{d}s \underset{\|t\|\to +\infty}{\longrightarrow} 0.
		\end{equation*}
		All in all, 
		\begin{equation*}
			\lim_{\|t\|\to +\infty}V_{a,b}(t)=\lim_{\|t\|\to +\infty}V_{a,b}^{(1)}(t)+V_{a,b}^{(2)}(t)=0
		\end{equation*}
		and $B$ is mixing.
	\end{proof}
	
	\subsection{$\R^{d}$-valued Gaussian process with stationary increments}
	
	This section is devoted to the proof of Theorem \ref{thm:Maruyama_stationary_increments}, which generalizes Theorem \ref{thm:Maruyama_stationary_increments_1} for $\R^{d}$-valued Gaussian processes with stationary increments. Here, $\smash{B=(B^{(1)},\dots, B^{(d)})}$ refers to a $\R^{d}$-valued Gaussian process with stationary increments and independent coordinates. As before, we denote by
	\begin{itemize}
		\item $\Sigma_{s,t} = \cov{B_{s}}{B_{t}}$ the covariance function,
		\item $\Sigma_{t} =  \operatorname{diag}(v_{1}(t), \dots, v_{d}(t))$ the variogram of $B$, with $v_{i}$ variogram of $B^{(i)}$,
		\item $\mu=(\mu_{1}, \dots, \mu_{d})$ the family of Levy measures of $B$, with $\mu_{i}$ the Levy measure of $B^{(i)}$. 
	\end{itemize}
	
	We will prove the following result, which in combination of Theorem \ref{thm:Maruyama_stationary_increments_1}, will immediately imply Theorem \ref{thm:Maruyama_stationary_increments} .
	
	\begin{proposition}
		Let $B$ be a $\R^{d}$-valued Gaussian process with stationary increments. Then,
		\begin{itemize}
			\item $B$ is ergodic if and only if $B^{(i)}$ is ergodic, $1\leq i \leq d$,
			\item $B$ is weakly-mixing if and only if $B^{(i)}$ is weakly-mixing, $1\leq i \leq d$,
			\item  $B$ is mixing if and only if $B^{(i)}$ is mixing, $1\leq i \leq d$,
		\end{itemize}
	\end{proposition}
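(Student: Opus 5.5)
The proof goes through the same characterization used for real-valued processes. The analogue of Lemma \ref{lem:ergodic_characterization} holds for $B$ with test functions
\begin{equation*}
f(B)=\exp{i\sum_{l=1}^{d}\sum_{j=1}^{n}a^{(l)}_{j}B^{(l)}_{t_{j}}},
\end{equation*}
since linear combinations of such exponentials are dense in $\L^{2}(\P)$ of the $\sigma$-algebra generated by $B$. Because the coordinates are independent Gaussian processes, such an $f$ factorizes as $f(B)=\prod_{l=1}^{d}f_{l}(B^{(l)})$ with independent factors, and similarly for $g$. Therefore
\begin{equation*}
\esp{f(B)\overline{g(\theta_{t}B)}}=\prod_{l=1}^{d}\esp{f_{l}(B^{(l)})\overline{g_{l}(\theta_{t}B^{(l)})}},\qquad \esp{f(B)}\overline{\esp{g(B)}}=\prod_{l=1}^{d}\esp{f_{l}(B^{(l)})}\overline{\esp{g_{l}(B^{(l)})}}.
\end{equation*}
This reduces each statement to the behaviour of products of correlation functions $c_{l}(t)$ and their limits $m_{l}$.

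The \emph{only if} directions are immediate: take $f,g$ depending on a single coordinate $B^{(l)}$. Then any invariant event, mixing relation or weak-mixing relation for $B^{(l)}$ is one for $B$, because $\theta_{t}$ acts coordinatewise.

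For \emph{mixing}, the \emph{if} direction follows since $c_{l}(t)\to m_{l}$ for each $l$ gives $\prod_l c_l(t)\to\prod_l m_l$.

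For \emph{weak mixing}, I write
\begin{equation*}
\prod_{l} c_{l}-\prod_{l} m_{l}=\sum_{l}\Big(\prod_{k<l}c_{k}\Big)(c_{l}-m_{l})\Big(\prod_{k>l}m_{k}\Big).
\end{equation*}
All factors are bounded by $1$ in modulus, so $\big|\prod_l c_l-\prod_l m_l\big|^{2}\le d\sum_{l}|c_{l}-m_{l}|^{2}$. Averaging over $\mathsf{B}_{r}$ then gives weak mixing of $B$ from weak mixing of every coordinate.

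Ergodicity is the main obstacle, because Cesàro convergence of each $c_{l}$ separately does not imply Cesàro convergence of their product; this is the classical fact that a product of ergodic systems need not be ergodic. I see two routes:
\begin{itemize}
\item By Theorem \ref{thm:Maruyama_stationary_increments_1}, ergodicity and weak mixing coincide for each real coordinate. So if each $B^{(l)}$ is ergodic, each is weakly mixing, hence $B$ is weakly mixing by the previous step, hence ergodic.
\item Alternatively, a weakly mixing system times an ergodic one is ergodic \cite[Theorem 2.36]{Einsiedler2011}, and induction over the coordinates gives the same conclusion.
\end{itemize}
The converse direction is again the single-coordinate restriction. The one point I would check carefully is that the action $\theta_{t}$ on $B$ is exactly the product of the coordinate actions, so that these product results apply. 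This holds because $\theta_{t}$ acts componentwise on the coordinates.
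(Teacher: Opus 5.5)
Your proof is correct and follows essentially the same route as the paper: the only-if directions by restriction to a single coordinate, the mixing and weak-mixing cases from the product structure of independent coordinates, and the ergodic case by upgrading each ergodic coordinate to weak mixing via Theorem \ref{thm:Maruyama_stationary_increments_1} (your first route). The one difference is that you verify the product step by hand, with factorized exponential test functions and the telescoping bound, whereas the paper simply cites \cite[Corollary 2.37]{Einsiedler2011}. Your second route is not really independent of the first: since a product of merely ergodic systems need not be ergodic, it still needs Theorem \ref{thm:Maruyama_stationary_increments_1} to make all but one coordinate weakly mixing.
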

	
	\begin{proof}
		It is clear that if $B$ is ergodic, \textit{resp.} weakly mixing or mixing, then each coordinate $B^{(i)}$ is also ergodic, \textit{resp.} weakly mixing or mixing.\medbreak
		
		For the converse, we start with the mixing case. If each $B^{(i)}$ is mixing, then $B$ is mixing as product of independent mixing random variables.\medbreak
		
		If each $B^{(i)}$ is weakly-mixing, then with \cite[Corollary 2.37]{Einsiedler2011}, $B$ is weakly-mixing as product of weakly-mixing random variables.\medbreak
		
		Finally, if $B^{(i)}$ is ergodic, then it is also weakly-mixing according to Theorem \ref{thm:Maruyama_stationary_increments_1} and we conclude with the above case.
	\end{proof}
	
	This concludes our brief digression on ergodic Gaussian processes with stationary increments. Before closing this section, let us note that all of the preceding results are expected to extend to infinitely divisible processes with stationary increments, albeit at the cost of more involved arguments. In his seminal paper \cite{Maruyama1970}, Maruyama initiated the study of the ergodic and mixing properties of stationary infinitely divisible processes. Several of his arguments should be adaptable to the present setting of infinitely divisible processes with stationary increments. In particular, one may reasonably conjecture that any ergodic infinitely divisible process with stationary increments is weakly mixing, in analogy with the stationary case, see \cite{Rosinski1997}.

	\clearpage
	
	\bibliographystyle{alpha}
	\bibliography{Reference.bib}

\begin{thebibliography}{MVN68}

\bibitem[BH23]{Bjoerklund2023}
Michael Bj{\"o}rklund and Tobias Hartnick.
\newblock Hyperuniformity and non-hyperuniformity of quasicrystals.
\newblock {\em Mathematische Annalen}, 389(1):365--426, June 2023.

\bibitem[Ble86]{Bleistein1986}
Norman Bleistein.
\newblock {\em Asymptotic expansions of integrals}.
\newblock Dover books on mathematics. Dover Publications, New York, 1986.

\bibitem[Cos21]{Coste2021}
Simon Coste.
\newblock Order, fluctuations, rigidities.
\newblock Technical report, 2021.

\bibitem[Die04]{Diecker2004}
Antonius Diecker.
\newblock {\em Simulation of Fractional Brownian Motion}.
\newblock PhD thesis, Vrije Universiteit Amsterdam, 2004.

\bibitem[DN97]{Dietrich1997}
Claude Dietrich and Garry Newsam.
\newblock Fast and exact simulation of stationary gaussian processes through
  circulant embedding of the covariance matrix.
\newblock {\em SIAM Journal on Scientific Computing}, 18(4):1088--1107, July
  1997.

\bibitem[DVJ08]{Daley2008}
Daryl~J. Daley and David Vere-Jones.
\newblock {\em An Introduction to the Theory of Point Processes - General
  Theory and Structure}.
\newblock Probability and its Applications. Springer New York, 2nd edition,
  2008.

\bibitem[EW11]{Einsiedler2011}
Manfred Einsiedler and Thomas Ward.
\newblock {\em Ergodic Theory: with a view towards Number Theory}.
\newblock Graduate Texts in Mathematics. Springer London, 2011.

\bibitem[GK21]{Ghosh2021}
Subhroshekhar Ghosh and Manjunath Krishnapur.
\newblock Rigidity hierarchy in random point fields: Random polynomials and
  determinantal processes.
\newblock {\em Communications in Mathematical Physics}, 388(3):1205--1234,
  November 2021.

\bibitem[GL16]{Ghosh2016}
Subhro Ghosh and Joel Lebowitz.
\newblock Number rigidity in superhomogeneous random point fields.
\newblock {\em Journal of Statistical Physics}, 166(3–4):1016--1027, October
  2016.

\bibitem[GP17]{Ghosh2017}
Subhroshekhar Ghosh and Yuval Peres.
\newblock Rigidity and tolerance in point processes: Gaussian zeros and ginibre
  eigenvalues.
\newblock {\em Duke Mathematical Journal}, 166(10), July 2017.

\bibitem[HS13]{Holroyd2013}
Alexander Holroyd and Terry Soo.
\newblock Insertion and deletion tolerance of point processes.
\newblock {\em Electronic Journal of Probability}, 18, January 2013.

\bibitem[Kal17]{Kallenberg2017}
Olav Kallenberg.
\newblock {\em Random Measures, Theory and Applications}.
\newblock Probability Theory and Stochastic Modelling. Springer International
  Publishing, 2017.

\bibitem[LP17]{Last2017}
G{\"u}nter Last and Mathew Penrose.
\newblock {\em Lectures on the Poisson Process}.
\newblock Institute of Mathematical Statistics Textbooks. Cambridge University
  Press, October 2017.

\bibitem[LR25a]{LachiezeRey2025a}
Raphaël Lachièze-Rey.
\newblock Hyperuniform random measures, transport and rigidity, 2025.

\bibitem[LR25b]{LachiezeRey2025}
Raphaël Lachièze-Rey.
\newblock Rigidity of random stationary measures and applications to point
  processes, 2025.

\bibitem[Luk70]{Lukacs1970}
Eugene Lukacs.
\newblock {\em Characteristic functions}.
\newblock Griffin books of cognate interest. Griffin, London, 2nd edition,
  1970.

\bibitem[Mar49]{Maruyama1949}
Gisiro Maruyama.
\newblock The harmonic analysis of stationary stochastic process.
\newblock {\em Memoirs of the Faculty of Science, Kyushu University. Series A,
  Mathematics}, 4(1):45--106, July 1949.

\bibitem[Mar70]{Maruyama1970}
Gisiro Maruyama.
\newblock Infinitely divisible processes.
\newblock {\em Theory of Probability and Its Applications}, 15(1):1--22, 1970.

\bibitem[MVN68]{Mandelbrot1968}
Benoit~B. Mandelbrot and John~W. Van~Ness.
\newblock Fractional brownian motions, fractional noises and applications.
\newblock {\em SIAM Review}, 10(4):422--437, October 1968.

\bibitem[RZ97]{Rosinski1997}
Jan Rosinski and Tomasz Zak.
\newblock The equivalence of ergodicity and weak mixing for infinitely
  divisible processes.
\newblock {\em Journal of Theoretical Probability}, 10(1):73--86, 1997.

\bibitem[SSV09]{Schilling2009}
René~L. Schilling, Renming Song, and Zoran Vondraček.
\newblock {\em Bernstein Functions: Theory and Applications}.
\newblock Studies in Mathematics. Walter de Gruyter, 2nd edition, December
  2009.

\bibitem[Tho25]{Thomassey2025}
Loïc Thomassey.
\newblock Perturbed palm measures.
\newblock August 2025.

\bibitem[Tor18]{Torquato2018}
Salvatore Torquato.
\newblock Hyperuniform states of matter.
\newblock {\em Physics Reports}, 745:1--95, June 2018.

\bibitem[TS03]{Torquato2003}
Salvatore Torquato and Frank~H Stillinger.
\newblock Local density fluctuations, hyperuniformity, and order metrics.
\newblock {\em Physical Review E}, 68(4), October 2003.

\bibitem[Zol01]{Zolotarev2001}
Vladimir~M. Zolotarev.
\newblock {\em One-dimensional stable distributions}.
\newblock Number~65. American Mathematical Society, 2001.

\end{thebibliography}
	
\end{document}